\DeclareMathOperator*{\argmin}{arg\,min}
\DeclareMathOperator*{\argmax}{arg\,max}
\newcommand\numberthis{\addtocounter{equation}{1}\tag{\theequation}}
\theoremstyle{plain}
\newtheorem{theorem}{Theorem}[section]
\newtheorem{lemma}[theorem]{Lemma}
\theoremstyle{definition}
\newtheorem{definition}[theorem]{Definition}
\newtheorem{assumption}[theorem]{Assumption}
\theoremstyle{remark}
\newtheorem{remark}[theorem]{Remark}
\icmltitlerunning{On the Sharp Input-Output Analysis of Nonlinear Systems under Adversarial Attacks}
\begin{document}

\twocolumn[
  \icmltitle{On the Sharp Input-Output Analysis of Nonlinear Systems \\ under Adversarial Attacks}



  \icmlsetsymbol{equal}{*}

  \begin{icmlauthorlist}
    \icmlauthor{Jihun Kim}{yyy}
    \icmlauthor{Yuchen Fang}{comp}
    \icmlauthor{Javad Lavaei}{yyy}
  \end{icmlauthorlist}

  \icmlaffiliation{yyy}{Department of Industrial Engineering and Operations Research, University of California, Berkeley, CA 94720, United States}
  \icmlaffiliation{comp}{Department of Mathematics, University of California, Berkeley, CA 94720, United States}

  \icmlcorrespondingauthor{Jihun Kim}{jihun.kim@berkeley.edu}

  \icmlkeywords{Nonlinear System Identification, Input-Output Analysis, Probabilistic Adversarial Attacks}

  \vskip 0.3in
]



\printAffiliationsAndNotice{}  

\begin{abstract}
  This paper is concerned with learning the input-output mapping of general nonlinear dynamical systems. While the existing literature focuses on Gaussian inputs and benign disturbances, we significantly broaden the scope of admissible control inputs and allow correlated, nonzero-mean, adversarial disturbances. With our reformulation as a linear combination of basis functions, we prove that the $\ell_2$-norm estimator overcomes the challenges posed by an adversary with access to the full information history, provided that the attack times are sparse, \textit{i.e.}, the probability that the system is under adversarial attack at a given time is smaller than a certain threshold. We provide an estimation error bound that decays with the input memory length and prove its optimality by constructing a problem instance that suffers from the same bound under probabilistic adversarial attacks. Our work provides a sharp input-output analysis for a generic nonlinear and partially observed system under significantly generalized assumptions compared to existing works.
\end{abstract}

\section{Introduction}
Dynamical systems describe how the state of a system evolves over time according to specific laws. Such systems are ubiquitous in scientific and engineering disciplines, including computer networks \citep{low2002internet}, deep learning \citep{meunier2022dynamical}, portfolio management \citep{grinold2000active}, biology \citep{murray2007mathematical}, and optimal control \citep{bishop2011modern}. In many practical settings, however, the underlying dynamics is too complex to be explicitly characterized, resulting in models with partially or entirely unknown parameters. Designing controllers or making predictions without first identifying these unknowns can lead to suboptimal or even unsafe outcomes. To address this challenge, the field of \textit{system identification} focuses on learning system dynamics from observed input-output data.

There has been extensive research in system identification under various structural and disturbance assumptions 
\citep{simchowitz2018learning, faradonbeh2018finite, simchowitz2019semi, jedra2020finite, sarkar2021finite, oymak2022revisit, bakshi2023new, yalcin2024exact, zhang2024exact, kim2024prevailing, kim2025system} While these works provide strong theoretical guarantees and practical algorithms, the majority of them concentrate on linear systems. However, many real-world systems are inherently nonlinear \citep{grinold2000active,low2002internet,murray2007mathematical}, which motivates us to develop identification methods that go beyond the linear setting. 

We consider a generic partially observed nonlinear system
\begin{align}\label{model}
    \nonumber x_{t+1} &= f(x_t, u_t, w_t), \\ y_t &= g(x_t, u_t), \quad\quad t=0, 1,\dots, T-1,
\end{align}
where $x_t \in \mathbb{R}^n$ is the state, $u_t \in \mathcal{U}\subset \mathbb{R}^m$ is the control input, $w_t \in \mathbb{R}^d$ is the adversarial disturbance, and $y_t \in \mathbb{R}^r$ is the observation at time $t$. The set $\mathcal{U}$ consists of admissible control inputs, and $T$ is the time horizon. The states evolve according to $f$, and (partial) observations from states are obtained from the states via $g$.

Our goal is to identify the input-output mapping of the system \eqref{model} from the collected data $\{u_t, y_t\}_{t=0}^{T-1}$. To be specific, given an input memory length $\tau > 0$, we study the mapping from the recent input sequence $(u_t, \dots, u_{t-\tau})$ to the observation $y_t$. However, an emerging challenge lies in the adversarial nature of $w_t$. If $w_t$ exploits the full information history at all times $t$, the adversary effectively induces ``a new normal'', precluding accurate identification of the true system. Thus, certain constraints on adversaries are inevitable; \textit{e.g.}, \citet{simchowitz2019semi} limited the information available to the adversary, in which $w_t$ is designed based only on the restricted information history $\bm{\sigma}\{x_0, \dots, x_{t-\tau}\}$. Our approach instead limits attacks to occur \textit{probabilistically} at each time, resulting in sparse attack times, while allowing the adversary to design attacks, when they occur, based on the full information history $\bm{\sigma}\{x_0, \dots, x_t\}$. We term this \textit{probabilistic adversarial attack}, formalized in Section \ref{sec:2}.

\begin{figure}[t]
    \centering
    \begin{subfigure}[b]{0.49\textwidth}
        \centering
        \includegraphics[height=140pt]{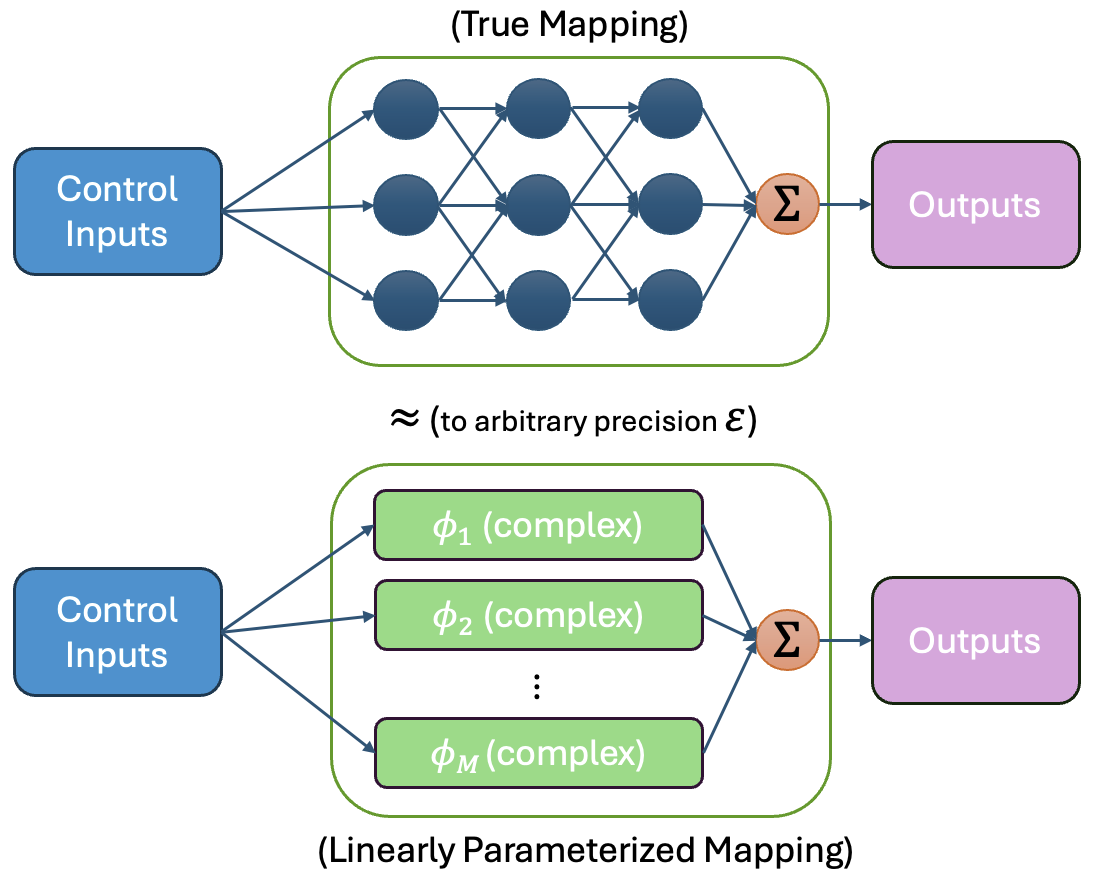}
        \caption{Approximation of the true input-output mapping via a linearly parameterized mapping}
        \label{fig:approx}
    \end{subfigure}
    \hfill
    \begin{subfigure}[b]{0.49\textwidth}
        \centering
        \includegraphics[height=133pt]{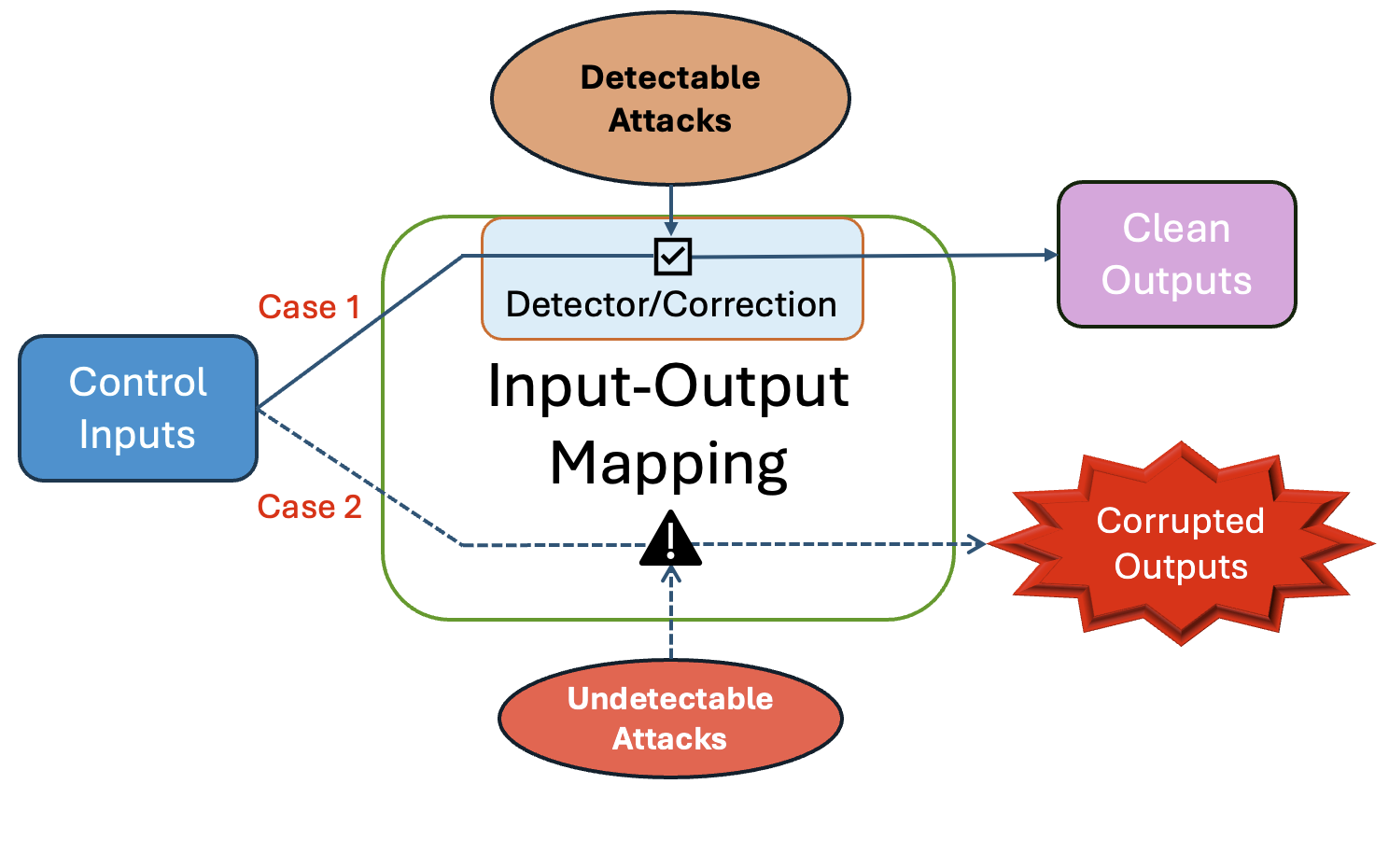}
        \caption{Impact of detectable attacks versus occasional undetectable attacks on outputs}
        \label{fig:attack}
    \end{subfigure}
    
    \caption{\small Input-output analysis of linearly parameterized mappings under clean and corrupted outputs. (a) We reformulate the input-output behavior of nonlinear systems as a linearly parameterized nonlinear system, which can be approximated to arbitrary precision $\epsilon$, given a sufficiently expressive set of basis functions $\phi_1,\dots, \phi_M$. (b) We assume that most attacks are detectable and produce clean outputs, whereas undetectable attacks occur infrequently but can have arbitrarily large magnitudes, producing completely corrupted outputs. Our goal is to identify a linearly parameterized input-output mapping from this partially corrupted output trajectory.}
    \label{fig:approxandattacks}
\end{figure}

Meanwhile, we approximate the input-output behavior of the system \eqref{model}—our identification objective—using a finite-memory reformulation, which offers a tractable representation of the system under mild assumptions:
\begin{tcolorbox}[left=2mm,
  right=2mm,
  top=1.5mm,
  bottom=1.5mm,
    colback=blue!5!white,    
    title=Schematic Input-Output Behavior 
    ]
    \vspace{-1.5em}
    \begin{align}\label{temp}
    \nonumber    y_t = G^* \cdot \Phi(&u_t, \dots, u_{t-\tau}) + \textit{residual terms} \\
        &+ \textit{approximation error vector},
    \end{align}
\end{tcolorbox}
where $M$ is the number of basis functions, $\Phi: (\mathcal{U})^{\tau+1} \to \mathbb{R}^M$ is the stack of basis functions, $G^*\in \mathbb{R}^{r\times M}$ represents the matrix governing the true input-output mapping, $\tau>0$ is input memory length, and the \textit{residual terms} are functions of disturbances $w_{t-1}, \dots, w_{t-\tau}$ and ``far'' past states $x_{t-\tau}$. Note that the far past states $x_{t-\tau}$ are exponentially small with the exponent $\tau$  under stability conditions. We will formalize this schematic form in Section \ref{sec:2}.

In particular, given that the basis functions $\Phi(\cdot)$ are sufficiently expressive, it is guaranteed from the function approximation theory that a wide class of nonlinear mappings can be approximated to arbitrary precision, up to an \textit{approximation error vector} of an arbitrarily small norm. The foundational Stone-Weierstrass theorem guarantees that any continuous function can be uniformly approximated to arbitrary precision by polynomials \cite{stone1948generalized}. Subsequently, this universal approximation property has also been shown to hold for a finite set of appropriately chosen basis functions such as radial basis functions (RBF) \citep{park1991universal, chen1991orthogonal}, Volterra kernels \citep{boyd1985fading}, and random feature models \citep{rahimi2007random}. 
Motivated by this, we focus on analyzing the corresponding linearly parameterized approximation of the input–output mapping (see Figure~\ref{fig:approx} for an overview and \hyperlink{step3target}{\fcolorbox{black}{white}{\textbf{Step 3}}} in Section \ref{sec:2} for the details).

Allowing for a small approximation error, we reduce the system identification task to estimating $G^*$. However, \textit{probabilistic adversarial attack} $w_t$, combined with the partial observability of the nonlinear system, introduce significant challenges to accurately recovering $G^*$. In cyber-physical systems, adversarial disturbances can be categorized as either detectable or undetectable attacks: the former are reliably detected and corrected by a well-designed detector and feedback controller \citep{fawzi2014secure, shoukry2016event, pajic2017attack}, whereas the latter—though injected occasionally—corrupt the outputs and hinder the identification of the mapping $G^*$ (see Figure \ref{fig:attack}). 
Furthermore, it is natural to ask whether restrictions on admissible control inputs $u_t$ may also impede the identification task. 
To this end, we pose the following central question:

\begin{tcolorbox}[left=2mm,
  right=2mm,
  top=1.5mm,
  bottom=1.5mm,
    colback=blue!5!white,    
    title=Main Question 
    ]
    \begin{center}
\textit{When and how can we accurately estimate the true $G^*$}\\
\textit{under nonzero-mean, non-Gaussian inputs and}\\ \textit{correlated, nonzero-mean, adversarial disturbances?}
\end{center}
\end{tcolorbox}


\begin{table}[t]
\centering
\caption{\label{tab:widgets}Comparison of problem settings in existing literature with our work: N/A in Gaussian input means that they consider the system without inputs.}\label{table1}
\resizebox{0.48\textwidth}{!}{%
\begin{tabular}{l|c|c|c|c}

 & Dynamics  & \makecell{Gaussian\\ input} & \makecell{i.i.d. \\Disturbance} & \makecell{Zero-mean \\Disturbance} \\
\hline
\textbf{Our Work} & Nonlinear & No &No& No \\\cite{sarkar2021finite} & Linear  & Yes& Yes& Yes\\
\cite{oymak2022revisit} & Linear  & Yes& Yes& Yes\\
\cite{zhang2024exact} & Nonlinear  & N/A & No& Yes\\
\cite{kim2025system} & Linear  & Yes& No& No
\end{tabular}%
}
\end{table}

In this paper, we address the question posed above and summarize our contributions as follows:

\hspace{2.7mm} 1) Our work focuses on Lipschitz continuous nonlinear systems with partially observed outputs, non-Gaussian control inputs, and correlated, nonzero-mean, possibly adversarial disturbances. This setting significantly broadens the scope of existing literature, each of which assumes at least one of Gaussian control inputs, i.i.d. disturbances, or zero-mean disturbances. 
A detailed comparison of the problem setups is provided in Table~\ref{table1}.

\hspace{2.7mm} 2) We reformulate the problem as estimating $G^* \cdot \Phi(u_t, ..., u_{t-\tau})$, which represents a general form of modeling the system output as a linear combination of basis functions applied to a truncated input history of length 
$\tau$. 
When disturbances are fully adversarial at every time step, the matrix $G^*$ becomes non-identifiable.
Thus, within this framework, we characterize the class of problems for which the true $G^*$ can accurately be identified. In particular, we focus on the problems where the attack probability $p$ at each time (namely, the probability of $w_t$ being nonzero) is restricted to $p< \frac{1}{2\tau}$.

\hspace{2.7mm} 3) We establish that the estimation error of identifying $G^*$ using the $\ell_2$-norm estimator is $O(\rho^\tau)$, where $0<\rho<1$ is the contraction factor of the function $f$. 
Notably, we further provide a matching lower bound of $\Omega(\rho^\tau)$  on the estimation error, showing that the presented bound is indeed optimal. In our experiments, we validate our theoretical findings by evaluating the $\ell_2$-norm estimator across multiple scenarios.

\textbf{Related works. } 
We focus on identifying the input-output mapping of the system, since in many settings it suffices to capture how control actions influence observable outcomes \citep{abbeel2006using, deisenroth2011pilco}. For instance, in model-based reinforcement learning (RL), the agent first learns an input–output model of the environment and subsequently uses it to make informed decisions \citep{moerland2023model}. To ensure tractability of our analysis, we adopt a parameterized system with a finite-memory approximation, which yields interpretable and computationally efficient models—particularly when the chosen function class closely aligns with the true system dynamics \citep{chen1995modeling, giannakis2001bibliography}. The finite-memory approach is consistent with classical nonlinear system identification methods, such as Volterra series truncations \citep{boyd1985fading} and NARMAX models \citep{billings2013nonlinear}.
Further details on related works are in Appendix \ref{Related Work}.

\textbf{Outline. } 
The paper is organized as follows. In Section \ref{sec:2}, we formulate the problem and state the relevant assumptions. In Section \ref{sec:3}, we prove that the $\ell_2$-norm estimator achieves the optimal estimation error and provides the analysis outline. 
In Section \ref{sec:4}, we present numerical experiments to validate our main results.
Finally, concluding remarks are provided in Section \ref{conclusion}.

\textbf{Notation. } 
Let $\mathbb{R}^n$ denote the set of $n$-dimensional vectors and $\mathbb{R}^{n\times n}$ denote the set of $n\times n$ matrices. For a matrix $A$, $\|A\|_F$ denotes the Frobenius norm of the matrix. For a vector $x$, $\|x\|_2$ denotes the $\ell_2$-norm of the vector. 
For a set $S$, the $k$-fold Cartesian product $S\times S\times \cdots \times S$ (with $k$ factors) is denoted by $(S)^{k}$.  For an event $E$, the indicator function
$\mathbb{I}\{E\}$ equals $1$ if $E$ occurs, and $0$ otherwise. $\mathbb{P}(E)$ denotes the probability that the event occurs.  We use $O(\cdot)$ for the big-$O$ notation and $\Omega(\cdot)$ for the big-$\Omega$ notation. Let $I_n$ denote the $n\times n$ identity matrix. The notation $\succeq$ denotes positive semidefiniteness. Let $N(\mu,\Sigma)$ denote the Gaussian distribution with mean $\mu$ and covariance $\Sigma$, and $\text{Unif}[a,b]^n$ denote the uniform distribution on the hypercube $[a,b]^n\subset \mathbb{R}^n$. 
Finally, let $\mathbb{E}$ denote the expectation operator.

\section{Problem Formulation}\label{sec:2}

In \eqref{model}, we study a nonlinear dynamical system $x_{t+1} = f(x_t, u_t, w_t)$ and $y_t = g(x_t, u_t)$, where the state equation is governed by the dynamics $f:\mathbb{R}^{n}\times \mathcal{U} \times \mathbb{R}^{d}\to \mathbb{R}^{n}$ and the observation equation is determined by the measurement model $g:\mathbb{R}^n\times \mathcal{U} \to \mathbb{R}^r$. 
We have the discretion to design control inputs $u_0, u_1, \dots, u_{T-1}$ and we have access to a single observation trajectory consisting of partial observations $y_0, y_1, \dots, y_{T-1}$. 
We assume the Lipschitz continuity of the measurement model $g$ and the contraction property for the dynamics $f$ to ensure system stability and prevent the explosion of the nonlinear system, which is common in control theory literature \citep{tsukamoto2021contraction, lin2023policy}. The formal assumption on the dynamics is given below.

\begin{assumption}[Lipschitz Continuity]\label{lipschitz}

$g$ is Lipschitz continuous; 
\textit{i.e.}, there exists $L> 0$ such that
\begin{align}
 \|g(x,u) - g(\tilde x, \tilde u)\|_2 \leq L(\|x-\tilde x\|_2 + \|u-\tilde u\|_2)
\end{align}
for all $x,\tilde x \in\mathbb{R}^n$, $u,\tilde u\in\mathcal{U}$.
Moreover, note that for $k\geq 1$, the $k$-fold composition of the dynamics $f$, denoted by $f^{(k)}$, maps $(x_{t-k}, u_{t-k}, \dots, u_{t-1}, w_{t-k}, \dots, w_{t-1})$ to $x_t$. We assume that 
$f^{(k)}$ is Lipschitz continuous in its oldest arguments $(x_{t-k}, u_{t-k}, w_{t-k})$ with constant $C\rho^k$ for some $C>0$ and $0<\rho<1$, with later inputs and disturbances fixed. In other words, we have 
\begin{align*}
&\|f^{(k)}(x_{t-k}, u_{t-k}, w_{t-k}; \bm{u}, \bm{w})\\& \hspace{30mm} - f^{(k)}(\tilde x_{t-k}, \tilde u_{t-k}, \tilde w_{t-k}; \bm{u}, \bm{w})\|_2 \\&\leq C\rho^k (\|x_{t-k}-\tilde x_{t-k}\|_2 + \|u_{t-k}-\tilde u_{t-k}\|_2 \\&\hspace{43mm}+ \|w_{t-k}-\tilde w_{t-k}\|_2),\numberthis\label{Crhostable}
\end{align*}
for all $x_{t-k},\tilde x_{t-k} \in\mathbb{R}^n, u_{t-k}, \tilde u_{t-k}\in\mathcal{U}, w_{t-k}, \tilde w_{t-k}\in\mathbb{R}^d$, with any $\bm{u} = (u_{t-k+1}, \dots, u_{t-1})\in(\mathcal{U})^{k-1}$ and $\bm{w} = (w_{t-k+1}, \dots, w_{t-1})\in (\mathbb{R}^d)^{k-1}$. We further make the standard assumption $f(0,0,0) = 0$. 
\end{assumption}

\begin{remark}\label{linearremark}
  The contraction property in Assumption \ref{lipschitz} is analogous to Gelfand's formula in linear systems. For any matrix $A\in\mathbb{R}^{n\times n}$, the formula guarantees the existence of the absolute constant $c(n)$ (which only depends on the system order $n$) such that $\|A^k\|_2\leq c(n) \cdot [\lambda_\text{max}(A)]^{k}$ for all $k\geq 0$, where $\|\cdot \|_2$ denotes the spectral norm and $\lambda_\text{max}(\cdot)$ denotes the spectral radius. We adopt this analogous setting in our nonlinear system by interpreting $\lambda_\text{max}(A)$ as $\rho$, and assume that $f^{(k)}$ has a Lipschitz constant of $C\rho^k$. 
\end{remark}


In this work, we focus on input-output analysis and aim to identify the model governing the mapping from (truncated) control inputs $(u_t, \dots, u_{t-\tau})$ to observation outputs $y_t$, where $\tau$ denotes the input memory length specified by the learner to construct the mapping. As described in the introduction, we will reformulate the true mapping to a linearly parameterized input-output mapping with a finite-memory approximation. 
To this end, we outline the following four steps. 

\fcolorbox{black}{yellow}{\textbf{Step 1}}
By recursively applying the system dynamics  \eqref{model}, the observation $y_t$ can be represented as
\begin{align}\label{expansion}
    \nonumber y_t &= g(x_t, u_t) = g(f(x_{t-1}, u_{t-1}, w_{t-1}), u_t) = \cdots \\\nonumber &= 
    g(f(\cdots f(f(x_{t-\tau}, u_{t-\tau}, w_{t-\tau}), u_{t-\tau+1}, w_{t-\tau+1}), \\&\hspace{42mm}\dots, u_{t-1}, w_{t-1}), u_t)
\end{align}
for all $t\geq \tau$, where the right-hand side considers the control inputs $u_t, u_{t-1}, \dots, u_{t-\tau}$. 

\fcolorbox{black}{yellow}{\textbf{Step 2}}
We next separate the effect of disturbances and the oldest state with the effect of inputs, which allows us to establish the input-output mapping. The result is summarized in the following lemma (see the proof  in Appendix \ref{proof}).

\begin{lemma}\label{separation}
    Under Assumption \ref{lipschitz}, the equation \eqref{expansion} for each $t\geq \tau$ implies that there exist $\bm{W}_{t}^{(\tau)} , \bm{x}_{t}^{(\tau)}\in\mathbb{R}^r$ such that
\begin{align}\label{expansion2}
    \nonumber &y_t=g(f(\cdots f(0, u_{t-\tau}, 0), \cdots, u_{t-1}, 0), u_t)\\ &\hspace{47mm}+
    \bm{W}_{t}^{(\tau)} + \bm{x}_{t}^{(\tau)},
\end{align}
where $\|\bm{W}_{t}^{(\tau)}\|_2\leq  CL   \sum_{k=1}^{\tau}  \rho^k \|w_{t-k}\|_2$ and $\|\bm{x}_{t}^{(\tau)}\|_2 \leq CL  \rho^\tau \|x_{t-\tau}\|_2$. 
\end{lemma}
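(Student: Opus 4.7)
The plan is to isolate the effect of the initial state $x_{t-\tau}$ and of the disturbances $w_{t-\tau}, \dots, w_{t-1}$ on $y_t$ by a telescoping argument that peels off one perturbation at a time. At each step of the telescope, the only quantity that changes is the \emph{oldest} argument of some composition $f^{(k)}$, which is exactly the setting in which Assumption~\ref{lipschitz} supplies the contraction constant $C\rho^k$. A final outer application of the Lipschitz constant $L$ of $g$ then converts each state-level bound into the desired output-level bound.

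Concretely, I would introduce a sequence of hybrid intermediate states $\xi_0, \xi_1, \dots, \xi_{\tau+1}$ at time $t$, where $\xi_0 = x_t$ is produced by the true initial condition and true disturbance history, and $\xi_{\tau+1}$ is produced from $x_{t-\tau} = 0$ and $w_{t-\tau} = \cdots = w_{t-1} = 0$ using the true controls $u_{t-\tau}, \dots, u_{t-1}$, so that $g(\xi_{\tau+1}, u_t)$ coincides with the clean term appearing in~\eqref{expansion2}. For $1 \le k \le \tau+1$, the hybrid $\xi_k$ sets $x_{t-\tau}$ and $w_{t-\tau}, \dots, w_{t-\tau+k-2}$ to zero while keeping $w_{t-\tau+k-1}, \dots, w_{t-1}$ at their true values. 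The key observation is that the state at time $t-\tau+k-1$ is identical under $\xi_k$ and $\xi_{k+1}$, since both zero out everything that precedes $w_{t-\tau+k-1}$; hence the transition $\xi_k \mapsto \xi_{k+1}$ toggles only the oldest disturbance argument fed into $f^{(\tau-k+1)}$, and~\eqref{Crhostable} yields $\|\xi_k - \xi_{k+1}\|_2 \le C\rho^{\tau-k+1}\|w_{t-\tau+k-1}\|_2$. Similarly, $\xi_0 \mapsto \xi_1$ toggles only the oldest state argument of $f^{(\tau)}$, so $\|\xi_0 - \xi_1\|_2 \le C\rho^\tau \|x_{t-\tau}\|_2$.

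Given these state-level bounds, the natural split is to define $\bm{x}_t^{(\tau)} := g(\xi_0, u_t) - g(\xi_1, u_t)$ and $\bm{W}_t^{(\tau)} := g(\xi_1, u_t) - g(\xi_{\tau+1}, u_t)$, so that $y_t - g(\xi_{\tau+1}, u_t) = \bm{x}_t^{(\tau)} + \bm{W}_t^{(\tau)}$ by construction. Applying Lipschitzness of $g$ with $u_t$ held fixed on both sides (so only the state argument contributes) gives $\|\bm{x}_t^{(\tau)}\|_2 \le CL\rho^\tau \|x_{t-\tau}\|_2$ immediately, and the telescope $\|\bm{W}_t^{(\tau)}\|_2 \le L \sum_{k=1}^\tau \|\xi_k - \xi_{k+1}\|_2$ followed by the reindexing $j = \tau - k + 1$ produces $\|\bm{W}_t^{(\tau)}\|_2 \le CL \sum_{j=1}^\tau \rho^j \|w_{t-j}\|_2$, matching the claim.

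The main delicate point will be the bookkeeping: at each telescoping step one has to check that the ``junction'' state fed into $f^{(k)}$ is genuinely identical in the two consecutive hybrids, so that invoking~\eqref{Crhostable} with $\bm{u}$ and $\bm{w}$ held fixed is legitimate and the factor $C\rho^k$ (rather than some cumulative product arising from repeatedly perturbing a non-oldest argument) actually appears. Once this indexing is nailed down, everything else is a routine combination of the two parts of Assumption~\ref{lipschitz}.
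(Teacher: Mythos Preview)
Your proof is correct and follows the same telescoping strategy as the paper: peel off one perturbation at a time, invoke~\eqref{Crhostable} at each step to pick up the factor $C\rho^k$, and finish with the Lipschitz constant $L$ of $g$. The bookkeeping you describe (checking the junction state is identical under consecutive hybrids so that only the \emph{oldest} argument of $f^{(\tau-k+1)}$ changes) is exactly the point, and it goes through as you outline.

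The one genuine difference is how the two pieces $\bm{W}_t^{(\tau)}$ and $\bm{x}_t^{(\tau)}$ are constructed. You define them directly as output differences along the telescope, $\bm{x}_t^{(\tau)} = g(\xi_0,u_t)-g(\xi_1,u_t)$ and $\bm{W}_t^{(\tau)} = g(\xi_1,u_t)-g(\xi_{\tau+1},u_t)$, which immediately gives the claimed bounds. The paper instead bounds the total residual $\bar y_t = y_t - g(\xi_{\tau+1},u_t)$ first and then splits it \emph{a posteriori} by scalar proportions, setting $\bm{W}_t^{(\tau)}$ and $\bm{x}_t^{(\tau)}$ to be the fractions of $\bar y_t$ weighted by $\sum_k \rho^k\|w_{t-k}\|_2$ and $\rho^\tau\|x_{t-\tau}\|_2$ respectively. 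Your construction is cleaner (no proportional split, no degenerate-denominator edge case) and still preserves the property used downstream that $\bm{W}_t^{(\tau)}=0$ whenever $w_{t-1}=\cdots=w_{t-\tau}=0$; the paper's version has the minor advantage that both pieces are parallel to $\bar y_t$, but this is not exploited anywhere.
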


\hypertarget{step3target}{\fcolorbox{black}{yellow}{\textbf{Step 3}}}
For the equation \eqref{expansion2},  note that the term $g\circ f\circ \cdots \circ f$ is a function of $u_{t}, u_{t-1}, \dots, u_{t-\tau}$. We convert this nonlinear function to a linear combination of basis functions taking a truncated number of control inputs, in which we establish a time-invariant system in the sense that the input memory length is fixed. We allow for a universal approximation tolerance $\bar \epsilon\ge0$, such that  $\|\bm{\epsilon}_t(\cdot)\|_2\le \bar \epsilon$:
\begin{align}\label{expansion3}
 \nonumber   g(&f(\cdots f(0, u_{t-\tau}, 0), \cdots, u_{t-1}, 0), u_t) 
   \\& = G^* \cdot [\phi_1(\bm{U}_t^{(\tau)})~  \cdots ~\phi_M(\bm{U}_t^{(\tau)})]^T + \bm{\epsilon}_t(\bm{U}_t^{(\tau)}),
\end{align}
where $\bm{U}_t^{(\tau)} = (u_t, \dots, u_{t-\tau})\in (\mathcal{U})^{\tau+1}$ is the stack of inputs from the time $t-\tau$ to $t$, the basis functions $\phi_i:(\mathbb{R}^{ m})^{\tau+1} \to \mathbb{R}$ are distinct nonlinear mappings for $i=1,\dots,M$, and the matrix $G^*\in\mathbb{R}^{r\times M}$ explains how the nonlinear transformation of the inputs is mapped to the observation outputs. The number of basis functions $M$ and how to design them can be chosen at the discretion of the learner. We note that such a matrix $G^*$ is well-defined (though not necessarily unique) to represent the true input-output mapping within a small  $\bar \epsilon$, given sufficiently expressive basis functions. 

\fcolorbox{black}{yellow}{\textbf{Step 4}}  Let $\Phi(\bm{U}_t^{(\tau)}):= [\phi_1(\bm{U}_t^{(\tau)})~  \cdots ~\phi_M(\bm{U}_t^{(\tau)})]^T$.  Considering the relationships \eqref{expansion2} and \eqref{expansion3}, we finally arrive at the equation
\begin{align}\label{relationshipphi}
    y_t = G^* \cdot \Phi(\bm{U}_t^{(\tau)})+ \bm{W}_{t}^{(\tau)} + \bm{x}_{t}^{(\tau)} + \bm{\epsilon}_t(\bm{U}_t^{(\tau)})
\end{align}
for all $t\geq \tau$. 
This provides an equivalent representation of $y_t$ via a linearly parameterized mapping (see Figure \ref{fig:approx}), with the goal of accurately estimating the true matrix $G^*$ that governs the input-output mapping from the control inputs $\bm{U}_t^{(\tau)}=(u_t, \dots, u_{t-\tau})$ to the observation $y_t$. \qed

Function approximation theory ensures that the relationship \eqref{expansion3} is valid since our function $g\circ f\circ \dots \circ f$ is continuous. In particular, Assumption \ref{lipschitz} (Lipschitz continuity) makes it natural to choose Lipschitz continuous basis functions such as polynomials or radial basis functions.
\begin{assumption}[Basis functions]\label{basis}
  Each basis function  $\phi_i$ is designed to be $L_\phi$-Lipschitz, namely, 
  \begin{align}\label{basislipschitz}
  |\phi_i(\bm{U}_{t}^{(\tau)}) - \phi_i(\bm{\tilde U}_{t}^{(\tau)})| \leq 
   L_\phi\|\bm{U}_{t}^{(\tau)} - \bm{\tilde U}_{t}^{(\tau)}\|_2 , 
  \end{align}
  for all $i=1,\dots,M$ and all $\bm{U}_{t}^{(\tau)}, \bm{\tilde U}_{t}^{(\tau)}\in(\mathcal{U})^{\tau+1}$.
  Also, each basis function with inputs should excite the system for the exploration in learning the system. In other words, there exists a universal constant $\lambda>0$ such that
  \begin{align}\label{basisexcite}
\mathbb{E}\left[\Phi(\bm{U}_t^{(\tau)})\Phi(\bm{U}_t^{(\tau)})^T \right]\succeq \lambda^2 I_M
  \end{align}
  holds for all $t\geq \tau$.
  We further assume that $\Phi(0) = 0$, meaning that zero input results in zero basis function values. 
\end{assumption}

We also consider both inputs and disturbances on the system to be sub-Gaussian variables. For example, any bounded variables automatically satisfy this assumption. We use the definition  given in \citet{vershynin2018high} (see the definition, the $\psi_2$-norm, and properties of sub-Gaussian variables in Appendix \ref{prelimsub}). Note that we do not require each input or disturbance to have a zero mean (see Definitions \ref{subgdef1} and \ref{subgvec}). The formal assumptions are given below.

\begin{assumption}[sub-Gaussian control inputs]\label{subinputs}
    We design our control input to be independent sub-Gaussian variables, meaning that
    $u_0, u_1, \dots, u_{T-1}$ are independent of each other and 
    there exists a finite $\sigma_u>0$ such that $\|u_t\|_{\psi_2} \leq \sigma_u$ for all $t= 0, \dots, T-1$.
\end{assumption}

\begin{assumption}[sub-Gaussian disturbances]\label{subattacks}
    Define a filtration \[\mathcal{F}_t = \bm{\sigma}\{x_0,x_1, \dots, x_t\}.\] Then, there exists $\sigma_w >0$ such that $\|x_0\|_{\psi_2} \leq \sigma_w$ and 
    $\|w_t\|_{\psi_2}\leq \sigma_w$ conditioned on $\mathcal{F}_t$ for all $t= 0, \dots, T-1$ and $\mathcal{F}_t$. 
\end{assumption}

\begin{remark}
While prior literature typically assumes zero-mean Gaussian inputs, we significantly relax these conditions by only requiring $u_t$ to be sub-Gaussian (see Assumption \ref{subinputs}), and $\Phi(\bm{U}_t^{(\tau)})$ to be Lipschitz and excite the system (see Assumption \ref{basis}). 
These assumptions characterize general conditions on control inputs, which better capture practical requirements for nonlinear system identification. For example, real-world actuators can only accept  bounded inputs since most physical components  are subject to hard minimum and maximum limits; reference tracking tasks require nonzero-mean inputs to successfully navigate toward a target or avoid hazards.
In practice, control inputs are often of the form $K(y_t) + [\text{excitation term}]$ to improve performance (\textit{e.g.}, minimize costs), where $y_t$ is the observation at time $t$ and $K$ is an adaptive controller that is refined over time as the model estimate progressively improves \cite{kumar1986stochastic}. Our characterization allows nonzero-mean $K(y_t)$ and non-Gaussian $[\text{excitation term}]$, providing a secondary benefit for system identification.
\end{remark}

If the disturbance $w_t$ is always adversarial with nonzero-mean, any estimator may be misled. For example, the adversary can always inject an attack $w_t$ that drives the next state $x_{t+1}$ to be irrelevant to the current state $x_t$, preventing any valid estimation method from extracting useful information  \citep{kim2024prevailing}. 
Hence, we may need to restrict the time instances $t$ in which the adversary may be able to fully attack the system via $w_t$.
We now formally present the additional restriction on our disturbances $w_t$, under which the input-output mapping in \eqref{relationshipphi} is accurately estimated. 
\begin{assumption}[Probabilistic Adversarial Attack]\label{attackprob}
   $w_t$ is an adversarial attack at each time $t$ with probability $p<\frac{1}{2\tau}$ conditioned on $\mathcal{F}_t$; \textit{i.e.}, there exists a sequence $(\xi_t)_{t\geq 0}$ of independent $\text{Bernoulli}(p)$ variables such that 
    \begin{equation}\label{include}
    \{\xi_t = 0\} \subseteq \{w_t=0\}, \quad \forall t\geq 0.
    \end{equation}
\end{assumption}
Note that Assumption \ref{attackprob} imposes the restriction on attack times, not the value of attacks. Thus, the assumption implies that the system is not under attack (case 1 in Figure \ref{fig:attack}) with probability at least $1-p$, since $\xi_t = 0$ implies $w_t=0$. 
At attack times (case 2 in Figure \ref{fig:attack}), the adversary uses the information in the filtration $\mathcal{F}_t$ to generate disturbances $w_t$, which can therefore be correlated and possibly adversarial. 
   
\medskip

\begin{remark}[Choice of $\tau$]\label{remarkprob}
    In Assumption \ref{attackprob}, the attack probability depends on a learner-defined constant $\tau$, which represents an input memory length. 
Applying a finite-memory approximation accordingly restricts the ability of the adversary since the permissible attack probability $\frac{1}{2\tau}$ depends on the memory length $\tau$. It is worth noting that the term  $\bm{W}_{t}^{(\tau)}$ in \eqref{relationshipphi} is identically zero if the system is not under attack for $\tau$ consecutive periods; \textit{i.e.} $w_{t-1} =\dots = w_{t-\tau} = 0$, which happens with probability at least $(1-p)^\tau$. 
We have $(1-p)^\tau > 0.5$ with the restriction on attack probability $p<\frac{1}{2\tau}$, which we will leverage to prove the useful results on the estimation error.    
\end{remark}

Given a time horizon $T$, we aim to learn the true system $G^*$ in \eqref{expansion3}-\eqref{relationshipphi}, using the following $\ell_2$-norm estimator based on partial observations $\{y_t\}_{t=\tau}^{T-1}$ and control inputs $\{u_t\}_{t=0}^{T-1}$: 
\begin{align}\label{argmin}
\hat G_T = \argmin_{G} \sum_{t=\tau}^{T-1}\left\|y_t - G\cdot \Phi(\bm{U}_t^{(\tau)})\right\|_2
\end{align}

\vspace{-1mm}
Under the stated assumptions, we will show in the next section that the $\ell_2$-norm estimator achieves the optimal estimation error $O(\rho^\tau)$, where $\rho$ is the contraction factor in Assumption \ref{lipschitz} and $\tau$ is the input memory length.

\vspace{-1mm}

\section{Main Theorems and Analysis Outline}\label{sec:3}
In this section, we will state our main theorems on bounding the estimation error to identify $G^*$ and provide the analysis outline. Note that any $G^*$ satisfying \eqref{expansion3} is regarded as a valid approximation to the true input-output mapping of the system \eqref{model}.

\subsection{Main Theorem}
Our main theorem holds under the stated assumptions, which incorporates non-Gaussian inputs and correlated, nonzero-mean, adversarial disturbances, with an attack probability $p$  no greater than $\frac{1}{2\tau}$.
\begin{theorem}\label{main}
    Suppose that Assumptions \ref{lipschitz}, \ref{basis}, \ref{subinputs}, \ref{subattacks},  and \ref{attackprob} hold. Consider  $\nu:= \frac{\sqrt{M\tau} L_\phi \sigma_u}{\lambda}$ and   an approximation tolerance $\bar \epsilon \ge 0$. 
  Let $G^*$ be any matrix 
    that satisfies \eqref{expansion3} with $\|\bm{\epsilon}_t(\bm{U}_t^{(\tau)})\|_2 \leq \bar \epsilon $ for all $t$.
Also, let $\hat G_T$ denote a solution to the $\ell_2$-norm estimator given in \eqref{argmin}. Given $\delta\in(0,1]$, when 
\begin{align}\label{finaltime4}
    \nonumber &T= \Omega\Biggr(\frac{\tau \nu^8}{(2(1-p)^\tau-1)^2}\times \\&\hspace{12mm}\biggr[rM \log\biggr(\frac{\tau \nu}{2(1-p)^\tau-1} \biggr)+ \log\biggr(\frac{1}{\delta}\biggr)\biggr]\Biggr),
 \end{align}
 we have 
 \begin{align}\label{upperbound1}
   \nonumber &\|G^*-\hat G_T\|_F \\&\hspace{1mm}=  O\left(
    \left( \frac{\rho^\tau L}{\lambda} \cdot \frac{\sigma_u + \sigma_w}{1-\rho} 
    + \frac{\bar \epsilon}{\lambda} \right)
    \cdot \frac{\nu^3}{2(1-p)^\tau - 1} 
\right)
 \end{align}
 with probability at least $1-\delta$. 
\end{theorem}

\begin{remark}
    Our main theorem states that after the time given in \eqref{finaltime4} and with a sufficiently small $\bar \epsilon$, the estimation error of $O(\rho^\tau)$ is achieved, considering that additional polynomial terms in $\tau$ are dominated by the exponential decay in $\tau$.  However, notice that the estimation error does not decay as the time
$T$ increases, and thus cannot converge to zero. While this error bound decreases as $\tau$ grows, the memory length $\tau$ will be chosen as a finite number at the learner's discretion, and thus the bound should be treated as a positive constant.  
This suggests that the learner may want to choose a sufficiently long $\tau$ to obtain a smaller estimation error. However, increasing $\tau$ has three drawbacks: First, it restricts the attack probability as stated in Remark \ref{remarkprob}. Second, the required time \eqref{finaltime4} implies that it takes longer to arrive at the desired estimation estimation bound. Third, the basis function $\Phi$ may become significantly  complex to incorporate longer history, and naturally the optimization problem needs far more computations. Thus, even though the estimation error may decrease with increasing $\tau$, the aforementioned demerits create an inherent trade-off in selecting an appropriate value for $\tau$. 
\end{remark}

\subsection{Analysis Outline}
We now provide the outline of proof analysis. The proof details can be found in Appendix \ref{proofc}. 

\subsubsection{Analysis Without Past State and Approximation Effect}\label{without}
Our proof technique starts from a special case where the term $\bm{x}_t^{(\tau)}$ and $\bm{\epsilon}_t$ in the equation \eqref{relationshipphi}  are zero. This auxiliary setting will later be generalized to the case where they can take nonzero values. In the following theorem, we establish a sufficient condition for the true matrix $G^*$ to be the unique solution to the $\ell_2$-norm minimization problem \eqref{argmin}. 

\begin{theorem}\label{suffaux}
    Suppose that $\bm{x}_t^{(\tau)} = 0$ and $\bm{\epsilon}_t=0$ for all $t$. Then, $G^*$ is the unique solution to the $\ell_2$-norm estimator \eqref{argmin} if 
    \begin{align}\label{auxsuff1}
   \nonumber& \sum_{t=\tau}^{T-1} \bigr\|Z \Phi(\bm{U}_t^{(\tau)})\bigr\|_2 \cdot \mathbb{I}\{\bm{W}_t^{(\tau)}=0\}   \\&\hspace{10mm}- \sum_{t=\tau}^{T-1} \bigr\|Z \Phi(\bm{U}_t^{(\tau)})\bigr\|_2 \cdot \mathbb{I}\{\bm{W}_t^{(\tau)}\neq 0\}  >0, 
    \end{align}
    holds for all $Z\in\mathbb{R}^{r\times M}$ such that $\|Z\|_F=1$.
\end{theorem}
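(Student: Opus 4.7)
\medskip

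The plan is to show that the $\ell_2$ objective evaluated at any $G \neq G^*$ strictly exceeds its value at $G^*$, using condition \eqref{auxsuff1} as the decisive ingredient. Since $\bm{x}_t^{(\tau)}=0$ and $\bm{\epsilon}_t=0$, the model \eqref{relationshipphi} reduces to $y_t = G^*\Phi(\bm{U}_t^{(\tau)}) + \bm{W}_t^{(\tau)}$, so the residual at $G^*$ is exactly $\bm{W}_t^{(\tau)}$. First I would parametrize an arbitrary competitor by writing $G = G^* + \alpha Z$, where $\alpha := \|G - G^*\|_F > 0$ and $Z := (G - G^*)/\alpha$, so that $\|Z\|_F = 1$; it then suffices to prove that $\sum_{t=\tau}^{T-1}\|y_t - G\Phi(\bm{U}_t^{(\tau)})\|_2 > \sum_{t=\tau}^{T-1}\|y_t - G^*\Phi(\bm{U}_t^{(\tau)})\|_2$ for every such $(\alpha, Z)$.

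Next I would split the index set $\{\tau,\dots,T-1\}$ according to whether $\bm{W}_t^{(\tau)} = 0$ or not. On the clean-residual indices, $y_t - G^*\Phi(\bm{U}_t^{(\tau)}) = 0$ and $y_t - G\Phi(\bm{U}_t^{(\tau)}) = -\alpha Z\Phi(\bm{U}_t^{(\tau)})$, so the gap contributed is exactly $\alpha\|Z\Phi(\bm{U}_t^{(\tau)})\|_2$. On the corrupted indices, the residual at $G$ is $\bm{W}_t^{(\tau)} - \alpha Z\Phi(\bm{U}_t^{(\tau)})$ while at $G^*$ it is $\bm{W}_t^{(\tau)}$; the reverse triangle inequality gives
\begin{align*}
\bigl\|\bm{W}_t^{(\tau)} - \alpha Z\Phi(\bm{U}_t^{(\tau)})\bigr\|_2 - \bigl\|\bm{W}_t^{(\tau)}\bigr\|_2 \;\geq\; -\alpha\bigl\|Z\Phi(\bm{U}_t^{(\tau)})\bigr\|_2.
\end{align*}
Summing these two contributions yields
\begin{align*}
\sum_{t=\tau}^{T-1}\bigl\|y_t - G\Phi(\bm{U}_t^{(\tau)})\bigr\|_2 - \sum_{t=\tau}^{T-1}\bigl\|y_t - G^*\Phi(\bm{U}_t^{(\tau)})\bigr\|_2 \;\geq\; \alpha\Bigl(\mathrm{LHS~of~\eqref{auxsuff1}}\Bigr),
\end{align*}
where the right-hand side inside the parentheses is exactly the expression in \eqref{auxsuff1} applied to this particular $Z$.

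Finally, I would invoke the hypothesis that \eqref{auxsuff1} holds strictly for every $Z$ with $\|Z\|_F = 1$, which immediately implies that the above difference is strictly positive for every $G \neq G^*$. Hence $G^*$ strictly improves upon any alternative, proving that it is the unique minimizer of \eqref{argmin}. I do not anticipate any serious technical obstacle: the only subtle point is choosing the correct direction of the reverse triangle inequality so that the attack-contaminated terms are lower-bounded by $-\alpha\|Z\Phi(\bm{U}_t^{(\tau)})\|_2$ rather than the useless upper bound, and then normalizing by $\alpha > 0$ so that condition \eqref{auxsuff1}—stated for unit-Frobenius-norm $Z$—can be applied directly.
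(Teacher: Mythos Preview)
Your proposal is correct and follows essentially the same approach as the paper: both split the sum according to whether $\bm{W}_t^{(\tau)}=0$, use the (reverse) triangle inequality on the corrupted terms, and then normalize by the Frobenius norm of the perturbation to invoke \eqref{auxsuff1}. The only cosmetic difference is that the paper first argues via convexity that a strict local minimum suffices and then establishes the inequality in a neighborhood $0<\|\Delta\|_F\le\bar\Delta$, whereas you directly prove the strict inequality for every $G\neq G^*$; since after dividing by $\|\Delta\|_F$ the condition is scale-free anyway, your global argument is slightly more economical but substantively identical.
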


Theorem \ref{suffaux} implies that if the left-hand side given in equation \eqref{auxsuff1} is positive for all $Z\in\mathbb{R}^{r\times M}$ such that $\|Z\|_F=1$, we will actually be able to exactly recover the true matrix $G^*$ with the $\ell_2$-norm estimator.  
In particular, thanks to Lemma  \ref{separation} and \eqref{include}, we have

\begin{align*}
    \mathbb{P}(\bm{W}_t^{(\tau)}&= 0)\geq \mathbb{P}(w_{t-1}= 0, \dots, w_{t-\tau} = 0 )\\&\geq \mathbb{P}(\xi_{t-1}= 0, \dots, \xi_{t-\tau} = 0 ) = (1-p)^\tau >0.5.
\end{align*}
Then, \underline{for a fixed $Z$}, the sub-Gaussianity of control inputs $u_t$, Lipschitzness of $\Phi(\cdot)$, and the excitation condition \eqref{basisexcite} ensures that the left-hand side of \eqref{auxsuff1} will be sufficiently positive after a finite time. 


We now analyze how the term in \eqref{auxsuff1} changes when evaluated at \underline{two different points $Z, \tilde Z \in \mathbb{R}^{r\times M}$}. 
We show that the difference is small when the points are close. Thus, if one can select a sufficient number of points for which the term in \eqref{auxsuff1} is simultaneously positive with high probability, then their neighborhoods will also yield positive values. This implies that the term in \eqref{auxsuff1} is universally positive for all points in $\mathbb{R}^{r\times M}$ with unit Frobenius norm. 
To quantify how many such points are needed, we invoke the well-known covering number argument \citep{vershynin2018high}. \qed


\subsubsection{Beyond the Zero Past State and Approximation Effect}\label{with}

In general,  $\bm{x}_t^{(\tau)}$ in \eqref{relationshipphi}  is nonzero since $\|x_{t-\tau}\|_2\neq 0$  (see Lemma \ref{separation}). Moreover, we may face a nonzero approximation error vector $\bm{\epsilon}_t$, whose magnitude depends on the expressiveness of the chosen basis functions. Thus, we need to extend the previous analysis in Section \ref{without} to general cases. 
From the optimality of $\hat G_T$ for the $\ell_2$-norm estimator \eqref{argmin} and the input-output mapping \eqref{relationshipphi}, we have
\begin{align}
\nonumber&\sum_{t=\tau}^{T-1} \|(G^*-\hat G_T)\Phi(\bm{U}_t^{(\tau)}) + \bm{W}_{t}^{(\tau)} + \bm{x}_{t}^{(\tau)}+\bm{\epsilon}_t\|_2 \\&\hspace{30mm}\leq \sum_{t=\tau}^{T-1} \|\bm{W}_{t}^{(\tau)} + \bm{x}_{t}^{(\tau)}+\bm{\epsilon}_t\|_2, 
\end{align}
where the right-hand side is the result of substituting $G^*$ into $G$ in \eqref{argmin}. Using the triangle inequality, we arrive at 
\begin{align}\label{midmid}
  \nonumber&  \sum_{t=\tau}^{T-1} \|(G^*-\hat G_T)\Phi(\bm{U}_t^{(\tau)}) + \bm{W}_{t}^{(\tau)}\|_2-\|\bm{W}_{t}^{(\tau)}\|_2 \\&\hspace{30mm}\leq 2\sum_{t=\tau}^{T-1} \left(\|\bm{x}_{t}^{(\tau)}\|_2 + \|\bm{\epsilon}_t\|_2\right)
\end{align}
where the left-hand side turns out to be the perturbation of $\ell_2$-norm estimator without the effect of $\bm{x}_t^{(\tau)}$ and $\bm{\epsilon}_t$. This can be lower-bounded by using the \textit{positive constant lower bound $\Omega(T)$} of the term in \eqref{auxsuff1} constructed in the previous Section \ref{without}. The right-hand side is also upper-bounded by $O(T)$ since disturbances and control inputs are sub-Gaussian variables (see Lemma \ref{normmax}). Accordingly, we can bound the estimation error $\|G^* - \hat G_T\|_F$ using \eqref{midmid} to obtain the results in Theorem \ref{main}. \qed 

\medskip
\begin{remark}\label{12infty}
    We note that any $\ell_\alpha$-norm estimator with $\alpha \ge 1$ can  ensure the left-hand side of \eqref{auxsuff1} remains universally positive even though the $\ell_2$-norm is replaced by other norms. However, the resulting estimation error bound is weaker than that of the $\ell_2$-norm estimator. We analyze two cases:

{\underline{Case 1} — $1 \le \alpha < 2$:} In this regime, the final estimation error bound \eqref{upperbound1} suffers from an additional multiplicative factor, at most $\sqrt{r}$. This arises from the inequality $\|\bm{x}_t^{(\tau)}\|_1+\|\bm{\epsilon}_t\|_1 \le \sqrt{r} (\|\bm{x}_t^{(\tau)}\|_2+\|\bm{\epsilon}_t\|_2 )$, which will appear in \eqref{midmid} and ultimately worsens the estimation error bound. 

{\underline{Case 2} — $2 < \alpha \le \infty$:} Our analysis hinges on $\mathbb{E}\bigr[\|Z \Phi(\bm{U}_t^{(\tau)})\|_2^2\bigr] \ge \lambda^2$ (see \eqref{lambdamin}) for the $\ell_2$-norm estimator. Using $\alpha > 2$ also introduces an additional factor of $\sqrt{r}$, since the term of interest in the worst case is $\|Z \Phi(\bm{U}_t^{(\tau)})\|_\infty \ge \frac{1}{\sqrt{r}}\|Z \Phi(\bm{U}_t^{(\tau)})\|_2\geq \frac{\lambda}{\sqrt{r}}$, which negatively affects the estimation error bound.

Consequently, the $\ell_2$-norm estimator yields a tighter estimation bound $O\bigr(\frac{\rho^\tau L}{\lambda}\bigr)$ than those based on other norms since it does not depend on the observation dimension $r$. This will further be supported by the lower bound presented in the next subsection, which is also independent of $r$.
\end{remark}

\subsection{Lower Bound}
In this section, we claim that there is no estimator that can improve the constant bound in Theorem \ref{main} in the worst case. 
\begin{theorem}\label{lowerboundmain}
   Given $\delta\in(0,1]$, suppose that probabilistic adversarial attacks $w_t$ are designed by an adversary to satisfy $\sigma_w = \bigr(\frac{1}{\rho}\bigr)^{\Omega(\tau \log (T/\delta))}$, where $0<\rho<1$ is the contraction factor of $f$, and $\tau$ is the input memory length. Then, there exists a problem instance satisfying Assumptions \ref{lipschitz}, \ref{basis}, \ref{subinputs}, \ref{subattacks},  and \ref{attackprob} that suffers from $\Omega\bigr(\frac{\rho^{\tau}L }{\lambda}\bigr)$ estimation error with probability at least $1-\delta$ for any estimator. 
\end{theorem}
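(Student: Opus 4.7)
The plan is to apply a two-point (Le Cam-style) indistinguishability argument. I would construct two problem instances, both satisfying Assumptions \ref{lipschitz}--\ref{attackprob}, whose true input--output matrices $G^*_1$ and $G^*_2$ differ in Frobenius norm by $2c\rho^\tau L/\lambda$ for some absolute constant $c>0$, and design adversarial attack policies within each instance so that the joint distributions of the observed trajectory $(u_0,\ldots,u_{T-1},y_0,\ldots,y_{T-1})$ coincide on an event of probability at least $1-\delta$. Since any estimator $\hat G_T$ is a measurable function of that trajectory, its output is common across the two instances on the good event, and a disjoint-ball argument allows us to select the instance whose $G^*$ lies farther from $\hat G_T$, forcing $\|G^*-\hat G_T\|_F \ge c\rho^\tau L/\lambda = \Omega(\rho^\tau L/\lambda)$ with probability at least $1-\delta$.

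For the construction I would take both instances to be simple scalar linear systems sharing the dynamics $f(x,u,w) = \rho x + u + w$ and observation $g(x,u) = Lx$, so that Assumption \ref{lipschitz} holds with $C=1$. With i.i.d.\ bounded inputs $u_t$ and the canonical basis $\phi_i(\bm{U}_t^{(\tau)}) = u_{t-i}$, Assumptions \ref{basis}--\ref{subinputs} hold with $\lambda^2 = \mathrm{Var}(u_0)$ and $L_\phi = 1$. The two instances differ only in a small additive perturbation of $G^*$ concentrated in the column corresponding to the oldest input $u_{t-\tau}$, of magnitude $2c\rho^\tau L/\lambda$; this can be realized by a minor auxiliary state augmentation (for instance, an extra state component that tracks the $\tau$-step-delayed input and whose readout coefficient differs between the two instances) and it preserves all Lipschitz and contraction constants up to absolute factors.

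The attack design leverages the explicit residual $\bm{W}_t^{(\tau)} = L\sum_{k=1}^\tau \rho^{k-1} w_{t-k}$ coming from Lemma \ref{separation}. In instance 1 the adversary sets $w_t^{(1)} \equiv 0$, whereas in instance 2 the adversary activates attacks on a Bernoulli-$p$ schedule and chooses their magnitudes so that $(G^*_1-G^*_2)\Phi(\bm{U}_t^{(\tau)}) + \bm{W}_t^{(\tau),(2)} = 0$ at every $t\ge\tau$ on a good event. Because activations occur only with probability $p<1/(2\tau)$, a single activated attack at time $s$ typically has to absorb the prescribed offset across the entire downstream block $t\in[s+1,s+\tau]$, and propagating the attack through $\tau$ contracting applications of $f$ attenuates it by $\rho^{\tau-1}$, so the per-activation magnitude must inflate by $\rho^{-(\tau-1)}$. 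A union bound over the $T$ time steps controlling the ``unusually long attack-free runs'' event on the Bernoulli schedule produces the $\log(T/\delta)$ factor, and keeping each $w_t$ conditionally sub-Gaussian under $\mathcal{F}_t$ forces exactly the regime $\sigma_w = (1/\rho)^{\Omega(\tau\log(T/\delta))}$ asserted in the theorem.

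Conditioning on the good event of the schedule, the law of the observed sequence $(u_t,y_t)_{t=0}^{T-1}$ is identical across the two instances, so $\hat G_T$ is a fixed function of that sequence on this event. Since $\|G^*_1-G^*_2\|_F = 2c\rho^\tau L/\lambda$, any output $\hat G_T$ lies within $c\rho^\tau L/\lambda$ of at most one of $G^*_1,G^*_2$, so picking the problem instance whose $G^*$ is farther from typical $\hat G_T$ realizations yields the claimed lower bound with probability at least $1-\delta$. The main obstacle will be the attack-design step: producing a sparse attack sequence supported on the random Bernoulli schedule that exactly realizes the prescribed banded compensation constraints while respecting the conditional sub-Gaussian bound requires careful algebraic bookkeeping of the recursion, and precisely quantifying the rare event of long attack-free runs via Bernoulli concentration is what generates the $(1/\rho)^{\Omega(\tau\log(T/\delta))}$ scaling of the sub-Gaussian parameter $\sigma_w$.
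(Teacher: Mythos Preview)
Your Le Cam two-point scaffolding is the right shape, and the paper's argument is also a two-hypothesis indistinguishability construction, but your realization of the attack-design step does not go through, and the obstacle is structural rather than ``careful bookkeeping.'' You require $(G_1^\ast-G_2^\ast)\Phi(\bm U_t^{(\tau)}) + \bm W_t^{(\tau),(2)} = 0$ for \emph{every} $t\ge \tau$; with the perturbation in the $u_{t-\tau}$ column this means $\bm W_t^{(\tau),(2)}$ must equal a generically nonzero multiple of $u_{t-\tau}$ at each $t$. But $\bm W_t^{(\tau),(2)}$ depends only on $w_{t-1}^{(2)},\dots,w_{t-\tau}^{(2)}$, and by Assumption~\ref{attackprob} these all vanish simultaneously with probability $(1-p)^\tau>\tfrac12$. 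On any such attack-free window you get $\bm W_t^{(\tau),(2)}=0$ while the constraint demands a nonzero value, so compensation fails for more than half of the indices~$t$. A single scalar $w_s$ cannot absorb the downstream constraints $L\rho^{t-s-1}w_s=-\epsilon\,u_{t-\tau}$ for $t=s+1,\dots,s+\tau$ simultaneously (the $u_{t-\tau}$ are independent), and it contributes nothing for $t>s+\tau$ before the next attack. Equivalently: you have $\sim pT$ free attack values to satisfy $\sim T$ equality constraints $y_t^{(1)}=y_t^{(2)}$, which is generically infeasible. You also drop the $\bm x_t^{(\tau)}$ terms, which differ once the two state trajectories diverge. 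The observation laws therefore do \emph{not} coincide on a high-probability event, and the indistinguishability step collapses. The auxiliary ``$\tau$-step delay line'' you invoke to realize the perturbed $G^\ast$ is also in tension with Assumption~\ref{lipschitz}: a pure shift has Lipschitz constant $1$, not $C\rho^k$.

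The paper avoids all of this by putting the ambiguity in the \emph{basis} rather than in the dynamics. It uses a \emph{single} scalar system $f(x,u,w)=\rho(x+u+w)$, $g(x,u)=L(x+u)$ and a \emph{single} attack policy, so there is only one trajectory and indistinguishability of observations is automatic. The basis is two-dimensional: $\phi_1$ is the true disturbance-free map $g\circ f^{(\tau)}(0,\cdot,\cdot,0)$, and $\phi_2$ is the same expression with a Lipschitz nonlinearity $\beta$ (equal to the identity on $|x|\ge 1$ but not on $|x|<1$) inserted after the innermost application of $f$. The adversary's only job is to keep $x_t\ge 1$ for all $t$, which it does by setting $w_t=\sigma_w$ at Bernoulli-$p$ times; bounding the longest attack-free run by $O(\tau\log(T/\delta))$ via a union bound is exactly what forces $\sigma_w=(1/\rho)^{\Omega(\tau\log(T/\delta))}$. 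On the event $\{x_t\ge 1\ \forall t\}$, the two \emph{full} representations of $y_t$ (with and without $\beta$) coincide, so both $G^\ast=[1\ 0]$ and $G^\ast=[0\ 1]$ are consistent with the data and any estimator incurs error $\Omega(1)$ on at least one of them. The last step computes $\lambda=\Theta(\rho^\tau L)$ for this particular basis, which is how the constant error becomes $\Omega(\rho^\tau L/\lambda)$. Note that with your canonical basis $\phi_i=u_{t-i}$ one has $\lambda=\Theta(1)$, so you would need an actual $\Theta(\rho^\tau)$ unidentifiable Frobenius separation under the sparse-attack constraint; the paper's route shows no such separation is needed if one is allowed to tailor~$\Phi$.
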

\textit{Proof Sketch. } 
Consider the case of an approximation tolerance $\bar \epsilon=0$ and a scalar observation $y_t\in \mathbb{R}$.
Under the attack probability $O(1/\tau)$ in Assumption \ref{attackprob}, the maximum consecutive attack-free length is bounded by $O(\tau \log(T/\delta))$ with probability at least $1-\delta$. 
Then, probabilistic adversarial attacks enable the property $x_t \geq 1$ for all $t$ with high probability. This implies that $y_t$ in \eqref{expansion} can be written in two different functions $h_1\neq h_2$ such that
\begin{align}\label{h1h2not0}
\nonumber y_t &= h_1(x_{t-\tau}, u_{t-\tau}, \dots, u_t, w_{t-\tau}, \dots, w_{t-1})\\&= h_2(x_{t-\tau}, u_{t-\tau}, \dots, u_t, w_{t-\tau}, \dots, w_{t-1})
\end{align}
for all $x_{t-\tau} \geq 1$, which implies that $h_1$ and $h_2$ are not distinguishable under probabilistic adversarial attacks. 
However, the corresponding input-output mappings (see \eqref{expansion2}) will be 
\begin{align}\label{h1h20}
\nonumber &h_1(0, u_{t-\tau}, \dots, u_t, 0, \dots, 0) \quad \text{and}  \\  &h_2(0, u_{t-\tau}, \dots, u_t, 0, \dots, 0),
\end{align}
where $x_{t-\tau}$ and the disturbances are set to $0$. 
Choose the functions $h_1$ and $h_2$ to have different function values for \eqref{h1h20}, while satisfying the equation \eqref{h1h2not0} for all $x_{t-\tau}\geq 1$. 
As a result, any estimator may recover either one of the mappings $h_1$ or $h_2$ arbitrarily, given the same observation trajectory $y_0, y_1, \dots, y_{T-1}$. In particular, the two expressions in \eqref{h1h20} can differ by $\Omega(\rho^\tau L)$, leading to an estimation error $\Omega(\frac{\rho^\tau L}{\lambda})$. 
     The proof details can be found in Appendix \ref{proofd}.  \qed

\begin{remark}
We have established the lower bound $\Omega(\frac{ \rho^\tau L}{\lambda})$, which implies that the estimation error is bounded below by a positive constant for any estimator due to probabilistic adversarial attacks. While this matches the upper bound \eqref{upperbound1} up to the same order, the assumption on the sub-Gaussian norm of the attacks depends on $T$. If this norm is required to be uniformly bounded over all $T\geq 0$, it remains unclear whether the gap between the upper and lower bounds can be further tightened.
Meanwhile, the proof in Appendix \ref{proofd} relies on specially designed nonlinear basis functions to achieve the desired lower bound. It remains an intriguing open question whether there exists a linear system instance for which the upper and lower bounds match under the constraint that all basis functions are linear in control inputs.
\end{remark}

\section{Numerical Experiments}\label{sec:4}

In this section, we provide the numerical experiments that show the effectiveness of the $\ell_2$-norm estimator and illustrate how the results align with our theoretical findings. 

\subsection{Synthetic Examples}
\begin{figure*}[t]
    \centering
    \begin{subfigure}[b]{0.32\textwidth}
        \centering
        \includegraphics[height=126pt]{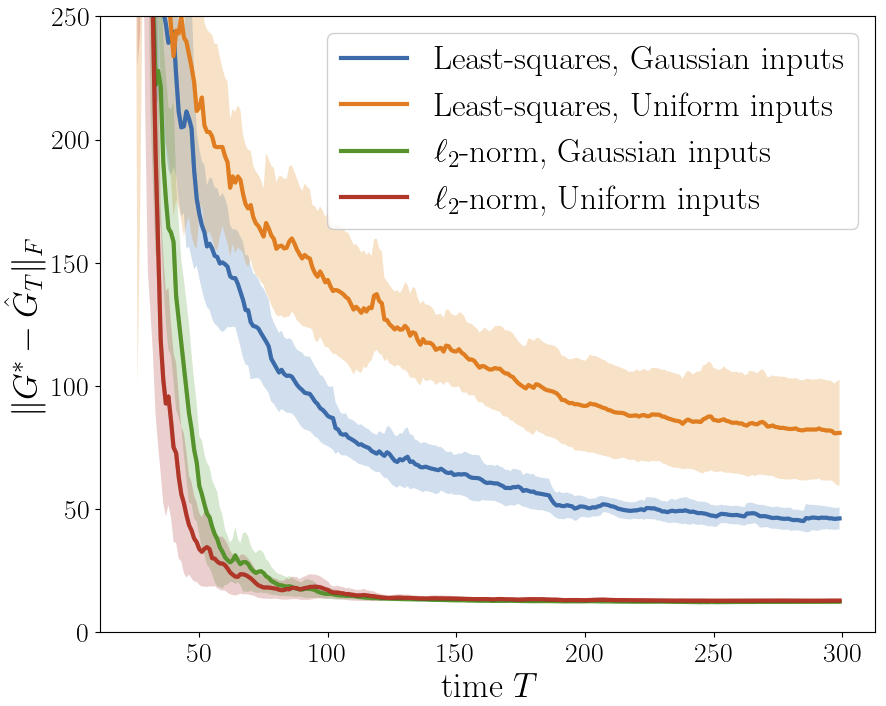}
        \caption{Comparison of the Least-squares and the $\ell_2$-norm estimator}
        \label{fig:leastl1}
    \end{subfigure}
    \hfill
    \begin{subfigure}[b]{0.32\textwidth}
        \centering
        \includegraphics[height=125pt]{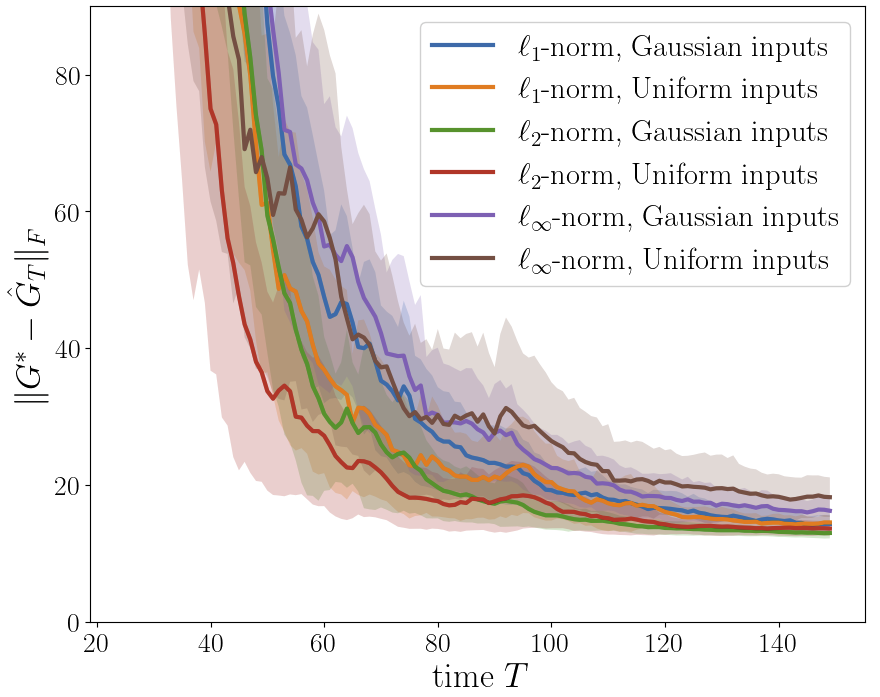}
        \caption{Comparison of the $\ell_\alpha$-norm estimators ($\alpha=1,2,\infty$)}
        \label{fig:12inf}
    \end{subfigure}
    \hfill
    \begin{subfigure}[b]{0.32\textwidth}
        \centering
        \includegraphics[height=126pt]{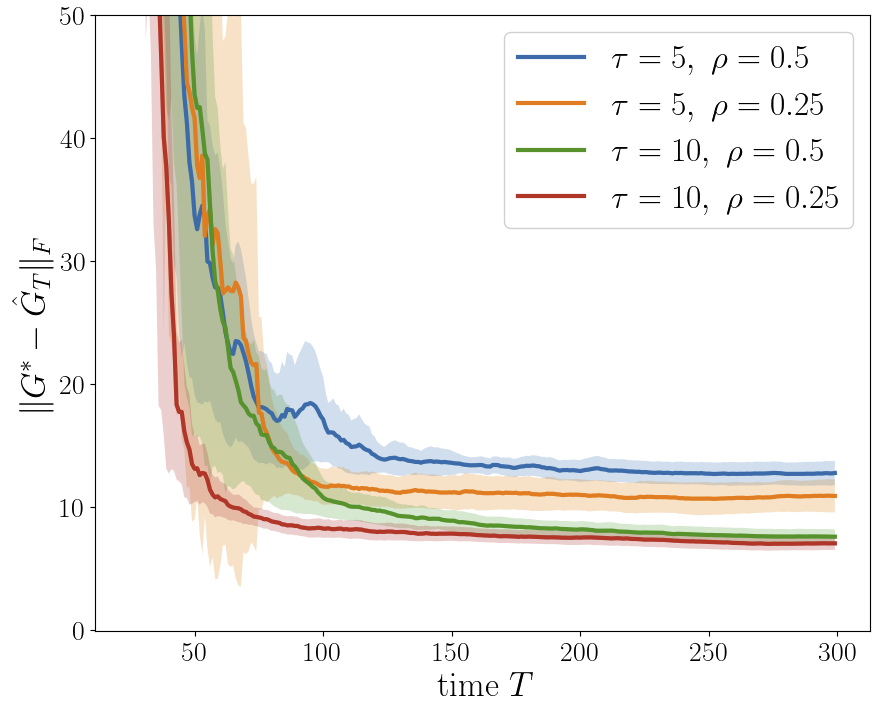}
        \caption{Analysis of $\tau$ and $\rho$ effects under non-Gaussian inputs}
        \label{fig:Uniform}
    \end{subfigure}
    
    \caption{Estimation error of the input-output mapping \eqref{expmapping} under probabilistic adversarial attacks.}
    \label{fig:experiments}
\end{figure*}
We consider the following dynamics with the states $x_t\in \mathbb{R}^{100}$, the inputs $u_t\in\mathbb{R}^5$, the disturbances $w_t\in\mathbb{R}^{100}$, and the observations $y_t\in\mathbb{R}^{10}$  for $t=0,\dots, T-1$:
\begin{align}\label{dynamicexperiment}
   \nonumber x_{t+1}&=f(x_t,u_t,w_t) = \sigma (Ax_t+Bu_t+w_t), \\ y_t &=g(x_t,u_t) = Cx_t+Du_t,
\end{align}
where $A\in\mathbb{R}^{100\times 100}$, $B\in\mathbb{R}^{100\times 5}$, $C\in\mathbb{R}^{10\times 100}$, $D\in\mathbb{R}^{10\times 5}$ are randomly selected matrices and the function $\sigma(x)=\tanh(x)$ is $1$-Lipschitz and is applied elementwise to each coordinate.  Each entry of $A, B, C$ and $D$ is randomly selected from
$\text{Unif}[-1,1]$ 
and $A$ is normalized subsequently to have a spectral radius less than one (see Assumption \ref{lipschitz} and Remark \ref{linearremark}). 
As a result, the $\tau$-fold composition of $f$ will have the form of a feedforward neural net, where $\sigma(\cdot)$ works as an activation function. For the system \eqref{dynamicexperiment}, the relevant input-output mapping in \eqref{expansion2} can be written as 
\begin{align}\label{expmapping}
\nonumber&    \sigma(C\sigma(A \sigma(\cdots \sigma(A \sigma(Bu_{t-\tau}) + Bu_{t-\tau+1})\cdots )\\&\hspace{43mm}+Bu_{t-1})+Du_t).
\end{align}
We first reformulate the true input-output mapping as a linear combination of basis functions $G^* \cdot \Phi(\cdot)$ (see \eqref{relationshipphi}). Our chosen basis functions are polynomial kernels up to degree 3, using randomly sampled tuples $(u_t, \dots, u_{t-\tau})$ whose entries are drawn from $\text{Unif}[-15,15]$. We then use kernel regression to estimate the true matrix $G^*$. The number of kernels used as basis functions is set to $M=25$.

\textit{Experiment 1. }
The first experiment compares the $\ell_2$-norm estimator with a standard least-squares estimator. The attack probability is set to $\frac{1}{2\tau+1}$, with sub-Gaussian attack $w_t$ designed to have a covariance $25I_{100}$ and a mean vector whose entries are either $300$ or $1000$ depending on the sign of the corresponding coordinate of $x_t$.
 Figure~\ref{fig:leastl1} shows that least-squares  is vulnerable to attacks and fails to recover the system, while the $\ell_2$-norm estimator closely identifies the system after finite time. Results are provided under both Gaussian inputs $N(0, 100I_{5})$ and non-Gaussian (nonzero-mean) inputs $\text{Unif}[-8,10]^{5}$. Our theory only requires Assumptions  \ref{basis} and \ref{subinputs}  on the inputs, which is supported by our results showing that the $\ell_2$-norm estimator converges to the same stable region for the Gaussian inputs even when using the nonzero-mean non-Gaussian inputs (see Table \ref{table1}).

\textit{Experiment 2. } Under the same experimental settings, we now present  experiments comparing the $\ell_\alpha$-norm estimators, where $\alpha=1,2,\infty$. As discussed in Remark \ref{12infty}, all norm estimators are expected to recover the true matrix $G^*$ to some extent, but only the $\ell_2$-norm estimator theoretically achieves the optimal error $O(\rho^\tau)$ that matches the lower bound $\Omega(\rho^\tau)$ (see Theorems \ref{main} and \ref{lowerboundmain}). Figure \ref{fig:12inf} indeed verifies that the $\ell_2$-norm estimator outperforms the other norm estimators, although the empirical differences are relatively small.

\textit{Experiment 3. }
We finally provide experiments under different hyperparameters: the contraction factor $\rho$ and the input memory length $\tau$, using the $\ell_2$-norm estimator. 
Figure~\ref{fig:Uniform} demonstrates how the estimation error evolves over time under non-Gaussian inputs considered in \textit{Experiment 1}. The figure illustrates that a larger $\rho$ results in a higher estimation error, while a larger $\tau$  leads to a smaller eventual estimation error. These two observations align precisely with an estimation error of $O(\rho^\tau)$—increasing with $\rho$ and decreasing with $\tau$. 
It is worth noting that this estimation error does not decay over time in the figures, which strongly supports the constant lower bound $\Omega(\rho^\tau)$. 

In Appendix \ref{sec:expdetails}, we provide experimental details along with additional results for the case where an unbounded function is designed as the activation function $\sigma$.

\subsection{Power System Experiments}
We now present real-world experiments on a power grid consisting of $n$ different generators, governed by the nonlinear swing dynamics:
\begin{align}\label{swing}
   \nonumber&M_i \ddot \delta_i + D_i \dot \delta_i= h(u_i,w_i) - \sum_{j=1}^n |E_i||E_j|B_{ij} \sin(\delta_i - \delta_j), \\&\hspace{52.5mm} i=1,\dots, n, 
\end{align}
where $M_i, D_i, |E_i|, B_{ij}, G_{ij}$ are unknown system parameters, and $\delta_i, \dot \delta_i$ are states (generator's rotor angle and speed), $u_i$ is the control input (mechanical power injection into each generator) at node $i$, and $w_i$ is the disturbance applied to each node $i$. 
The dynamics capture the mutual coupling between the $i^\text{th}$ and $j^\text{th}$ generators, for all $i,j=1,\dots, n$. The function $h$ models the interaction between control inputs and disturbances, where we set $h(u_i,w_i) = u_i+w_i$.

Assuming that we can only observe the first $r<n$ generators (\textit{i.e.}, $\delta_1, \cdots \delta_r, \nu_1,\dots, \nu_r$), our  goal is to identify the input-output mapping of nonlinear swing dynamics given observations from a limited number of nodes. In other words, we aim to determine how power injections into all $n$ generators influence the rotor angles and speeds of the first $r$ generators.

The objective of controlling the system \eqref{swing} is to regulate every generator's rotor speed $\dot \delta_i$ to the nominal frequency (\textit{e.g.}, 60 Hz in the US), synchronizing the individual machines into a unified power grid. Although the system is generally robust to typical operational noise, it is vulnerable to infrequent, large-magnitude adversarial attacks on its power injection channels. To defend against these threats, it is necessary to identify the input-output mapping in the presence of probabilistic adversarial attacks.
Attack times occur sparsely with probability $\frac{1}{2\tau+1}$, during which the adversary designs the attack at time $t$ by exploiting the concurrent control input, injecting a large positive value when the control input is positive and a large negative value otherwise.

For each of the $n$ nodes, we set the parameters $M_i \in [2,9]$, $D_i\in[0.2,1.8]$, $|E_i|\in[0.95,1.10]$, and $B_{ij}\in[5,15]$, where $B_{ij} = B_{ji}$. 
As shown in Figure \ref{fig:power},  the $\ell_2$-norm estimator outperforms the least-squares method, which supports our theoretical results. Since the least-squares typically requires zero-mean disturbances, its poor performance under adversarial attack is expected.

Appendix \ref{sec:power} provides further experimental details, including symbol definitions and the approximation method to yield a linearly parameterized input-output mapping.

\begin{figure}[t]
    \centering
    \includegraphics[width=0.84\linewidth]{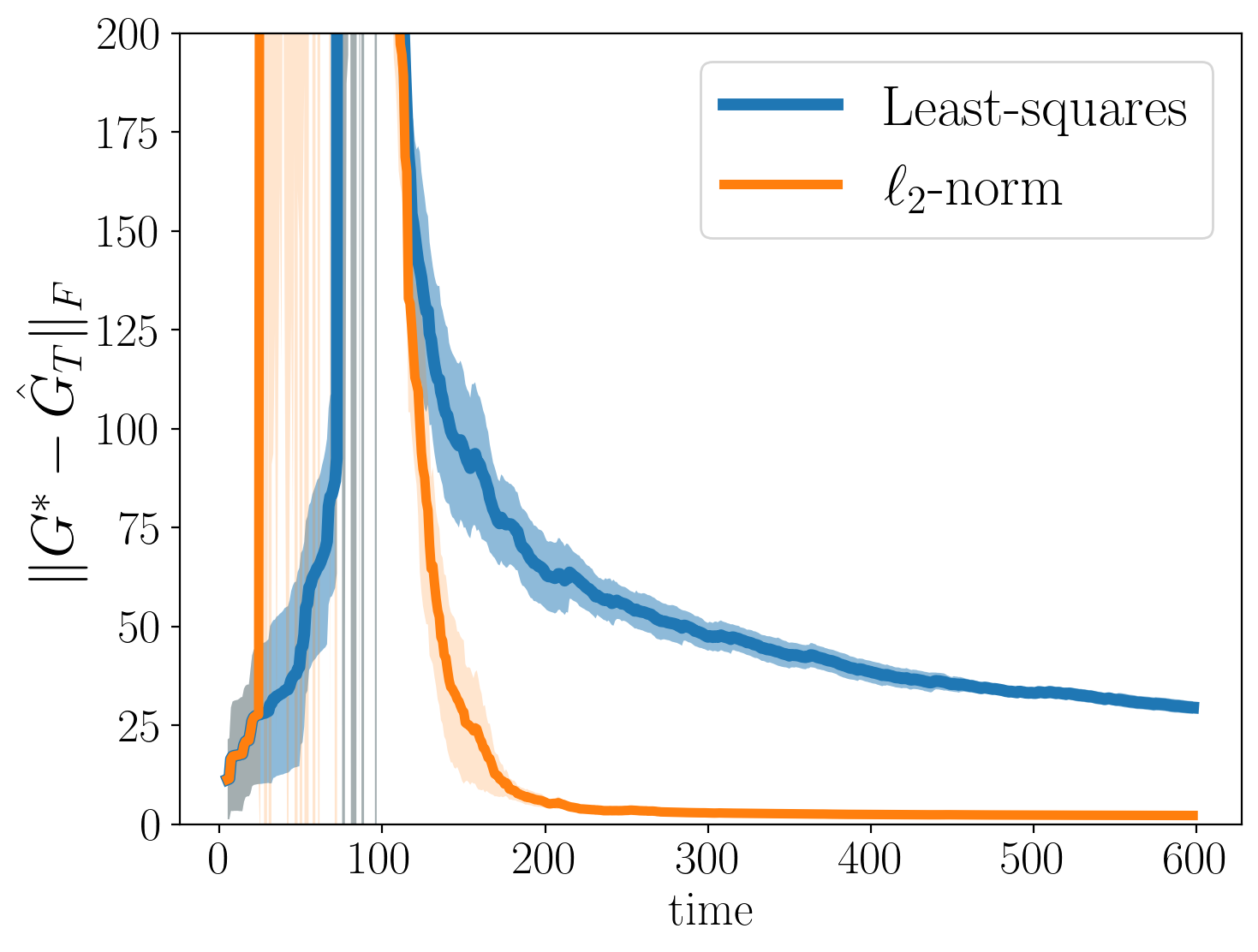}
    \caption{Estimation error of the input-output mapping of nonlinear swing dynamics \eqref{swing} under probabilistic adversarial attacks. }
    \label{fig:power}
\end{figure}

\section{Conclusion}\label{conclusion}
In this paper, we study the identification of the input-output mappings of nonlinear dynamical systems, where control inputs are not necessarily Gaussian and the disturbances are potentially adversarial. We formulate a time-invariant input-output mapping using a linear combination of basis functions taking the input history of length $\tau$, where we decouple the control inputs and disturbances. We propose a problem class that accurately identifies the input-output mapping, characterized by a restriction on the attack probability $p<\frac{1}{2\tau}$. We then prove that the estimation error using the $\ell_2$-norm estimator amounts to $O(\rho^\tau)$ under the presence of probabilistic adversarial attacks and show that this bound is optimal by providing a matching lower bound $\Omega(\rho^\tau)$. Future directions include extending our analysis to a nonparametric approach under the same assumptions, where the estimator inherently involves an infinite-dimensional problem such as optimization over a function class.

\section*{Acknowledgements}

This work was supported by the U. S. Army Research Laboratory and the U. S. Army Research Office under Grant W911NF2010219, Office of Naval Research under Grant N000142412673, and NSF.

\section*{Impact Statement}

This paper provides theoretical guarantees to improve the safety and resilience of learning complex nonlinear systems, such as power grids, autonomous vehicles, and modern cyber-physical systems, under realistic assumptions on the capabilities of adversaries. In particular, it is the first work to establish optimal estimation error bounds in input-output analysis under probabilistic adversarial attacks. Since this work focuses on  foundational theoretical research for system resilience, we do not anticipate any negative societal impacts.

\bibliography{example_paper}
\bibliographystyle{icml2026}

\newpage
\appendix
\onecolumn
\section{Details on Related Works}\label{Related Work}

\textit{Fully and Partially Observed Systems. } In system identification, based on the degree of state observability, systems are often categorized as fully observed and partially observed systems.
In fully observed systems, all states are measured, thus the outputs are
identical to the states. In such systems, numerous methods have been proposed to recover the underlying system, e.g., least-squares methods \citep{simchowitz2018learning, faradonbeh2018finite, jedra2020finite}, $\ell_2$-norm estimator \citep{yalcin2024exact,zhang2024exact}, and $\ell_1$-norm estimator \citep{kim2024prevailing}. However, in many real-world applications—such as robotics \citep{lauri2022survey}, healthcare \citep{ala2014heart}, and safety-critical systems \citep{ben2009sto}—not all states are observable, giving rise to the partially observed system setting. In this case, system identification becomes substantially more challenging. A growing body of research has addressed this challenge: for instance, \cite{sarkar2021finite, oymak2022revisit} used least-squares and \cite{bakshi2023new} used a method-of-moments estimator to identify the system, all under the assumption that disturbances are independent and follow Gaussian or sub-Gaussian distribution with zero-mean. \cite{simchowitz2019semi} extended the least-squares method to setups where the disturbances can be selected by an oblivious adversary with access to past history (but not full information history). However, little research has been conducted in the partially observed systems when the disturbances are  adversarially selected based on full information history. Only recently, \cite{kim2025bridge, kim2025system} investigated system identification using the $\ell_2$-norm or $\ell_1$-norm estimator and allowed adversarial disturbances to leverage full information history, while restricting the number of attack times. Our work adopts a similar assumption in Assumption \ref{attackprob}. 

\textit{Nonparametric and Parametric approaches. } Nonlinear system identification approaches can generally be classified into two broad categories: nonparametric and parametric methods. Nonparametric approaches operate over infinite-dimensional function spaces, often leveraging techniques such as kernel methods and deep learning to model complex system dynamics \citep{greblicki2008nonparametric, ziemann2022traj}. These methods are highly flexible, making them well-suited for capturing behaviors without strong structural assumptions. However, they often come with significant computational overhead and reduced interpretability. In contrast, our approach is based on parametric methods that approximate the system using a finite set of basis functions, typically chosen based on prior knowledge or structural insights. This approach yields models that are more interpretable, computationally efficient, and easier to analyze—especially when the chosen function class aligns well with the true system dynamics \citep{chen1995modeling, giannakis2001bibliography}. Moreover, the parametric framework facilitates model selection and regularization, enabling effective control over model complexity and reducing the risk of overfitting through techniques such as cross-validation or penalization.

\textit{Finite-memory approximation. } 
For a tractable identification of input-output mappings, we adopt a finite-memory approximation strategy with length $\tau$, consistent with classical system identification techniques such as Volterra series truncations \citep{boyd1985fading} and NARMAX models \citep{billings2013nonlinear}. These methods are grounded in the assumption that the dynamics of a nonlinear system can effectively be represented using a fixed window of past inputs. 

\textit{Input-output mapping. }
In many cases, it suffices to focus on the input-output relationship—how control actions affect observable outcomes—rather than attempting to recover the full latent state dynamics \citep{abbeel2006using, deisenroth2011pilco}. Similarly, to identify the input-output mapping, we design the basis functions to depend solely on the control inputs, thereby decoupling the control inputs and disturbances. This separation has proven effective and is widely adopted in various settings. For example, in model-based reinforcement learning (RL), it is common to alternate between system identification and control policy design, where the agent first learns an input-output model of the environment and then uses it to make informed decisions \citep{moerland2023model}.  This simplification is particularly valuable in high-stakes applications like autonomous driving, where control inputs such as throttle and steering are mapped to observations such as heading direction, position, and velocity \citep{paden2016survey}. 

\section{Preliminaries on sub-Gaussian variables}\label{prelimsub}

In this work, we consider both inputs and attacks on the system to be sub-Gaussian variables in which the tail event rarely occurs. We use the definition  given in \cite{vershynin2018high}. 

\begin{definition}[sub-Gaussian scalar variables]\label{subgdef1}
    A random variable $w\in \mathbb{R}$ is called sub-Gaussian if there exists $c>0$ such that 
    \begin{equation}\label{subG}
    \mathbb{E}\Bigr[\exp\Bigr(\frac{w^2}{c^2}\Bigr)\Bigr]\leq 2.
    \end{equation}
    Its sub-Gaussian norm is denoted by $\|w\|_{\psi_2}$ and defined as 
    \begin{equation}\label{norm}
       \|w\|_{\psi_2} = \inf\left\{c>0: \mathbb{E}\Bigr[\exp\Bigr(\frac{w^2}{c^2}\Bigr)\Bigr]\leq 2\right\}. 
    \end{equation}
\end{definition}


Note that the $\psi_2$-norm satisfies properties of norms: positive definiteness, homogeneity, and triangle inequality. 
We have the following properties for a sub-Gaussian variable $w$:
   \begin{subequations}
        \begin{align}
        &\mathbb{E}\left[|w|^k\right]\leq (C_1 \sqrt{k})^k   \quad \forall k = 1,2,\dots, \label{firstprop}\\ 
        & \mathbb{P}(|w|\geq s) \leq 2\exp(- s^2/C_2^2) ,\quad \forall s\geq 0,\label{secondprop}
        \\& \mathbb{E}[\exp(\theta w)]\leq \exp (\theta^2 C_3^2), \quad \forall \theta\in\mathbb{R} \quad \text{if }\mathbb{E}[w] = 0,\label{thirdprop}
    \end{align}  
   \end{subequations}
    where $C_1,C_2, C_3,$ and $\|w\|_{\psi_2}$ are positive  absolute constants that differ from each other by at most an absolute constant factor. For example, there exist $K, \tilde K>0$ such that $c_1, c_2, c_3 \leq K\|w\|_{\psi_2}$ and $\|w\|_{\psi_2} \leq \tilde K \max\{c_1, c_2, c_3 \}$. 
    Note that the property \eqref{secondprop} is also called Hoeffding's inequality, which can be split into two inequalities if $\mathbb{E}[w]=0$:
    \begin{subequations}\label{hoeffdings}
    \begin{align}
      & \mathbb{P}(w\geq s) \leq \exp(-s^2/C_2^2) ,\quad \forall s\geq 0,\label{plus}
        \\&\mathbb{P}(w\leq -s) \leq \exp(- s^2/C_2^2) ,\quad \forall s\geq 0.\label{minus}
    \end{align}
    \end{subequations}

We introduce the following useful lemmas to analyze the sum of independent noncentral sub-Gaussians \citep{vershynin2018high}.

\begin{lemma}[Centering lemma]\label{centering}
    If $w$ is a sub-Gaussian variable satisfying \eqref{subG}, then $w-\mathbb{E}[w]$ is also a sub-Gaussian variable with
    \begin{equation}
        \|w-\mathbb{E}[w]\|_{\psi_2} =O(\|w\|_{\psi_2}).
    \end{equation}
\end{lemma}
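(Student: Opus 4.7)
The plan is to exploit the norm-like structure of $\|\cdot\|_{\psi_2}$ already stated in the paper, together with the first-moment bound in \eqref{firstprop}. Write $w - \mathbb{E}[w] = w + (-\mathbb{E}[w])$ and apply the triangle inequality for the $\psi_2$-norm:
\begin{align*}
\|w - \mathbb{E}[w]\|_{\psi_2} \leq \|w\|_{\psi_2} + \|\mathbb{E}[w]\|_{\psi_2}.
\end{align*}
So the proof reduces to showing that $\|\mathbb{E}[w]\|_{\psi_2} = O(\|w\|_{\psi_2})$, since $\mathbb{E}[w]$ is a deterministic scalar.

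First, compute the $\psi_2$-norm of a deterministic constant $c \in \mathbb{R}$ directly from the definition \eqref{norm}. The condition $\mathbb{E}[\exp(c^2/\alpha^2)] = \exp(c^2/\alpha^2) \leq 2$ holds iff $\alpha \geq |c|/\sqrt{\ln 2}$, hence $\|c\|_{\psi_2} = |c|/\sqrt{\ln 2}$. In particular,
\begin{align*}
\|\mathbb{E}[w]\|_{\psi_2} = \frac{|\mathbb{E}[w]|}{\sqrt{\ln 2}}.
\end{align*}

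Next, control $|\mathbb{E}[w]|$ by $\|w\|_{\psi_2}$. By Jensen's inequality, $|\mathbb{E}[w]| \leq \mathbb{E}[|w|]$, and property \eqref{firstprop} applied with $k=1$ gives $\mathbb{E}[|w|] \leq C_1 \|w\|_{\psi_2}$ (up to the absolute-constant equivalence between $C_1$ and $\|w\|_{\psi_2}$ noted in the paper). Combining the three displays yields
\begin{align*}
\|w - \mathbb{E}[w]\|_{\psi_2} \leq \|w\|_{\psi_2} + \frac{C_1}{\sqrt{\ln 2}} \|w\|_{\psi_2} = O(\|w\|_{\psi_2}),
\end{align*}
which is the claimed bound.

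There is no substantive obstacle here; the only subtlety is making sure each of the three ingredients is legitimately available from the preliminaries section. The triangle inequality and positive homogeneity of $\|\cdot\|_{\psi_2}$ are explicitly asserted just above the lemma statement, property \eqref{firstprop} is listed, and the constant-variable computation is a one-line check from \eqref{norm}. If one preferred to avoid invoking \eqref{firstprop}, an equivalent route is to apply the tail bound \eqref{secondprop} and integrate $\mathbb{E}[|w|] = \int_0^\infty \mathbb{P}(|w| \geq s)\,ds \leq \int_0^\infty 2\exp(-s^2/C_2^2)\,ds = \sqrt{\pi}\,C_2$, giving the same $O(\|w\|_{\psi_2})$ bound on $|\mathbb{E}[w]|$; either route produces the stated conclusion.
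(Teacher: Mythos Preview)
Your proof is correct. The paper does not actually supply its own proof of this lemma; it is stated as a preliminary fact with a citation to \cite{vershynin2018high}, so there is nothing in the paper to compare against beyond that attribution. Your argument---triangle inequality for $\|\cdot\|_{\psi_2}$, the explicit $\psi_2$-norm of a constant, and the first-moment bound from \eqref{firstprop}---is exactly the standard proof given in Vershynin's book (Lemma~2.6.8 there), so you have reproduced precisely the argument the paper defers to.
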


\begin{lemma}[Sum of mean-zero independent sub-Gaussians]\label{sumsub}
   Let $w_1,\dots, w_N$ be independent, mean zero, sub-Gaussian random variables. Then, $\sum_{i=1}^N w_i$ is also sub-Gaussian and its sub-Gaussian norm is $O\bigr((\sum_{i=1}^N \|w_i\|_{\psi_2}^2)^{1/2}\bigr)$. 
\end{lemma}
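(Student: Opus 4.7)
The plan is a classical moment generating function (MGF) argument, exploiting the equivalence (up to absolute constants) between the $\psi_2$-norm and the centered MGF parameter stated in property \eqref{thirdprop}. Concretely, I will (i) pass from $\|w_i\|_{\psi_2}$ to an MGF bound for each $w_i$, (ii) multiply MGFs across independent summands to obtain an MGF bound for $S = \sum_{i=1}^N w_i$, and (iii) convert the resulting MGF bound back into a bound on $\|S\|_{\psi_2}$.

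First, since each $w_i$ is mean-zero sub-Gaussian, property \eqref{thirdprop} (applied with the constants that differ from $\|w_i\|_{\psi_2}$ only by an absolute factor) yields an absolute constant $c>0$ such that
\begin{equation*}
\mathbb{E}[\exp(\theta w_i)] \;\le\; \exp\bigl(c^2 \theta^2 \|w_i\|_{\psi_2}^2\bigr) \qquad \forall \theta \in \mathbb{R},\; i=1,\dots,N.
\end{equation*}
Next, because $w_1,\dots,w_N$ are independent, the MGF of their sum factorizes, so
\begin{equation*}
\mathbb{E}[\exp(\theta S)] \;=\; \prod_{i=1}^N \mathbb{E}[\exp(\theta w_i)] \;\le\; \exp\!\Bigl(c^2 \theta^2 \sum_{i=1}^N \|w_i\|_{\psi_2}^2\Bigr) \;=\; \exp\bigl(c^2\theta^2 \sigma^2\bigr),
\end{equation*}
where $\sigma^2 := \sum_{i=1}^N \|w_i\|_{\psi_2}^2$. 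In particular, $S$ has mean zero and satisfies the centered sub-Gaussian MGF condition with parameter $c\sigma$.

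Finally, I convert the MGF bound into a $\psi_2$-norm bound. A standard Chernoff argument applied to the MGF bound above gives the Hoeffding-type tail inequality $\mathbb{P}(|S|\ge s)\le 2\exp(-s^2/(4c^2\sigma^2))$ for all $s\ge 0$; this is exactly the form in \eqref{secondprop} with constant proportional to $\sigma$. Invoking the equivalence between the tail characterization and the $\psi_2$-norm noted after \eqref{thirdprop} (each of the constants $C_1, C_2, C_3$ is within an absolute factor of $\|\cdot\|_{\psi_2}$), I obtain
\begin{equation*}
\|S\|_{\psi_2} \;\le\; C \cdot \sigma \;=\; C\Bigl(\sum_{i=1}^N \|w_i\|_{\psi_2}^2\Bigr)^{1/2}
\end{equation*}
for some absolute constant $C>0$, which is the claimed $O\!\bigl((\sum_i \|w_i\|_{\psi_2}^2)^{1/2}\bigr)$ bound.

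The argument is essentially routine; the only place where one must be slightly careful is step (iii), where the quantitative conversion from an MGF bound back to a $\psi_2$-norm bound goes through the Chernoff tail estimate and then the moment/Orlicz-norm equivalence. Since the excerpt explicitly records (in \eqref{firstprop}–\eqref{thirdprop}) that all three characterizations agree with $\|\cdot\|_{\psi_2}$ up to universal multiplicative constants, this last step is immediate and no further work beyond tracking absolute constants is required.
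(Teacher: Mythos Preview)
Your proof is correct and is precisely the standard MGF argument from \cite{vershynin2018high}. The paper itself does not prove this lemma; it simply states it as a preliminary result cited from Vershynin's book, so there is nothing further to compare.
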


To provide the analysis of high-dimensional systems, we introduce the notion of sub-Gaussian vectors below. 
\begin{definition}[sub-Gaussian vector variables]\label{subgvec}
    A random vector $w\in\mathbb{R}^d$ is called sub-Gaussian if for every $x\in\mathbb{R}^d$, $w^T x$ is a sub-Gaussian variable. Its norm is defined as
    \begin{equation}\label{normdef}
        \|w\|_{\psi_2} = \sup_{\|x\|_2 \leq 1, x\in\mathbb{R}^d}\|w^T x\|_{\psi_2}. 
    \end{equation}
    For example, if $w$ is a sub-Gaussian vector with a norm $\gamma$, then the sub-Gaussian norm of $\|w\|_2$ is also $\gamma$, considering that $w^T\frac{w}{\|w\|_2} = \|w\|_2$. 
\end{definition}

Throughout the paper, we will assume that the inputs and attacks injected into the system are indeed sub-Gaussian vectors. For example, the $m$-dimensional Gaussian variables and the $r$-dimensional bounded attacks are indeed sub-Gaussian vectors.

We finally define a notion of subexponential, which is essentially a squared sub-Gaussian. 

\begin{lemma}\label{subexp}
    $w$ is sub-Gaussian if and only if $w^2$ is subexponential, and it holds that 
    \begin{align*}
        \|w\|_{\psi_2}^2 = \|w^2\|_{\psi_1},
    \end{align*}
    where the $\psi_1$-norm is defined as
    \begin{align}
        \|w^2\|_{\psi_1} = \inf\left\{c>0: \mathbb{E}\left[\exp\left(\frac{w^2}{c}\right)\right]\leq 2\right\}.
    \end{align}
\end{lemma}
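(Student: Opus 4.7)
The plan is to establish the identity $\|w\|_{\psi_2}^2=\|w^2\|_{\psi_1}$ by a direct change of variable in the two infima, and then read off the iff-statement as an immediate corollary of the fact that finiteness of one norm forces finiteness of the other.

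First, I would unfold both definitions into the sets
\[
A_2:=\{c>0:\mathbb{E}[\exp(w^2/c^2)]\le 2\},\qquad A_1:=\{d>0:\mathbb{E}[\exp(w^2/d)]\le 2\},
\]
so that by \eqref{norm} and the definition stated in the lemma, $\|w\|_{\psi_2}=\inf A_2$ and $\|w^2\|_{\psi_1}=\inf A_1$, with the standard convention $\inf\emptyset=+\infty$.

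Next, I would observe that $\Psi:c\mapsto c^2$ is a strictly increasing bijection of $(0,\infty)$ onto itself and that the two defining inequalities coincide under this substitution: for any $c>0$, the condition $\mathbb{E}[\exp(w^2/c^2)]\le 2$ is literally the condition $\mathbb{E}[\exp(w^2/d)]\le 2$ with $d=\Psi(c)=c^2$. Hence $c\in A_2$ if and only if $c^2\in A_1$, i.e.\ $A_1=\Psi(A_2)$. Because $\Psi$ is strictly monotone and continuous on $(0,\infty)$, it commutes with the infimum, yielding $\inf A_1=\Psi(\inf A_2)=(\inf A_2)^2$, which is exactly $\|w^2\|_{\psi_1}=\|w\|_{\psi_2}^2$.

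For the equivalence portion, I would appeal to Definition \ref{subgdef1}: $w$ is sub-Gaussian iff $\|w\|_{\psi_2}<\infty$, and, by the analogous convention built into the definition of the $\psi_1$-norm in the lemma, $w^2$ is subexponential iff $\|w^2\|_{\psi_1}<\infty$. The identity just proved forces these two finiteness conditions to hold simultaneously, so one property holds iff the other does. The main, and essentially only, technical point is handling the edge case $A_2=\emptyset$, in which both norms equal $+\infty$ by convention and the identity is trivially satisfied; otherwise strict monotonicity of $\Psi$ makes the change of variable immediate, so no genuine obstacle arises.
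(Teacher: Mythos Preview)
Your proof is correct. The paper does not actually supply its own proof of this lemma: it is listed among the preliminary facts on sub-Gaussian variables (attributed to \cite{vershynin2018high}) and is simply stated without argument. Your change-of-variable observation that $c\in A_2\iff c^2\in A_1$ via the monotone bijection $c\mapsto c^2$ is exactly the standard way to see the identity, and the iff-statement then follows immediately from the finiteness equivalence, so there is nothing to add or contrast.
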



\section{Proof of Lemma \ref{separation}}\label{proof}
We first provide the proof of Lemma \ref{separation}, which states that the observation equation can be separated into the input term consisting of $(u_t, \dots, u_{t-\tau})$, the attack term $\bm{W}_{t}^{(\tau)}$, and the oldest state term $\bm{x}_{t}^{(\tau)}$.

\begin{proof}
    We iteratively apply \eqref{Crhostable} to the equation \eqref{expansion} for $k=\tau, \tau-1, \dots, 1$. For $k=\tau$, one can write
    \[
    \|y_t - g(f(\cdots f(f(0, u_{t-\tau}, 0), u_{t-\tau+1},w_{t-\tau+1}), \cdots, u_{t-1}, w_{t-1}), u_t)\|_2 \leq CL\rho^\tau (\|x_{t-\tau}\|_2 + \|w_{t-\tau}\|_2),
    \]
    since the composition of $g$ and $f^{(k)}$ yields a Lipschitz function with Lipschitz constant $L \cdot C\rho^k$. 
    In turn, for $k=\tau-1$, we have
    \begin{align*}
        &\|g(f(\cdots f(f(0, u_{t-\tau}, 0), u_{t-\tau+1},w_{t-\tau+1}), \cdots, u_{t-1}, w_{t-1}), u_t) \\&\hspace{10mm}- g(f(\cdots f(f(0, u_{t-\tau}, 0), u_{t-\tau+1},0), \cdots, u_{t-1}, w_{t-1}), u_t)\|_2 \leq CL\rho^{\tau-1} \|w_{t-\tau+1}\|_2.
    \end{align*}
    Similarly, one can derive the similar inequalities for $k=\tau-2, \dots, 2$ and finally arrive at $k=1$:
    \begin{align*}
        &\|g(f(\cdots f(f(0, u_{t-\tau}, 0), u_{t-\tau+1},0), \cdots, u_{t-1}, w_{t-1}), u_t) \\&\hspace{10mm}- g(f(\cdots f(f(0, u_{t-\tau}, 0), u_{t-\tau+1},0), \cdots, u_{t-1},0), u_t)\|_2 \leq CL\rho \|w_{t-1}\|_2.
    \end{align*}
    Note that $g(f(\cdots f(f(0, u_{t-\tau}, 0), u_{t-\tau+1},0), \cdots, u_{t-1},0), u_t)$ is the auxiliary observation, where the attacks and the oldest state are assumed to be zero. 
    Let $\bar y_t$ denote the difference between the true observation and the auxiliary observation:
    \[
    \bar y_t := y_t - g(f(\cdots f(f(0, u_{t-\tau}, 0), u_{t-\tau+1},0), \cdots, u_{t-1},0), u_t).
    \]
    Then, summing up all the inequalities for $k=\tau, \dots, 1$ and applying the triangle inequality to the left-hand side implies that 
    \begin{align}\label{difference}
       \|\bar y_t\|_2 \leq CL  \rho^\tau \|x_{t-\tau}\|_2+CL   \sum_{k=1}^{\tau}  \rho^k \|w_{t-k}\|_2.
    \end{align}
   Now, we define the following random variables:
    \begin{align}\label{xt}
    \bm{W}_{t}^{(\tau)} := \frac{   \sum_{k=1}^{\tau}  \rho^k \|w_{t-k}\|_2}{  \rho^\tau \|x_{t-\tau}\|_2+   \sum_{k=1}^{\tau}  \rho^k \|w_{t-k}\|_2} \bar y_t, \quad \bm{x}_{t}^{(\tau)}:= \frac{  \rho^\tau \|x_{t-\tau}\|_2}{  \rho^\tau \|x_{t-\tau}\|_2+   \sum_{k=1}^{\tau}  \rho^k \|w_{t-k}\|_2} \bar y_t.
\end{align}
Notice that $\bar y_t = \bm{W}_{t}^{(\tau)} +\bm{x}_{t}^{(\tau)}$. This implies that $\|\bar y_t\|_2 = \|\bm{W}_{t}^{(\tau)} +\bm{x}_{t}^{(\tau)}\|_2 \leq \|\bm{W}_{t}^{(\tau)} \|_2 + \|\bm{x}_{t}^{(\tau)}\|_2$, where each term $\|\bm{W}_{t}^{(\tau)} \|_2$ and $\|\bm{x}_{t}^{(\tau)}\|_2$ is bounded by the quantity in the lemma due to \eqref{difference} and \eqref{xt}.
\end{proof}

\section{Proof of Theorem \ref{main}}\label{proofc}

For convenience, we define $\mathbb{I}_{\pm}(\cdot)$ as the indicator function that equals $1$ if the event occurs and $-1$ otherwise.

\begin{theorem}[Restatement of Theorem \ref{suffaux}]\label{sufff}
  Suppose that $\bm{x}_t^{(\tau)} = 0$ and $\bm{\epsilon}_t=0$ for all $t$. Then, $G^*$ is the unique solution to the $\ell_2$-norm estimator \eqref{argmin} if 
    \begin{align}\label{auxsuff}
    \sum_{t=\tau}^{T-1} \mathbb{I}_{\pm}\{\bm{W}_t^{(\tau)}=0\} \cdot \bigr\|Z \Phi(\bm{U}_t^{(\tau)})\bigr\|_2   >0
    \end{align}
    holds for all $Z\in\mathbb{R}^{r\times M}$ such that $\|Z\|_F=1$.
\end{theorem}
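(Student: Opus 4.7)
The plan is to show directly that for any $G \neq G^*$, the objective of the $\ell_2$-norm estimator strictly exceeds its value at $G^*$, by using the condition \eqref{auxsuff} as the source of the strict inequality after a triangle-inequality decomposition.

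First I would substitute the assumption $\bm{x}_t^{(\tau)} = 0$, $\bm{\epsilon}_t = 0$ into \eqref{relationshipphi} to get the clean representation $y_t = G^* \Phi(\bm{U}_t^{(\tau)}) + \bm{W}_t^{(\tau)}$. Fix an arbitrary $G \neq G^*$ and write $G^* - G = s Z$, where $s = \|G^* - G\|_F > 0$ and $Z \in \mathbb{R}^{r \times M}$ with $\|Z\|_F = 1$. Then the per-time objective at $G$ becomes
\begin{align*}
\|y_t - G\Phi(\bm{U}_t^{(\tau)})\|_2 = \|s Z\Phi(\bm{U}_t^{(\tau)}) + \bm{W}_t^{(\tau)}\|_2,
\end{align*}
while at $G^*$ it is simply $\|\bm{W}_t^{(\tau)}\|_2$.

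Next I would split the sum over $t$ according to whether $\bm{W}_t^{(\tau)} = 0$. On times where $\bm{W}_t^{(\tau)} = 0$, the difference between the objective at $G$ and at $G^*$ is exactly $s\|Z\Phi(\bm{U}_t^{(\tau)})\|_2$. On times where $\bm{W}_t^{(\tau)} \neq 0$, the reverse triangle inequality yields
\begin{align*}
\|sZ\Phi(\bm{U}_t^{(\tau)}) + \bm{W}_t^{(\tau)}\|_2 - \|\bm{W}_t^{(\tau)}\|_2 \geq -\, s\|Z\Phi(\bm{U}_t^{(\tau)})\|_2.
\end{align*}
Summing both contributions, the total difference between the objective at $G$ and at $G^*$ is lower-bounded by
\begin{align*}
s \sum_{t=\tau}^{T-1} \mathbb{I}_{\pm}\{\bm{W}_t^{(\tau)} = 0\} \cdot \|Z \Phi(\bm{U}_t^{(\tau)})\|_2,
\end{align*}
which is strictly positive by hypothesis \eqref{auxsuff} applied to the unit-norm matrix $Z$. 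Since $s>0$, the objective at $G$ strictly exceeds that at $G^*$, proving both that $G^*$ is a minimizer and that it is unique.

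There is essentially no analytic obstacle here: the content of the theorem is the clean decomposition of the $\ell_2$-norm objective into a ``good'' part (clean times where the gap equals $s\|Z\Phi\|_2$) and a ``bad'' part (attack times where the reverse triangle inequality costs at most $s\|Z\Phi\|_2$). The only subtlety is ensuring that the reverse triangle inequality is tight enough in the worst case; it is, because $\bm{W}_t^{(\tau)}$ can be arbitrary in direction and magnitude, so no better bound than $-s\|Z\Phi\|_2$ is available in general without further assumptions. The entire argument is a rewriting exercise, and all the heavy lifting in the overall identification result is pushed into verifying \eqref{auxsuff} with high probability, which is addressed elsewhere in the analysis outlined in Section \ref{without}.
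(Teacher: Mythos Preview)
Your proof is correct and follows essentially the same approach as the paper: both substitute the clean representation, split according to $\bm{W}_t^{(\tau)}=0$ versus $\bm{W}_t^{(\tau)}\neq 0$, apply the reverse triangle inequality on the attacked times, and use the positive homogeneity in $\|G^*-G\|_F$ to reduce to unit-norm $Z$. The only cosmetic difference is that the paper first restricts to a local neighborhood $0<\|\Delta\|_F\leq \bar\Delta$ and appeals to ``strict local minimum implies unique global minimum'' for convex problems before normalizing, whereas you directly establish the strict inequality for every $G\neq G^*$; your route is in fact slightly more direct since it bypasses the convexity remark entirely.
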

\begin{proof}
Since $\bm{x}_t^{(\tau)}=0$ and $\bm{\epsilon}_t=0$, an equivalent condition for $G^*$ to be the unique solution of the convex optimization problem \eqref{argmin} is the existence of some $\bar \Delta >0$ such that
\begin{align}\label{diff1}
\sum_{t=\tau}^{T-1} \|\bm{W}_t^{(\tau)}\|_2 < \sum_{t=\tau}^{T-1} \|\Delta\cdot \Phi(\bm{U}_t^{(\tau)}) + \bm{W}_t^{(\tau)}\|_2, \quad\forall \Delta\in\mathbb{R}^{r\times M}: 0<\|\Delta\|_F \leq \bar \Delta,
\end{align}
  since a strict local minimum in convex problems implies the unique global minimum. 
   Observe that we have 
   \begin{align*}
        &\sum_{t=\tau}^{T-1} \|\Delta\cdot \Phi(\bm{U}_t^{(\tau)}) + \bm{W}_t^{(\tau)}\|_2- \sum_{t=\tau}^{T-1} \|\bm{W}_t^{(\tau)}\|_2 \\&\hspace{1mm}= \sum_{t=\tau}^{T-1} \mathbb{I}\{\bm{W}_t^{(\tau)}=0\}\cdot \|\Delta\cdot \Phi(\bm{U}_t^{(\tau)})\|_2 + \mathbb{I}\{\bm{W}_t^{(\tau)}\neq 0\}\cdot \left(\|\Delta\cdot \Phi(\bm{U}_t^{(\tau)}) + \bm{W}_t^{(\tau)}\|_2- \|\bm{W}_t^{(\tau)}\|_2\right)\\&\hspace{1mm}\geq \sum_{t=\tau}^{T-1} \mathbb{I}\{\bm{W}_t^{(\tau)}=0\}\cdot \|\Delta\cdot \Phi(\bm{U}_t^{(\tau)})\|_2 + \mathbb{I}\{\bm{W}_t^{(\tau)}\neq 0\}\cdot \left(-\|\Delta \cdot\Phi(\bm{U}_t^{(\tau)})\|_2 \right) \\&\hspace{1mm}= \sum_{t=\tau}^{T-1} \mathbb{I}_{\pm}\{\bm{W}_t^{(\tau)}=0\} \cdot \bigr\|\Delta \cdot  \Phi(\bm{U}_t^{(\tau)})\bigr\|_2.\numberthis\label{homogen}
   \end{align*}

 Thus, a sufficient condition for \eqref{diff1} is to satisfy 
    \begin{align}\label{diff2}
  \sum_{t=\tau}^{T-1} \mathbb{I}_{\pm}\{\bm{W}_t^{(\tau)}=0\} \cdot \bigr\|\Delta \cdot  \Phi(\bm{U}_t^{(\tau)})\bigr\|_2>0, \quad \forall \Delta\in\mathbb{R}^{r\times M}: 0<\|\Delta\|_F \leq \bar \Delta.
\end{align}
For each $\Delta$, dividing both sides of \eqref{diff2} by $\|\Delta\|_F >0$ leads to
the set of inequalities in \eqref{auxsuff}.
\end{proof}


We will now analyze the sub-Gaussian norm of $\|\bm{U}_t^{(\tau)}\|_2$. 

\begin{lemma}\label{lemmasubgg}
    Under Assumption \ref{subinputs}, we have $\bigr\|\|\bm{U}_t^{(\tau)}\|_2\bigr\|_{\psi_2} \leq \sqrt{\tau+1} \cdot \sigma_u$. 
\end{lemma}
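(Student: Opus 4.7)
The plan is to reduce $\bigr\|\|\bm{U}_t^{(\tau)}\|_2\bigr\|_{\psi_2}$ to a supremum over unit test vectors via Definition \ref{subgvec} and then conclude using only the triangle inequality of the $\psi_2$-norm and Cauchy--Schwarz. By the observation at the end of Definition \ref{subgvec}, $\bigr\|\|\bm{U}_t^{(\tau)}\|_2\bigr\|_{\psi_2}$ equals the sub-Gaussian vector norm $\|\bm{U}_t^{(\tau)}\|_{\psi_2} = \sup_{\|x\|_2\le 1}\|(\bm{U}_t^{(\tau)})^T x\|_{\psi_2}$, with $x\in\mathbb{R}^{m(\tau+1)}$. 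Any such unit $x$ admits a block decomposition $x=(x_0,x_1,\dots,x_\tau)$ with $x_k\in\mathbb{R}^m$ and $\sum_{k=0}^{\tau}\|x_k\|_2^2\le 1$, giving $(\bm{U}_t^{(\tau)})^T x=\sum_{k=0}^{\tau}u_{t-k}^T x_k$.

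For each block, Definition \ref{subgvec} applied to $u_{t-k}$, combined with the homogeneity of $\psi_2$ and Assumption \ref{subinputs}, yields $\|u_{t-k}^T x_k\|_{\psi_2}\le \|x_k\|_2\cdot \|u_{t-k}\|_{\psi_2}\le \|x_k\|_2\cdot\sigma_u$. Applying the triangle inequality of the $\psi_2$-norm (noted in Appendix \ref{prelimsub}) to the above sum and then Cauchy--Schwarz on the resulting $\ell_1$-type expression gives
\begin{align*}
\bigr\|(\bm{U}_t^{(\tau)})^T x\bigr\|_{\psi_2} &\le \sum_{k=0}^{\tau}\|u_{t-k}^T x_k\|_{\psi_2} \le \sigma_u\sum_{k=0}^{\tau}\|x_k\|_2 \\
&\le \sigma_u\sqrt{\tau+1}\,\sqrt{\sum_{k=0}^{\tau}\|x_k\|_2^2} \le \sqrt{\tau+1}\,\sigma_u.
\end{align*}
Taking the supremum over unit $x$ on the left-hand side finishes the proof.

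Crucially, this argument does \emph{not} invoke the independence of the $u_t$ nor any mean-zero property; it uses only the norm axioms of $\psi_2$, the block structure of $\bm{U}_t^{(\tau)}$, and Cauchy--Schwarz. Consequently, there is no genuinely delicate step — the entire ``difficulty'' is bookkeeping to keep the ambient dimensions $m$, $\tau+1$, and the definition of the sub-Gaussian vector norm aligned. In contrast with the neighboring auxiliary results in Appendix \ref{prelimsub} such as Lemma \ref{sumsub}, which crucially need independence and centering, this lemma follows essentially in one line from triangle inequality alone, and the factor $\sqrt{\tau+1}$ is exactly the Cauchy--Schwarz price for passing from the $\ell_2$ constraint on $(x_k)$ to an $\ell_1$ bound.
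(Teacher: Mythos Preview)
Your proof is correct and reaches the same constant, but it takes a different route than the paper. The paper squares and passes through the subexponential $\psi_1$ norm via Lemma~\ref{subexp}: from $\bigl\|\|u_{t-i}\|_2\bigr\|_{\psi_2}\le\sigma_u$ one gets $\bigl\|\|u_{t-i}\|_2^2\bigr\|_{\psi_1}\le\sigma_u^2$, then applies the $\psi_1$ triangle inequality to the exact identity $\|\bm{U}_t^{(\tau)}\|_2^2=\sum_{i=0}^\tau\|u_{t-i}\|_2^2$ to obtain $(\tau+1)\sigma_u^2$, and converts back. You instead stay entirely in $\psi_2$: you invoke the identification in Definition~\ref{subgvec} between $\bigl\|\|w\|_2\bigr\|_{\psi_2}$ and the vector norm $\|w\|_{\psi_2}$, and bound the latter by a blockwise triangle inequality plus Cauchy--Schwarz over unit test vectors. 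The paper's route exploits that the squared norm decomposes \emph{additively}, so the factor $\tau+1$ appears as a pure count of summands with no slack; your route is arguably more elementary (no $\psi_1$ detour) and uses the vector-norm bound on each $u_{t-k}$ straight from Assumption~\ref{subinputs}, with $\sqrt{\tau+1}$ arising as the Cauchy--Schwarz price. Both arguments ultimately lean on the same remark in Definition~\ref{subgvec} equating $\bigl\|\|w\|_2\bigr\|_{\psi_2}$ with the vector $\psi_2$ norm of $w$---the paper applies it to each $u_{t-i}$ at the start, while you apply it once to the stacked vector $\bm{U}_t^{(\tau)}$ at the end.
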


\begin{proof}
We will use Lemma \ref{subexp}, which connects sub-Gaussian and subexponential variables. Since the sub-Gaussian norms of $\|u_t\|_2$ for all $t$ are bounded by $\sigma_u$, we equivalently have 
\begin{align*}
    \left\|\|u_t\|_2^2\right\|_{\psi_1}\leq \sigma_u^2, \quad \forall t\geq 0.
\end{align*}
It follows that 
\begin{align*}
    \left\|\|\bm{U}_t^{(\tau)}\|_2^2\right\|_{\psi_1} = \left\|\sum_{i=0}^{\tau}\|u_{t-i}\|_2^2\right\|_{\psi_1}\leq \sum_{i=0}^{\tau}\left\|\|u_{t-i}\|_2^2\right\|_{\psi_1}\leq (\tau+1)\sigma_u^2. 
\end{align*}
We again hinge on Lemma \ref{subexp} to arrive at the conclusion.
\end{proof}

\begin{lemma}\label{firstlem}
    Suppose that Assumptions  \ref{basis}, \ref{subinputs}, and \ref{attackprob} hold. Define $\nu:= \frac{\sqrt{M\tau}  L_\phi \sigma_u}{\lambda}$. Then, for a fixed $Z\in\mathbb{R}^{r\times M}$ such that $\|Z\|_F=1$, we have
    \begin{align}\label{firstterm}
        \mathbb{E}\left[\mathbb{I}_{\pm}\{\bm{W}_t^{(\tau)}=0\} \cdot \bigr\|Z \Phi(\bm{U}_t^{(\tau)})\bigr\|_2 \right] = \Omega\left(\frac{ (2(1-p)^\tau-1) \cdot \lambda}{\nu^3}\right).
    \end{align}
\end{lemma}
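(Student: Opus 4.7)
The plan is to attack the lemma in two stages: first I would decouple the signed indicator from the basis term so that what remains is the unconditional expectation of $V := \|Z\Phi(\bm{U}_t^{(\tau)})\|_2 \geq 0$; then I would lower bound $\mathbb{E}[V]$ by interpolating the excitation bound on $\mathbb{E}[V^2]$ against an upper bound on $\mathbb{E}[V^3]$.

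\textbf{Decoupling the attack indicator.} First I would introduce the event $E_\xi := \{\xi_{t-1} = \cdots = \xi_{t-\tau} = 0\}$. By \eqref{include} and the explicit formula for $\bm{W}_t^{(\tau)}$ in the proof of Lemma \ref{separation}, this event is contained in $\{\bm{W}_t^{(\tau)} = 0\}$, and it depends only on the Bernoulli clocks $\xi_{t-\tau},\dots,\xi_{t-1}$, which are independent of the input stack $\bm{U}_t^{(\tau)} = (u_t,\dots,u_{t-\tau})$. Since $\mathbb{I}_\pm\{\bm{W}_t^{(\tau)}=0\} \geq 2\mathbb{I}_{E_\xi}-1$ pointwise and $V \geq 0$, the independence of $E_\xi$ and $V$ gives
\begin{align*}
\mathbb{E}\bigr[\mathbb{I}_\pm\{\bm{W}_t^{(\tau)}=0\}\cdot V\bigr] \geq (2\mathbb{P}(E_\xi)-1)\,\mathbb{E}[V] \geq (2(1-p)^\tau - 1)\,\mathbb{E}[V],
\end{align*}
reducing the lemma to showing $\mathbb{E}[V] = \Omega(\lambda/\nu^3)$.

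\textbf{Moment computations.} Next I would control two moments of $V$. The excitation condition \eqref{basisexcite} together with $\|Z\|_F = 1$ immediately gives $\mathbb{E}[V^2] = \mathrm{tr}\bigr(Z\,\mathbb{E}[\Phi(\bm{U}_t^{(\tau)})\Phi(\bm{U}_t^{(\tau)})^T]\,Z^T\bigr) \geq \lambda^2$. For the third moment, Lipschitzness \eqref{basislipschitz} combined with $\Phi(0)=0$ yields $\|\Phi(\bm{U}_t^{(\tau)})\|_2 \leq \sqrt{M}\,L_\phi\,\|\bm{U}_t^{(\tau)}\|_2$, so $V \leq \sqrt{M}\,L_\phi\,\|\bm{U}_t^{(\tau)}\|_2$. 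Using Lemma \ref{lemmasubgg} to bound the sub-Gaussian norm of $\|\bm{U}_t^{(\tau)}\|_2$ by $\sqrt{\tau+1}\,\sigma_u$ together with the standard moment estimate \eqref{firstprop}, I would obtain $\mathbb{E}[V^3] = O(M^{3/2}L_\phi^3\tau^{3/2}\sigma_u^3) = O(\lambda^3\nu^3)$, after substituting the definition $\nu = \sqrt{M\tau}\,L_\phi\,\sigma_u/\lambda$.

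\textbf{Interpolation and the main obstacle.} The final step, and the one I expect to be the most delicate, is passing from second-moment information to a first-moment lower bound. My trick would be Cauchy-Schwarz on the factorization $V^2 = V^{1/2}\cdot V^{3/2}$, which yields $\mathbb{E}[V^2]^2 \leq \mathbb{E}[V]\cdot\mathbb{E}[V^3]$, hence
\begin{align*}
\mathbb{E}[V] \geq \frac{\mathbb{E}[V^2]^2}{\mathbb{E}[V^3]} = \Omega\left(\frac{\lambda^4}{\lambda^3\nu^3}\right) = \Omega\left(\frac{\lambda}{\nu^3}\right),
\end{align*}
which combined with the decoupling step closes the proof. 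The hard part will be choosing the right pair of moments to interpolate between: a direct Paley-Zygmund using $\mathbb{E}[V^2]$ and $\mathbb{E}[V^4]$ would give only $\Omega(\lambda/\nu^4)$, losing a factor of $\nu$ relative to the target, so the key insight is to route the interpolation through the \emph{third} moment rather than the fourth, which is exactly what matches the prescribed exponent.
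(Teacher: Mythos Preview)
Your proposal is correct and follows essentially the same approach as the paper: the paper also decouples via the inclusion $\{\bm{W}_t^{(\tau)}=0\}\supseteq\{\xi_{t-1}=\cdots=\xi_{t-\tau}=0\}$ and independence of the Bernoulli clocks from the inputs, lower-bounds $\mathbb{E}[V^2]\geq\lambda^2$ from \eqref{basisexcite}, upper-bounds $\mathbb{E}[V^3]$ via the sub-Gaussian norm estimate of Lemma~\ref{lemmasubgg} and \eqref{firstprop}, and closes with the same Cauchy--Schwarz interpolation $\mathbb{E}[V^2]^2\leq\mathbb{E}[V]\,\mathbb{E}[V^3]$. The only cosmetic differences are the order of presentation (you decouple first, the paper last) and that the paper bounds the sub-Gaussian norm of $V$ column-by-column rather than via $\|Z\Phi\|_2\leq\|Z\|_F\|\Phi\|_2$ as you do.
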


\begin{proof}
We first analyze the sub-Gaussian norm of $\|Z \Phi(\bm{U}_t^{(\tau)})\|_2$. 
From \eqref{basislipschitz} in Assumption \ref{basis} with $\Phi(0)=0$, we have \begin{align}\label{Lphi}|\phi_i(\bm{U}_t^{(\tau)})|=|\phi_i(\bm{U}_t^{(\tau)})-\phi_i(0)|\leq L_\phi \|\bm{U}_t^{(\tau)}\|_2.\end{align} 
Due to Lemma \ref{lemmasubgg}, it follows that \[\left\||\phi_i(\bm{U}_t^{(\tau)})|\right\|_{\psi_2} \leq L_\phi\left\|\|\bm{U}_t^{(\tau)} \|_2\right\|_{\psi_2}=L_\phi \sqrt{\tau+1}\cdot \sigma_u\]
 Thus, one can obtain
\begin{align}\label{normsphi}
   \nonumber \left\| \|Z \Phi(\bm{U}_t^{(\tau)})\|_2 \right\|_{\psi_2} &\leq \left\| \sum_{i=1}^M \|z_i\phi_i(\bm{U}_t^{(\tau)})\|_2\right\|_{\psi_2}\leq \sum_{i=1}^M \left\|\|z_i \phi_i(\bm{U}_t^{(\tau)})\|_2\right\|_{\psi_2} \\&= \sum_{i=1}^M \|z_i\|_2 \left\||\phi_i(\bm{U}_t^{(\tau)})|\right\|_{\psi_2} \leq \sqrt{M} L_\phi\sqrt{\tau+1}\cdot  \sigma_u,
\end{align}
where $z_i$ is the $i^\text{th}$ column of $Z$. The inequalities follow from the triangle inequality and the Cauchy-Schwarz inequality.

Then, due to the property \eqref{firstprop}, we have $\mathbb{E}\left[\|Z \Phi(\bm{U}_t^{(\tau)})\|_2^3\right]=O((\sqrt{M\tau} L_\phi  \sigma_u)^3)$.  

 From \eqref{basisexcite} in Assumption \ref{basis}, we have 
 \begin{align*}
 \mathbb{E}\left[\|Z \Phi(\bm{U}_t^{(\tau)})\|_2^2\right] &= \mathbb{E}[\text{trace}( Z^T Z \cdot \Phi(\bm{U}_t^{(\tau)})\Phi(\bm{U}_t^{(\tau)})^T )] \\&= \text{trace}( Z^T Z \cdot \mathbb{E}[\Phi(\bm{U}_t^{(\tau)})\Phi(\bm{U}_t^{(\tau)})^T ])  \geq \lambda^2 \cdot \text{trace}(Z^TZ) = \lambda^2.\numberthis\label{lambdamin}
 \end{align*}
Note that 
\begin{align}\label{lambound}
\mathbb{E}\left[\|Z \Phi(\bm{U}_t^{(\tau)})\|_2^2\right]^2 \leq \mathbb{E}\left[\|Z \Phi(\bm{U}_t^{(\tau)})\|_2\right] \cdot \mathbb{E}\left[\|Z \Phi(\bm{U}_t^{(\tau)})\|_2^3\right]
\end{align}
due to the Cauchy-Schwarz inequality. Combining the above two inequalities yields 
\begin{align}\label{lowerexp}
    \mathbb{E}\left[\|Z \Phi(\bm{U}_t^{(\tau)})\|_2\right]=\Omega\Bigr(\frac{\lambda^4}{(\sqrt{M\tau} L_\phi  \sigma_u)^3}\Bigr).
\end{align}

Now,  recall the relationship 
\begin{align*}
    \{\bm{W}_t^{(\tau)}= 0\}\supseteq \{w_{t-1}= 0, \dots, w_{t-\tau} = 0 \}\supseteq \{\xi_{t-1}= 0, \dots, \xi_{t-\tau} = 0 \},
\end{align*}
which follows from Assumption \ref{attackprob}. From the independence of $\xi_i$'s, we also have 
\begin{align*}
    \mathbb{P}(\xi_{t-1}= 0, \dots, \xi_{t-\tau} = 0) = (1-p)^\tau >0.5,
\end{align*}
since $p<\frac{1}{2\tau}$. Then, one can write
 \begin{align*}
  \mathbb{E}\left[\mathbb{I}_{\pm}\{\bm{W}_t^{(\tau)}=0\} \cdot \bigr\|Z \Phi(\bm{U}_t^{(\tau)})\bigr\|_2 \right]
&\geq     \mathbb{E}\left[\mathbb{I}_{\pm}\{\xi_{t-1}= 0, \dots, \xi_{t-\tau} = 0\} \cdot \bigr\|Z \Phi(\bm{U}_t^{(\tau)})\bigr\|_2 \right]\numberthis\label{40}\\&= \mathbb{E}\left[\mathbb{I}_{\pm}\{\xi_{t-1}= 0, \dots, \xi_{t-\tau} = 0\}\right] \cdot \mathbb{E}\left[\bigr\|Z \Phi(\bm{U}_t^{(\tau)})\bigr\|_2 \right]\\&\geq ((1-p)^\tau- (1-(1-p)^\tau)\cdot \mathbb{E}\left[\bigr\|Z \Phi(\bm{U}_t^{(\tau)})\bigr\|_2 \right] \\&= (2(1-p)^\tau-1)\cdot \Omega\Bigr(\frac{\lambda^4}{(\sqrt{M\tau} L_\phi  \sigma_u)^3}\Bigr).\numberthis\label{finalexxx}
 \end{align*}
We finally note that the term given in \eqref{finalexxx} is indeed positive since $(1-p)^\tau>0.5$. Using the definition of $\nu$ completes the proof.
\end{proof}

We have defined $\nu$ in the above lemma. We will show that the value of $\nu$ is bounded below by a positive constant.
\begin{lemma}\label{lowernu}
    Define $\nu:= \frac{\sqrt{M\tau} L_\phi \sigma_u}{\lambda}$. Then, $\nu=\Omega(1)$. 
\end{lemma}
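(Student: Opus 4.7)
The plan is to upper bound $\lambda$ by a constant multiple of $\sqrt{M(\tau+1)}\,L_\phi\sigma_u$, after which $\nu = \Omega(1)$ follows by rearrangement. The key observation is that the excitation condition \eqref{basisexcite} lower bounds the second moment of $\|Z\Phi(\bm{U}_t^{(\tau)})\|_2$ for every unit‑Frobenius‑norm $Z$, while Assumptions \ref{basis} and \ref{subinputs} supply a matching sub‑Gaussian upper bound on that same quantity; so the two bounds must be compatible.

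First, I would fix any $Z \in \mathbb{R}^{r\times M}$ with $\|Z\|_F = 1$ and recall the inequality
\[
\mathbb{E}\!\left[\|Z\Phi(\bm{U}_t^{(\tau)})\|_2^{\,2}\right] \ \geq\ \lambda^2,
\]
which is exactly \eqref{lambdamin}. Next, I would invoke the sub‑Gaussian bound \eqref{normsphi} already established in the proof of Lemma \ref{firstlem}, namely
\[
\bigl\|\,\|Z\Phi(\bm{U}_t^{(\tau)})\|_2\,\bigr\|_{\psi_2}\ \leq\ \sqrt{M}\,L_\phi\sqrt{\tau+1}\,\sigma_u,
\]
which is a direct consequence of the $L_\phi$‑Lipschitzness of each $\phi_i$, the identity $\Phi(0)=0$, Lemma \ref{lemmasubgg}, and the triangle inequality for $\|\cdot\|_{\psi_2}$.

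The second step is to convert this sub‑Gaussian norm bound into a second‑moment bound via the moment property \eqref{firstprop} with $k=2$, yielding
\[
\mathbb{E}\!\left[\|Z\Phi(\bm{U}_t^{(\tau)})\|_2^{\,2}\right]\ \leq\ C_1^{\,2}\cdot 2\cdot M(\tau+1)L_\phi^{\,2}\sigma_u^{\,2}
\]
for the absolute constant $C_1$ from \eqref{firstprop}. Chaining this with the lower bound $\lambda^2$ from \eqref{lambdamin} gives $\lambda^2 \leq 2C_1^{\,2}\,M(\tau+1)L_\phi^{\,2}\sigma_u^{\,2}$, or equivalently
\[
\lambda\ \leq\ \sqrt{2}\,C_1\cdot\sqrt{M(\tau+1)}\,L_\phi\sigma_u.
\]
Dividing both sides of this inequality into $\sqrt{M\tau}\,L_\phi\sigma_u$ and using $\tau \geq 1$ so that $\sqrt{\tau/(\tau+1)} \geq 1/\sqrt{2}$, I obtain
\[
\nu\ =\ \frac{\sqrt{M\tau}\,L_\phi\sigma_u}{\lambda}\ \geq\ \frac{1}{\sqrt{2}\,C_1}\cdot\sqrt{\frac{\tau}{\tau+1}}\ \geq\ \frac{1}{2C_1},
\]
which is the desired $\Omega(1)$ lower bound. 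There is no real obstacle here: all of the needed bounds already appear in the proof of Lemma \ref{firstlem}, and the argument is just ``excitation forces $\lambda$ to be no larger than the natural sub‑Gaussian scale of $\Phi(\bm{U}_t^{(\tau)})$, which is itself $O(\sqrt{M\tau}\,L_\phi\sigma_u)$''. The only mildly delicate point is that one must verify property \eqref{firstprop} is being applied to the scalar random variable $\|Z\Phi(\bm{U}_t^{(\tau)})\|_2$, whose sub‑Gaussian norm was already controlled in \eqref{normsphi}; no new distributional assumption is required.
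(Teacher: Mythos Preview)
Your proposal is correct and follows essentially the same approach as the paper: both arguments lower bound $\mathbb{E}\bigl[\|Z\Phi(\bm{U}_t^{(\tau)})\|_2^2\bigr]$ by $\lambda^2$ via the excitation condition and upper bound it by $O(M\tau L_\phi^2\sigma_u^2)$ via the Lipschitz and sub-Gaussian assumptions, then rearrange. The only cosmetic difference is that the paper bounds $\|Z\Phi(\bm{U}_t^{(\tau)})\|_2^2 \leq M L_\phi^2\|\bm{U}_t^{(\tau)}\|_2^2$ pointwise and then takes expectations using Lemma~\ref{subexp}, whereas you pass through the $\psi_2$-norm bound \eqref{normsphi} and then apply \eqref{firstprop}; both routes yield the same conclusion.
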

\begin{proof}
    For any $Z\in \mathbb{R}^{r\times M}$ such that $\|Z\|_F=1$, we have
    \begin{align}\label{36}
         \|Z\Phi(\bm{U}_t^{(\tau)})\|_2^2 \leq \|Z\|_F^2 \cdot \|\Phi(\bm{U}_t^{(\tau)})\|_2^2 = \|\Phi(\bm{U}_t^{(\tau)})\|_2^2 \leq ML_\phi^2 \|\bm{U}_t^{(\tau)}\|_2^2,
    \end{align}
    where the last inequality comes from \eqref{Lphi}. The expectation of the left-hand side of \eqref{36} is lower-bounded by $\lambda^2$ due to  \eqref{lambound}. Noting that the expectation and the $\psi_2$-norm of a nonnegative variable have the same order (see  \eqref{firstprop}), the expectation of the right-hand side is upper-bounded by $O(ML_\phi^2 \tau\sigma_u^2)$ due to Lemma \ref{subexp}. Thus, we have $ \frac{ML_\phi^2 \tau\sigma_u^2}{\lambda} =\Omega(1)$; in other words, $\nu^2 = \Omega(1)$. This completes the proof.
\end{proof}

Now, we provide the crucial lemma to ensure that for a fixed $Z$, the term $\mathbb{I}_{\pm}\{\bm{W}_t^{(\tau)}=0\} \cdot \bigr\|Z \Phi(\bm{U}_t^{(\tau)})\bigr\|_2$ is positive with probability at least $1-\delta$. 

\begin{lemma}\label{secondlem}
    Suppose that Assumptions \ref{basis} and \ref{subinputs} hold. Define $\nu:= \frac{\sqrt{M\tau} L_\phi \sigma_u}{\lambda}$. Given $ \delta \in (0,1]$,  when 
    \begin{align}\label{fixedlowerboundtime}
        T=\Omega\left( \frac{\tau \nu^8}{(2(1-p)^\tau-1)^2}\log\Bigr(\frac{1}{\delta}\Bigr)\right),
    \end{align}
we have    
    \begin{align}\label{secondterm}
       \sum_{t=\tau}^{T-1}\mathbb{I}_{\pm}\{\bm{W}_t^{(\tau)}=0\} \cdot \bigr\|Z \Phi(\bm{U}_t^{(\tau)})\bigr\|_2 = \Omega\left(\frac{ (2(1-p)^\tau-1) \cdot \lambda T}{2\nu^3} \right)
    \end{align}
    for a fixed $Z\in\mathbb{R}^{r\times M}$ such that $\|Z\|_F=1$.
\end{lemma}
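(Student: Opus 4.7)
The plan is to bound the sum in \eqref{secondterm} by a concentration argument after first replacing the (hard to analyze) indicator $\mathbb{I}_{\pm}\{\bm{W}_t^{(\tau)}=0\}$ with the more structured indicator from Assumption~\ref{attackprob}. Since $\{\xi_{t-1}=0,\ldots,\xi_{t-\tau}=0\}\subseteq\{\bm{W}_t^{(\tau)}=0\}$, the signed indicators satisfy $\mathbb{I}_{\pm}\{\bm{W}_t^{(\tau)}=0\}\ge\mathbb{I}_{\pm}\{\xi_{t-1}=0,\ldots,\xi_{t-\tau}=0\}$ pointwise, so it suffices to prove the lower bound for
$X_t:=\mathbb{I}_{\pm}\{\xi_{t-1}=0,\ldots,\xi_{t-\tau}=0\}\cdot\bigl\|Z\Phi(\bm{U}_t^{(\tau)})\bigr\|_2$.

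Two ingredients are required. First, the same derivation that produced \eqref{finalexxx} in Lemma~\ref{firstlem} gives $\mathbb{E}[X_t]=\Omega\bigl((2(1-p)^\tau-1)\lambda/\nu^3\bigr)$. Second, because $|X_t|\le\|Z\Phi(\bm{U}_t^{(\tau)})\|_2$ and \eqref{normsphi} bounds the sub-Gaussian norm of $\|Z\Phi(\bm{U}_t^{(\tau)})\|_2$ by $O(\sqrt{M\tau}L_\phi\sigma_u)=O(\nu\lambda)$, we get $\|X_t\|_{\psi_2}=O(\nu\lambda)$.

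The main obstacle, and the only nontrivial step, is the dependence between the $X_t$'s: $X_t$ is measurable with respect to $(u_{t-\tau},\ldots,u_t)$ and $(\xi_{t-1},\ldots,\xi_{t-\tau})$, so $X_t$ and $X_s$ can share randomness whenever $|t-s|\le\tau$. I handle this with a standard decoupling-by-blocking argument: partition $\{\tau,\tau+1,\ldots,T-1\}$ into $\tau+1$ subsequences indexed by the residue of $t$ modulo $\tau+1$. Within each subsequence consecutive indices differ by $\tau+1$, so the $u$-windows and $\xi$-windows are disjoint and the $X_t$'s within a subsequence are mutually independent (using Assumption~\ref{subinputs} and independence of the $\xi_t$'s from Assumption~\ref{attackprob}).

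For each of the $\tau+1$ subsequences, which has $\Theta(T/\tau)$ independent sub-Gaussian terms, I apply the centering lemma (Lemma~\ref{centering}) and Hoeffding's inequality (Lemma~\ref{sumsub} together with \eqref{plus}) to the centered sum to get, for some absolute $c>0$,
\[
\mathbb{P}\!\left(\sum_{t\text{ in block}}X_t\le\tfrac{1}{2}\sum_{t\text{ in block}}\mathbb{E}[X_t]\right)\le\exp\!\left(-\frac{c\,T\,(2(1-p)^\tau-1)^2}{\tau\,\nu^8}\right),
\]
where the choice of deviation $\tfrac{1}{2}\mathbb{E}[X_t]$ is natural because it keeps the final sum a constant fraction of the lower bound from Lemma~\ref{firstlem}. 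A union bound over the $\tau+1$ blocks then turns $\delta$ into $\delta/(\tau+1)$; the resulting extra $\log(\tau+1)$ is absorbed into the stated $\log(1/\delta)$ because, by Lemma~\ref{lowernu} and the definition $\nu=\sqrt{M\tau}L_\phi\sigma_u/\lambda$, the factor $\nu^8/(2(1-p)^\tau-1)^2$ already grows polynomially in $\tau$ and dominates $\log\tau$. Summing the block lower bounds and using $\mathbb{E}[X_t]=\Omega((2(1-p)^\tau-1)\lambda/\nu^3)$ yields \eqref{secondterm} whenever $T$ satisfies \eqref{fixedlowerboundtime}.
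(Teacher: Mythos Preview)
Your proof is correct and follows a genuinely different route from the paper's. Both arguments begin by replacing $\mathbb{I}_{\pm}\{\bm{W}_t^{(\tau)}=0\}$ with the $\xi$-based indicator $\mathbb{I}_{\pm}\{\xi_{t-1}=\cdots=\xi_{t-\tau}=0\}$, but they diverge in how they handle the dependence across $t$. You use the classical blocking trick: partition into $\tau+1$ residue classes so that terms within each class are independent, then apply Hoeffding per block and a union bound. The paper instead bounds the sub-Gaussian norm of the \emph{entire} centered sum directly: it first uses convexity of the exponential moment in the coefficients $a_t$ to replace each $\|Z\Phi(\bm{U}_t^{(\tau)})\|_2$ by its upper envelope $\sqrt{M}L_\phi\sum_{j=0}^\tau\|u_{t-j}\|_2$ (the maximum of a convex function over a box is attained at an extreme point), then rewrites the resulting double sum as $\sum_t\bigl(\sum_{j=t}^{t+\tau}\Xi_j\bigr)\|u_t\|_2$ and conditions on the $\Xi$'s to exploit independence of the $u_t$'s. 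This yields a single Hoeffding application with no union bound. Your approach is more elementary and modular; the paper's is slightly sharper, avoiding the extra $\log(\tau+1)$ factor your union bound produces. On that point, your absorption argument is imprecise: the prefactor $\tau\nu^8/(2(1-p)^\tau-1)^2$ being polynomial in $\tau$ does not by itself kill an \emph{additive} $\log(\tau+1)$ inside the sample-complexity logarithm when $\delta$ is close to $1$. This is a minor quantitative wrinkle rather than a gap in the method, and it disappears downstream in Theorem~\ref{b9}, where $\delta$ is replaced by $\delta/\bigl(2(1+2/\epsilon^*)^{rM}\bigr)$ and the logarithm already contains a $\log(\tau\nu)$ term.
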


\begin{proof}
Similar to \eqref{Lphi} and \eqref{normsphi}, we have
\begin{align}\label{leftright}
 \sum_{t=\tau}^{T-1}   \|Z \Phi(\bm{U}_t^{(\tau)})\|_2 &\leq \sqrt{M}  L_\phi \sum_{t=\tau}^{T-1} \sqrt{\sum_{i=0}^{\tau}\|u_{t-i}\|_2^2}\leq  
      \sqrt{M}  L_\phi \sum_{t=\tau}^{T-1}  \sum_{j=0}^\tau \|u_{t-j}\|_2.
\end{align}
Now, we define a Bernoulli variable $\bm{\Xi}_t^{(\tau)}$ such that 
$\{\bm{\Xi}_t^{(\tau)}=0\} \Leftrightarrow \{\xi_{t-1}=0, \dots, \xi_{t-\tau}=0\}$. From \eqref{40}, we know that $\{\bm{W}_t^{(\tau)}=0\} \supseteq \{\bm{\Xi}_t^{(\tau)}=0\}$. Thus, it suffices to prove the desired result with  $\bm{\Xi}_t^{(\tau)}$ in place of $\bm{W}_t^{(\tau)}$.

Consider the constants 
$A_1, \dots, A_T > 0$.  Then, for all $\theta \in \mathbb{R}$, we have 
\begin{align}\label{argmaxaA}
\nonumber &\argmax_{\substack{|a_t| \leq A_t,\\ t=\tau,\dots,T-1} }\mathbb{E} \Bigr[\exp\Bigr(\theta \Bigr(\sum_{t=\tau}^{T-1} a_t\cdot \bigr(\mathbb{I}_{\pm}\{\bm{\Xi}_t^{(\tau)}=0\}-\mathbb{E}[\mathbb{I}_{\pm}\{\bm{\Xi}_t^{(\tau)}=0\} ]\bigr) \Bigr)^2 \Bigr)\Bigr]  \\&\hspace{80mm}\subseteq \{\pm A_1\} \times \cdots \times \{\pm A_T\},
\end{align}
since the function on the left-hand side is convex in $(a_1, \dots, a_T)$ and the maximum of the convex function is attained at extreme points. 
Due to \eqref{leftright}, substituting $\sum_{t=\tau}^{T-1}\|Z \Phi(\bm{U}_t^{(\tau)})\|_2$ into $a_t$ and  $\sqrt{M} L_\phi (\tau+1)
    \sum_{t=\tau}^{T-1}  \sum_{j=0}^\tau \|u_{t-j}\|_2 $ into $A_t$ in \eqref{argmaxaA} yields 
\begin{align}\label{uppbound}
\nonumber &\left\|\sum_{t=\tau}^{T-1}\| Z \Phi(\bm{U}_t^{(\tau)})\|_2\cdot \left(\mathbb{I}_{\pm}\{\bm{\Xi}_t^{(\tau)}=0\}-\mathbb{E}[\mathbb{I}_{\pm}\{\bm{\Xi}_t^{(\tau)}=0\} ] \right)\right\|_{\psi_2} \\&\hspace{10mm}\leq \left\|\sqrt{M} L_\phi (\tau+1)
     \sum_{t=\tau}^{T-1}  \sum_{j=0}^\tau \|u_{t-j}\|_2\cdot \left(\mathbb{I}_{\pm}\{\bm{\Xi}_t^{(\tau)}=0\}-\mathbb{E}[\mathbb{I}_{\pm}\{\bm{\Xi}_t^{(\tau)}=0\} ]\right) \right\|_{\psi_2}
\end{align}
considering that $\bm{\Xi}_t^{(\tau)}$ is independent of any other variables and the expectation of $\mathbb{I}_{\pm}\{\bm{\Xi}_t^{(\tau)}=0\}-\mathbb{E}[\mathbb{I}_{\pm}\{\bm{\Xi}_t^{(\tau)}=0\} ]$ is zero, in which case the sub-Gaussian norm can be determined by \eqref{thirdprop}.

Now, we analyze the right-hand side of \eqref{uppbound}. 
    For simplicity, we define 
    \[
\Xi_t := 
\begin{cases}
0, & t = 0, \dots, \tau-1, \\[6pt]
\mathbb{I}_{\pm}\{\bm{\Xi}_t^{(\tau)}=0\} - \mathbb{E}\bigr[\mathbb{I}_{\pm}\{\bm{\Xi}_t^{(\tau)}=0\}\bigr], & t = \tau, \dots, T-1, \\[6pt]
0, & t = T, \dots, T+\tau-1.
\end{cases}
\]
Then, we have 
    \begin{align}\label{meanzero}
    \sum_{t=\tau}^{T-1} 
    \sum_{j=0}^\tau \|u_{t-j}\|_2\cdot \Xi_t = \sum_{t=0}^{T-1}\Biggr(\sum_{j=t}^{t+\tau} \Xi_j\Biggr)\cdot \|u_t\|_2.
    \end{align}
    For all $t$, we have 
    \[
    \left\|\sum_{t=0}^{T-1}\Biggr(\sum_{j=t}^{t+\tau} \Xi_j\Biggr)\cdot \|u_t\|_2\right\|_{\psi_2} \leq (\tau+1)\sigma_u
    \]
    due to Assumption \ref{subinputs}. Given the filtration $\mathcal{F}^i = \bm{\sigma}\{\Xi_t: t=0,\dots, T-1\}$ and
considering that $\mathbb{E}[\Xi_t] = 0$, we can apply the property \eqref{thirdprop} to obtain
\begin{align*}
\mathbb{E}\Biggr[\exp\Biggr(\theta \sum_{t=0}^{T-1}\Biggr(\sum_{j=t}^{t+\tau} \Xi_j\Biggr)\cdot \|u_t\|_2\Biggr)\Biggr] &\leq \mathbb{E}\Biggr[\mathbb{E}\Biggr[\exp\Biggr(\theta \sum_{t=0}^{T-1}\Biggr(\sum_{j=t}^{t+\tau} \Xi_j\Biggr)\cdot \|u_t\|_2\Biggr)\Biggr]~\Biggr|~ \mathcal{F}^i\Biggr]\\&\leq \prod_{t=0}^{T-1} \exp(\theta^2 (\tau+1)^2\sigma_u^2) = \exp(\theta^2 T(\tau+1)^2\sigma_u^2), \numberthis\label{gettingnorm}
\end{align*}
for all $\theta\in \mathbb{R}$, which implies that the mean-zero variable \eqref{meanzero} is sub-Gaussian and its norm is $O(\sqrt{T} (\tau+1)\sigma_u)$. In turn, due to \eqref{uppbound}, we arrive at 
\begin{align}\label{uppbound2}
\left\|\sum_{t=\tau}^{T-1} Z \Phi(\bm{U}_t^{(\tau)})\cdot \Xi_t \right\|_{\psi_2} =O(\sqrt{TM}L_\phi (\tau+1) \sigma_u).
\end{align}
Finally, we can apply the property \eqref{minus} to obtain
\begin{align*}
    \mathbb{P}\biggr( \sum_{t=\tau}^{T-1} Z \Phi(\bm{U}_t^{(\tau)})\cdot\Xi_t & > -\Omega\left(\frac{ (2(1-p)^\tau-1) \cdot \lambda T}{2\nu^3} \right)   \biggr) \\&\geq 1-\exp\biggr( -\Omega\biggr(\frac{(2(1-p)^\tau-1)^2 \lambda^2 T^2} {(\sqrt{TM}L_\phi  (\tau+1) \sigma_u)^2\nu^6}\biggr)\biggr)  \\&=  1-\exp\biggr( -\Omega\biggr(\frac{(2(1-p)^\tau-1)^2\cdot   T }{\tau \nu^8}\biggr)\biggr).
\end{align*}
We derive from \eqref{finalexxx} that 
\begin{align*}
    \mathbb{E}\left[\sum_{t=\tau}^{T-1}Z \Phi(\bm{U}_t^{(\tau)})\cdot \mathbb{I}_{\pm} \{\bm{\Xi}_t^{(\tau)}=0\}\right] = \Omega\Bigr(\frac{(2(1-p)^\tau-1) \cdot \lambda  T}{\nu^3}\Bigr).
\end{align*}
Since $\Xi_t =\mathbb{I}_{\pm}\{\bm{\Xi}_t^{(\tau)}=0\} - \mathbb{E}\bigr[\mathbb{I}_{\pm}\{\bm{\Xi}_t^{(\tau)}=0\}\bigr] $, we arrive at 
\begin{align*}
    \mathbb{P}\biggr( \sum_{t=\tau}^{T-1} Z \Phi(\bm{U}_t^{(\tau)})\cdot\mathbb{I}_{\pm} \{\bm{\Xi}_t^{(\tau)}=0\}  &>  \Omega\left(\frac{ (2(1-p)^\tau-1) \cdot \lambda T}{2\nu^3} \right)   \biggr) \\&\hspace{5mm}\geq  1-\exp\biggr( -\Omega\biggr(\frac{(2(1-p)^\tau-1)^2\cdot   T }{\tau \nu^8}\biggr)\biggr).\numberthis\label{lower11}
\end{align*}
Since we have $\{\bm{W}_t^{(\tau)}=0\} \supseteq \{\bm{\Xi}_t^{(\tau)}=0\}$, establishing a lower bound of $1-\delta$ on the right-hand side of \eqref{lower11} suffices to conclude the proof. 
\end{proof}

We now study the effect of perturbing $Z\in \mathbb{R}^{r\times M}$. 

\begin{lemma}\label{difff}
    Suppose that Assumptions \ref{basis} and \ref{subinputs} hold. Given $\delta\in(0,1]$, when $T= \Omega(\log(2/\delta))$, the inequality 
    \[
    \sum_{t=\tau}^{T-1}\mathbb{I}_{\pm}\{\bm{W}_t^{(\tau)}=0\} \cdot \bigr\|Z \Phi(\bm{U}_t^{(\tau)})\bigr\|_2 -  \sum_{t=\tau}^{T-1}\mathbb{I}_{\pm}\{\bm{W}_t^{(\tau)}=0\} \cdot \bigr\|\tilde Z \Phi(\bm{U}_t^{(\tau)})\bigr\|_2 \geq -O(T  \|Z-\tilde Z\|_F L_\phi \sqrt{M} \tau \sigma_u )
   \]
holds for every $Z, \tilde Z \in \mathbb{R}^{r\times M}$  with probability at least $1-\frac{\delta}{2}$.
\end{lemma}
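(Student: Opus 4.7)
\textit{Proof Plan for Lemma \ref{difff}.} The strategy is to bypass the $\mathbb{I}_{\pm}$ signs by a worst-case estimate and then reduce the telescoped quantity to a sum of independent sub-Gaussian norms to which one concentration bound applies uniformly in $Z,\tilde Z$. The first step is purely algebraic: for each $t$, since $\mathbb{I}_{\pm}\{\cdot\}\in\{-1,+1\}$, we have
\begin{align*}
\mathbb{I}_{\pm}\{\bm{W}_t^{(\tau)}=0\}\cdot\bigr(\|Z\Phi(\bm{U}_t^{(\tau)})\|_2-\|\tilde Z\Phi(\bm{U}_t^{(\tau)})\|_2\bigr) \geq -\bigr|\|Z\Phi(\bm{U}_t^{(\tau)})\|_2-\|\tilde Z\Phi(\bm{U}_t^{(\tau)})\|_2\bigr|,
\end{align*}
and the reverse triangle inequality followed by the Cauchy--Schwarz bound $\|(Z-\tilde Z)\Phi(\bm{U}_t^{(\tau)})\|_2\leq \|Z-\tilde Z\|_F \cdot \|\Phi(\bm{U}_t^{(\tau)})\|_2$ together with $\|\Phi(\bm{U}_t^{(\tau)})\|_2\leq \sqrt{M}L_\phi\|\bm{U}_t^{(\tau)}\|_2$ (already established in \eqref{Lphi} and \eqref{36}) gives the key deterministic bound
\begin{align*}
\sum_{t=\tau}^{T-1}\mathbb{I}_{\pm}\{\bm{W}_t^{(\tau)}=0\}\cdot\bigr(\|Z\Phi(\bm{U}_t^{(\tau)})\|_2-\|\tilde Z\Phi(\bm{U}_t^{(\tau)})\|_2\bigr) \geq -\|Z-\tilde Z\|_F\sqrt{M}L_\phi\sum_{t=\tau}^{T-1}\|\bm{U}_t^{(\tau)}\|_2.
\end{align*}

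Next, I would exploit the overlap structure of the windowed inputs: since $\|\bm{U}_t^{(\tau)}\|_2 \leq \sum_{j=0}^\tau\|u_{t-j}\|_2$, changing the order of summation and noting each index $s\in\{0,\dots,T-1\}$ appears in at most $\tau+1$ windows yields
\begin{align*}
\sum_{t=\tau}^{T-1}\|\bm{U}_t^{(\tau)}\|_2 \leq (\tau+1)\sum_{s=0}^{T-1}\|u_s\|_2.
\end{align*}
Crucially, the right-hand side is a sum of \emph{independent} sub-Gaussian scalars (Assumption \ref{subinputs}), unlike the left-hand side whose summands share inputs. The remaining step is to show $\sum_{s=0}^{T-1}\|u_s\|_2=O(T\sigma_u)$ with probability at least $1-\delta/2$ when $T=\Omega(\log(2/\delta))$. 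Since $\bigr\|\|u_s\|_2-\mathbb{E}\|u_s\|_2\bigr\|_{\psi_2}=O(\sigma_u)$ by Lemma \ref{centering}, independence and Lemma \ref{sumsub} give
\begin{align*}
\Bigr\|\sum_{s=0}^{T-1}(\|u_s\|_2-\mathbb{E}\|u_s\|_2)\Bigr\|_{\psi_2}=O(\sqrt{T}\sigma_u),
\end{align*}
and Hoeffding's tail bound \eqref{plus} together with $\mathbb{E}\|u_s\|_2=O(\sigma_u)$ implies $\sum_{s=0}^{T-1}\|u_s\|_2\leq O(T\sigma_u+\sqrt{T\log(2/\delta)}\cdot\sigma_u)=O(T\sigma_u)$ on the chosen regime of $T$, with probability at least $1-\delta/2$.

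Combining the three steps, we obtain the claimed inequality with the right-hand side $-O(T\|Z-\tilde Z\|_F L_\phi\sqrt{M}\tau\sigma_u)$, and crucially the high-probability event does \emph{not} depend on the choice of $Z,\tilde Z$, so the statement holds for every pair simultaneously. The only mildly technical point is the concentration step for $\sum_s\|u_s\|_2$: one must first center the sub-Gaussian norms of $\|u_s\|_2$ before summing (since Lemma \ref{sumsub} requires mean-zero summands), which is standard via Lemma \ref{centering}. No other step presents a real obstacle, as the rest is algebraic manipulation using the Lipschitzness already packaged in Assumption \ref{basis}.
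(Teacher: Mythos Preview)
Your proposal is correct and follows essentially the same approach as the paper's proof: both drop the $\mathbb{I}_{\pm}$ sign via the reverse triangle inequality, bound $\|(Z-\tilde Z)\Phi(\bm{U}_t^{(\tau)})\|_2$ by $\|Z-\tilde Z\|_F\sqrt{M}L_\phi\sum_{j=0}^\tau\|u_{t-j}\|_2$, collapse the double sum into $(\tau+1)\sum_{t=0}^{T-1}\|u_t\|_2$, and then apply centering (Lemma~\ref{centering}), the independent-sum bound (Lemma~\ref{sumsub}), and Hoeffding's tail~\eqref{plus}. You also correctly emphasize that the high-probability event is independent of $Z,\tilde Z$, which is precisely what the paper exploits in the subsequent covering argument.
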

\begin{proof}
For simplicity, we define $\bar f_t(Z) :=\mathbb{I}_{\pm}\{\bm{W}_t^{(\tau)}=0\} \cdot \bigr\|Z \Phi(\bm{U}_t^{(\tau)})\bigr\|_2$. 
For $Z, \tilde Z \in \mathbb{R}^{r\times M}$, we have 
     \begin{align}\label{ssdiff}
     \nonumber\sum_{t=\tau}^{T-1}\bar f_t(Z)  - \sum_{t=\tau}^{T-1}\bar f_t(\tilde Z)  &\geq - \sum_{t=\tau}^{T-1} \|(Z-\tilde Z) \Phi(\bm{U}_t^{(\tau)})\|_2\\\nonumber & \geq - \sum_{t=\tau}^{T-1}\|Z-\tilde Z\|_F \cdot L_\phi \sqrt{M} \cdot \sum_{j=0}^\tau \|u_{t-j}\|_2 \\&\geq -\sum_{t=0}^{T-1} \|Z-\tilde Z\|_F L_\phi \sqrt{M} (\tau+1) \|u_t\|_2, 
 \end{align}
 where the first inequality is due to the triangle inequality and the second comes from \eqref{leftright}. 
 Assumption \ref{subinputs} tells that 
  $\left\| \|u_t\|_2 \right\| \leq  \sigma_u $ and thus, we have
  \[
  \left\| \|Z-\tilde Z\|_F L_\phi \sqrt{M} (\tau+1)( \|u_t\|_2  - \mathbb{E}[\|u_t\|_2])\right\|_{\psi_2}= \|Z-\tilde Z\|_F L_\phi \sqrt{M} (\tau+1)\cdot O(\sigma_u)
  \]
 due to Lemma \ref{centering}. 
 In turn, due to Lemma \ref{sumsub} and the independence of control inputs, we have
 \[
  \left\| \sum_{t=0}^{T-1}\|Z-\tilde Z\|_F L_\phi \sqrt{M} (\tau+1)( \|u_t\|_2  - \mathbb{E}[\|u_t\|_2])\right\|_{\psi_2} = \|Z-\tilde Z\|_F L_\phi \sqrt{M} (\tau+1)\cdot O(\sqrt{T}\sigma_u).
  \]
 Using the property \eqref{plus}, one can obtain
 \begin{align*}
     \mathbb{P}\biggr(\sum_{t=0}^{T-1}\|Z-\tilde Z\|_F L_\phi &\sqrt{M} (\tau+1)( \|u_t\|_2  - \mathbb{E}[\|u_t\|_2]) \leq \|Z-\tilde Z\|_F L_\phi \sqrt{M} (\tau+1) \cdot O(T\sigma_u) \biggr)\\&\geq 1-\exp\biggr(-\Omega\Bigr(\frac{T^2\|Z-\tilde Z\|_F^2 L_\phi^2 M (\tau+1)^2 \sigma_u^2 }{T\|Z-\tilde Z\|_F^2 L_\phi^2 M (\tau+1)^2 \sigma_u^2}\Bigr)\biggr) = 1-\exp(-\Omega(T)). 
 \end{align*}
Note that 
$\mathbb{E} [\|u_t\|_2] =O(\sigma_u) $ due to \eqref{firstprop}. Thus, we can write
\begin{align}\label{48}
\mathbb{P}\biggr(\sum_{t=0}^{T-1}\|Z-\tilde Z\|_F L_\phi &\sqrt{M} (\tau+1) \|u_t\|_2  \leq 2\|Z-\tilde Z\|_F L_\phi \sqrt{M} (\tau+1) \cdot O(T\sigma_u) \biggr)\geq 1-\exp(-\Omega(T)). 
 \end{align}
When $T=\Omega(\log(2/\delta))$, the probability in \eqref{48} is lower-bounded by $1-\frac{\delta}{2}$. Considering the lower bound of \eqref{ssdiff} completes the proof.
\end{proof}

Now, we will achieve that the inequality \eqref{auxsuff} in Theorem \ref{sufff} holds for all $Z\in \mathbb{R}^{r\times M}$ such that $\|Z\|_F=1$, after finite time. To take advantage of Lemma \ref{difff}, which states the difference of $\sum_t \bar f_t(Z)$ depending on $Z$, we introduce the important lemma presented in \cite{vershynin2010intro}.

\begin{lemma}[Covering number of the sphere]\label{covering}
Define $\mathbb{S}^{r\times M-1}:= \{Z\in \mathbb{R}^{r\times M}:\|Z\|_F=1 \}$. 
    For $\epsilon>0$, consider a subset $\mathcal{N}_\epsilon$ of $\mathbb{S}^{r\times M-1}$, such that for all    
    $Z\in\mathbb{S}^{r\times M-1}$, there exists some point $\tilde{Z}\in\mathcal{N}_\epsilon$ satisfying $\|Z-\tilde{Z}\|_2 \leq \epsilon$. The minimal cardinality of such a subset is called the covering number of the sphere and is upper-bounded by $(1+\frac{2}{\epsilon})^{rM}$.
\end{lemma}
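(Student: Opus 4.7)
The plan is to prove this via the classical volume--packing argument. By vectorizing matrices, I would identify $\mathbb{R}^{r\times M}$ with $\mathbb{R}^{rM}$ equipped with the Euclidean norm, which coincides with the Frobenius norm on matrices, so that $\mathbb{S}^{r\times M-1}$ becomes the unit Euclidean sphere in $\mathbb{R}^{rM}$. The standard route is then to reduce the covering question to a packing question and invoke a comparison of Euclidean ball volumes, which is exactly the template used in \cite{vershynin2018high}.

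First I would construct $\mathcal{N}_\epsilon$ as a \emph{maximal} $\epsilon$-separated subset of $\mathbb{S}^{r\times M-1}$: one whose distinct points are pairwise at distance strictly greater than $\epsilon$ and which cannot be enlarged while keeping this separation (such a set exists by a standard Zorn's-lemma/greedy argument, and is finite a posteriori from the bound below). Maximality automatically makes $\mathcal{N}_\epsilon$ an $\epsilon$-net, because any $Z$ on the sphere that were more than $\epsilon$ from every point of $\mathcal{N}_\epsilon$ could be adjoined, contradicting maximality. Hence it is enough to bound the cardinality of any such separated set on the sphere.

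For the counting step, I would use disjointness of small balls. The open balls $B(\tilde Z, \epsilon/2)$ centered at the points $\tilde Z \in \mathcal{N}_\epsilon$ are pairwise disjoint by the triangle inequality, and each is contained in $B(0, 1+\epsilon/2)$ since its center lies on the unit sphere. Because the Euclidean volume of a ball of radius $R$ in $\mathbb{R}^{rM}$ scales as $R^{rM}$, summing the disjoint small volumes and comparing with the enclosing ball yields
\begin{align*}
|\mathcal{N}_\epsilon|\cdot \left(\frac{\epsilon}{2}\right)^{rM} \,\leq\, \left(1+\frac{\epsilon}{2}\right)^{rM},
\end{align*}
which rearranges to the claimed bound $|\mathcal{N}_\epsilon|\leq (1+2/\epsilon)^{rM}$.

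I do not anticipate a genuine obstacle, since this is a textbook argument; the only point worth a sanity check is the reading of $\|\cdot\|_2$ in the hypothesis. The condition $\|Z\|_F=1$ in the definition of $\mathbb{S}^{r\times M-1}$ makes the Euclidean/Frobenius reading of $\|Z-\tilde Z\|_2$ the natural one, and that is the setting in which the vectorization identification and the $rM$-dimensional volume comparison apply directly. Under that reading, the plan above is complete.
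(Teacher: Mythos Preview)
Your proposal is correct and is exactly the standard volume--packing argument from \cite{vershynin2010intro,vershynin2018high}; the paper does not supply its own proof of this lemma but simply cites Vershynin, so your argument matches the intended reference.
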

The covering number argument states that if you select $(1+\frac{2}{\epsilon})^{rM}$ number of points which achieve the sufficient positiveness of $\sum_t \bar f_t(Z)$, and show that the difference of $\sum_t \bar f_t(Z)$ is small enough within the distance $\epsilon$, then all the points will achieve the positiveness of $\sum_t \bar f_t(Z)$. 

\begin{theorem}\label{b9}
 Suppose that Assumptions  \ref{basis} and \ref{subinputs} hold.
 Consider  $\nu:= \frac{\sqrt{M\tau} L_\phi \sigma_u}{\lambda}$ and $\mathbb{S}^{r\times M-1}:= \{Z\in \mathbb{R}^{r\times M}:\|Z\|_F=1 \}$.   Given $\delta\in(0,1]$, when 
 \begin{align}\label{finaltimebound}
    T=\Omega\left(\frac{\tau \nu^8}{(2(1-p)^\tau-1)^2}\left[rM \log\left(\frac{\tau\nu}{2(1-p)^\tau-1} \right) + \log\left(\frac{1}{\delta}\right)\right]\right),
 \end{align}
 we have 
 \begin{align}\label{finallower}\sum_{t=\tau}^{T-1}\mathbb{I}_{\pm}\{\bm{W}_t^{(\tau)}=0\} \cdot \bigr\|Z \Phi(\bm{U}_t^{(\tau)})\bigr\|_2 &=\Omega\left(\frac{ (2(1-p)^\tau-1) \cdot \lambda T}{4\nu^3} \right)> 0, \quad \forall Z\in\mathbb{S}^{r\times M-1}
 \end{align}
 with probability at least $1-\delta$. 
\end{theorem}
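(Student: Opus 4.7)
The plan is to combine the fixed-$Z$ concentration bound of Lemma \ref{secondlem} with the perturbation bound of Lemma \ref{difff} via the standard $\epsilon$-net / covering argument: establish the lower bound for every point of a finite net of $\mathbb{S}^{r\times M-1}$ by a union bound, then transfer that lower bound to every unit-Frobenius matrix using the Lipschitz-type control on how the statistic changes with $Z$. Write $F_T(Z) := \sum_{t=\tau}^{T-1}\mathbb{I}_{\pm}\{\bm{W}_t^{(\tau)}=0\}\cdot \|Z\,\Phi(\bm{U}_t^{(\tau)})\|_2$, and for a parameter $\epsilon>0$ to be tuned, let $\mathcal{N}_\epsilon\subset \mathbb{S}^{r\times M-1}$ be a minimal $\epsilon$-net, which by Lemma \ref{covering} has cardinality at most $(1+2/\epsilon)^{rM}$.

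I would first apply Lemma \ref{secondlem} at confidence level $\delta/(2|\mathcal{N}_\epsilon|)$ to each $\tilde Z \in \mathcal{N}_\epsilon$ and take a union bound. On an event $\mathcal{E}_1$ of probability at least $1-\delta/2$, this would give
\begin{align*}
F_T(\tilde Z) = \Omega\!\left(\frac{(2(1-p)^\tau-1)\,\lambda\, T}{2\nu^3}\right) \qquad \text{for every } \tilde Z \in \mathcal{N}_\epsilon,
\end{align*}
provided $T = \Omega\bigl(\tfrac{\tau\nu^8}{(2(1-p)^\tau-1)^2}\bigl[rM\log(1+2/\epsilon)+\log(1/\delta)\bigr]\bigr)$, after bounding $\log|\mathcal{N}_\epsilon| \le rM\log(1+2/\epsilon)$ and absorbing the $\log 2$. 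Independently, with its own $\delta/2$ confidence budget, Lemma \ref{difff} supplies an event $\mathcal{E}_2$ on which $F_T(Z)-F_T(\tilde Z) \geq -O\bigl(T\,\|Z-\tilde Z\|_F\, L_\phi\,\sqrt{M}\,\tau\,\sigma_u\bigr)$ for all pairs; this bound is already uniform in $Z,\tilde Z$, so no further net step is required.

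I would then assign, to each $Z \in \mathbb{S}^{r\times M-1}$, a net point $\tilde Z \in \mathcal{N}_\epsilon$ with $\|Z-\tilde Z\|_F \leq \epsilon$. Using $L_\phi\sqrt{M}\,\tau\,\sigma_u = \sqrt{\tau}\,\lambda\,\nu$ from the definition of $\nu$, the perturbation becomes $O\bigl(T\,\epsilon\,\sqrt{\tau}\,\lambda\,\nu\bigr)$. Choosing $\epsilon$ of order $\tfrac{2(1-p)^\tau-1}{\sqrt{\tau}\,\nu^4}$ (times a sufficiently small absolute constant) forces this perturbation to be at most half of the uniform lower bound on the net, and so on $\mathcal{E}_1\cap\mathcal{E}_2$ (which has probability at least $1-\delta$) we would obtain
\begin{align*}
F_T(Z) = \Omega\!\left(\frac{(2(1-p)^\tau-1)\,\lambda\, T}{4\nu^3}\right) > 0, \qquad \forall\, Z \in \mathbb{S}^{r\times M-1},
\end{align*}
strict positivity following from $2(1-p)^\tau - 1 > 0$ under Assumption \ref{attackprob}.

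The one piece that needs careful bookkeeping — and the main obstacle — is checking that the chosen $\epsilon$ does not inflate the $T$-requirement beyond \eqref{finaltimebound}. Substituting gives $\log(1+2/\epsilon) = O\bigl(\log(\sqrt{\tau}\,\nu^4/(2(1-p)^\tau-1))\bigr)$, which I would collapse to $O\bigl(\log(\tau\nu/(2(1-p)^\tau-1))\bigr)$ by pulling exponents out of the log and using $\nu = \Omega(1)$ from Lemma \ref{lowernu} to hide absolute constants. This simultaneous tuning — making the net-induced perturbation negligible against the pointwise lower bound while keeping $rM\log(1+2/\epsilon)$ within the stated sample-complexity form — is the only non-mechanical step; the remainder is the standard net-plus-union-bound template.
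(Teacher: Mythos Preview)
Your proposal is correct and follows essentially the same approach as the paper's own proof: an $\epsilon$-net of the unit Frobenius sphere, a union bound over Lemma~\ref{secondlem} at each net point (using half the confidence budget), the uniform perturbation bound of Lemma~\ref{difff} (using the other half), and finally tuning $\epsilon$ to order $\tfrac{2(1-p)^\tau-1}{\sqrt{\tau}\,\nu^4}$ so that the perturbation eats at most half of the pointwise lower bound. The bookkeeping you flag---collapsing $\log(1+2/\epsilon)$ to the stated form via Lemma~\ref{lowernu}---is exactly what the paper does as well.
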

\begin{proof}
As in the previous lemma, we define $\bar f_t(Z) :=\mathbb{I}_{\pm}\{\bm{W}_t^{(\tau)}=0\} \cdot \bigr\|Z \Phi(\bm{U}_t^{(\tau)})\bigr\|_2$.  Also, define $\epsilon^* = \frac{1}{4}O\bigr(\frac{2(1-p)^\tau-1}{\tau^{1/2}\nu^4}\bigr)$.
From Lemma \ref{difff}, for all $Z,\tilde Z\in \mathbb{S}^{r\times M-1}$ satisfying $\|Z-\tilde Z\|_F\leq \epsilon^*$, we have
\begin{align}\label{moves}
  \nonumber  \sum_{t=\tau}^{T-1}\bar f_t(Z)  - \sum_{t=\tau}^{T-1}\bar f_t(\tilde Z)  \geq -O(T  \epsilon^* L_\phi \sqrt{M} \tau \sigma_u )&\geq -\frac{1}{4}O\Bigr(  T \cdot \frac{2(1-p)^\tau-1}{\tau^{1/2}\nu^3} \frac{L_\phi \sqrt{M} \tau \sigma_u}{\nu} \Bigr)\\&=-\frac{1}{4}O\Bigr(\frac{(2(1-p)^\tau-1)\lambda T}{\nu^3} \Bigr).
\end{align}
with probability at least $1-\frac{\delta}{2}$, when $T= \Omega(\log(2/\delta))$.
If we select $(1+\frac{2}{\epsilon^*})^{rM}$ points $\{Z_1, \dots, Z_{(1+\frac{2}{\epsilon^*})^{rM}}\}$ satisfying \eqref{secondterm} with probability at least $1-\frac{\delta}{2\cdot (1+\frac{2}{\epsilon^*})^{rM}}$, then it follows that
\begin{align}
    \sum_{t=\tau}^{T-1}\bar f_t(Z)  = \frac{1}{2}\Omega\left(\frac{(2(1-p)^\tau-1)\lambda T}{\nu^3}\right), \quad \forall Z\in \hat Z = \{Z_1, \dots,Z_{(1+\frac{2}{\epsilon^*})^{rM}}\}
\end{align}
with probability at least $1-\frac{\delta}{2}$.
Then, due to Lemma \ref{covering},  every point in $\mathbb{S}^{r\times M-1}$ is within a distance of $\epsilon^*$ from at least  one point in $\hat Z$. In turn, by \eqref{moves}, we have
\begin{align}\label{56}
    \sum_{t=\tau}^{T-1}\bar f_t(Z)  \geq \frac{1}{4}\Omega\left(\frac{(2(1-p)^\tau-1)\lambda T}{\nu^3}\right) > 0 , \quad \forall Z\in\mathbb{S}^{r\times M-1}
\end{align}
holds with probability at least $1-\delta$. Thus, we replace $\delta$ in \eqref{fixedlowerboundtime} with $\frac{\delta}{2\cdot (1+\frac{2}{\epsilon^*})^{rM}}$ to arrive at 
\begin{align}\label{57}
   \nonumber
    T&=\Omega\left( \frac{\tau \nu^8}{(2(1-p)^\tau-1)^2}\log\Bigr(\frac{2(1+\frac{2}{\epsilon^*})^{rM}}{\delta}\Bigr)\right)
   \\\nonumber&= \Omega\left( \frac{\tau \nu^8}{(2(1-p)^\tau-1)^2}\left[rM \log\left(1+\frac{2}{\epsilon^*} \right) + \log\left(\frac{1}{\delta}\right)\right]\right)\\&=\Omega\left(\frac{\tau \nu^8}{(2(1-p)^\tau-1)^2}\left[rM \log\left(\frac{\tau\nu}{2(1-p)^\tau-1} \right) + \log\left(\frac{1}{\delta}\right)\right]\right),
\end{align}
where we leveraged Lemma \ref{lowernu} for the last equality. Note that $T= \Omega(\log(2/\delta))$ required for \eqref{moves} is automatically satisfied with the recovery time \eqref{57}. This completes the proof.
\end{proof}

In Theorem \ref{b9}, we achieve that $\sum_t \bar f_t(Z)$ is sufficiently positive after the recovery time given in \eqref{finaltimebound}. Thus, we arrive at the conclusion that when $\bm{x}_t^{(\tau)} = 0$ and $\bm{\epsilon}_t=0$ for all $t$,    $G^*$ is the unique solution to the $\ell_2$-norm estimator \eqref{argmin} after finite time due to Lemma \ref{sufff}. 

We will now generalize for the case of nonzero $\bm{x}_t^{(\tau)}$ and $\bm{\epsilon}_t$. 
 Before presenting the main theorem, we provide the following useful lemma.
\begin{lemma}\label{normmax}
    Suppose that Assumptions \ref{lipschitz}, \ref{subinputs}, and \ref{subattacks} hold. Given $\delta \in(0,1],$ when $T= \Omega(\log(1/\delta))$, 
    \begin{align}
        \sum_{t=0}^{T-\tau-1} \|x_t\|_2 =O\left(\frac{(\sigma_u + \sigma_w)}{1-\rho} \cdot T\right)
    \end{align}
    holds with probability at least $1-\delta$. 
\end{lemma}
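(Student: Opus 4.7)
The plan is to first bound $\|x_t\|_2$ pathwise by an exponentially weighted sum of past inputs and disturbances using the contraction property in Assumption \ref{lipschitz}, then sum over $t$ and swap the order of summation, and finally invoke sub-Gaussian concentration for the resulting per-time sums.

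First I would unroll the dynamics: since $x_t = f^{(t)}(x_0,u_0,\dots,u_{t-1},w_0,\dots,w_{t-1})$ and $f(0,0,0)=0$, I can telescope by swapping the oldest triple $(x_0,u_0,w_0)$ to zero (which kills it by $f(0,0,0)=0$ and leaves $f^{(t-1)}$), then the next oldest pair $(u_1,w_1)$, and so on. Each swap incurs the Lipschitz cost from \eqref{Crhostable}, giving the chain bound
\begin{align*}
\|x_t\|_2 \;\leq\; C\rho^t\|x_0\|_2 \;+\; C\sum_{k=0}^{t-1}\rho^{\,t-k}\bigr(\|u_k\|_2 + \|w_k\|_2\bigr).
\end{align*}
Summing over $t=0,\dots,T-\tau-1$ and swapping the order of summation over $t$ and $k$ produces a geometric factor $\sum_{t>k}\rho^{t-k}\leq \rho/(1-\rho)$, yielding
\begin{align*}
\sum_{t=0}^{T-\tau-1}\|x_t\|_2 \;\leq\; \frac{C}{1-\rho}\|x_0\|_2 \;+\; \frac{C\rho}{1-\rho}\sum_{k=0}^{T-\tau-2}\bigr(\|u_k\|_2 + \|w_k\|_2\bigr).
\end{align*}

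Next I would control the two random sums. For the inputs, $\|u_k\|_2$ is sub-Gaussian with norm at most $\sigma_u$ by Assumption \ref{subinputs} and Definition \ref{subgvec}, and the $u_k$'s are independent. By Lemma \ref{centering} and Lemma \ref{sumsub}, $\sum_k (\|u_k\|_2 - \mathbb{E}\|u_k\|_2)$ is mean-zero sub-Gaussian with norm $O(\sqrt{T}\sigma_u)$. Combined with $\mathbb{E}\|u_k\|_2 = O(\sigma_u)$ from property \eqref{firstprop} and the one-sided Hoeffding inequality \eqref{plus}, we get $\sum_k \|u_k\|_2 = O(\sigma_u T)$ with probability at least $1-\delta/4$ provided $T=\Omega(\log(1/\delta))$.

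The main obstacle is the disturbance sum, since the $w_k$'s are neither independent nor zero-mean; only conditional sub-Gaussianity given the filtration $\mathcal{F}_k$ is available in Assumption \ref{subattacks}. To handle this I would imitate the conditional-MGF iteration used in \eqref{gettingnorm}: set $M_k := \|w_k\|_2 - \mathbb{E}\bigr[\|w_k\|_2\,\big|\,\mathcal{F}_k\bigr]$, which is conditionally mean-zero and conditionally sub-Gaussian with norm $O(\sigma_w)$. Peeling off the innermost expectation one step at a time gives $\mathbb{E}\bigr[\exp(\theta \sum_{k} M_k)\bigr]\leq \exp(c\theta^2 T \sigma_w^2)$, so $\sum_k M_k$ is sub-Gaussian with norm $O(\sqrt{T}\sigma_w)$, and a Hoeffding bound then yields $\sum_k M_k = O(\sigma_w T)$ with probability at least $1-\delta/4$ when $T=\Omega(\log(1/\delta))$. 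Since $\mathbb{E}[\|w_k\|_2\mid\mathcal{F}_k]=O(\sigma_w)$ by \eqref{firstprop}, adding back the conditional means gives $\sum_k\|w_k\|_2 = O(\sigma_w T)$. Finally, $\|x_0\|_2$ is sub-Gaussian with norm $\sigma_w$, so $\|x_0\|_2 = O(\sigma_w\sqrt{\log(1/\delta)})$ with probability at least $1-\delta/2$ by \eqref{secondprop}, and this contributes an $O(\sigma_w/(1-\rho))$ term absorbed by the leading $O((\sigma_u+\sigma_w)T/(1-\rho))$. A union bound over the three events completes the proof with probability at least $1-\delta$.
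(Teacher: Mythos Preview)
Your proposal is correct and follows essentially the same approach as the paper: unroll the dynamics via the contraction property to get the pathwise bound $\|x_t\|_2 \leq C\rho^t\|x_0\|_2 + C\sum_{k<t}\rho^{t-k}(\|u_k\|_2+\|w_k\|_2)$, sum over $t$ to extract the $1/(1-\rho)$ factor, and then apply sub-Gaussian concentration for the input sum (via centering and Lemma~\ref{sumsub}) and for the disturbance sum (via the iterated conditional-MGF peeling). The only cosmetic differences are that the paper bundles $\|x_0\|_2$ into the disturbance MGF iteration rather than treating it separately, and centers by the unconditional rather than the conditional mean before peeling; your conditional-mean centering is arguably the cleaner way to justify the martingale-style step.
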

\begin{proof}
    Due to the inequality \eqref{Crhostable} in Assumption \ref{lipschitz}, we have
\begin{align*}
    \|x_t\|_2 &= \|f(x_{t-1}, u_{t-1}, w_{t-1})\|_2 = \cdots = \|f(f( \cdots f(f(x_0, u_0, w_0), u_1, w_1), \cdots), \cdots ),  u_{t-1}, w_{t-1})\|_2  \\&= \|f(f(f(x_0, u_0, w_0), u_1, w_1), \cdots),\cdots),  u_{t-2}, w_{t-2} ),  u_{t-1}, w_{t-1}) \\&\hspace{50mm}-f(f(f(0, 0, 0), 0, 0), \cdots),\cdots),  0, 0 ),  0, 0)\|_2 \\&\leq    \|f(f(f(x_0, u_0, w_0), u_1, w_1), \cdots),\cdots),  u_{t-2}, w_{t-2} ),  u_{t-1}, w_{t-1})\\&\hspace{50mm}-f(f(f(x_0, u_0, w_0), u_1, w_1), \cdots),\cdots),  u_{t-2}, w_{t-2} ),  0, 0)\|_2 \\ &\hspace{10mm}+ 
    \|f(f(f(x_0, u_0, w_0), u_1, w_1), \cdots),\cdots),  u_{t-2}, w_{t-2} ),  0, 0)\\&\hspace{60mm}-f(f(f(x_0, u_0, w_0), u_1, w_1), \cdots),\cdots),  0,0 ),  0, 0)\|_2  \\&\hspace{15mm}+\cdots \\&\hspace{20mm}+ 
    \|f(f(f(x_0, u_0, w_0), 0, 0), \cdots),\cdots),  0,0 ),  0, 0)\\&\hspace{70mm}-f(f(f(0, 0, 0), 0, 0), \cdots),\cdots),  0, 0 ),  0, 0)\|_2 \numberthis\label{67}
\end{align*}
where the equality in the second line comes from $f(0,0,0) = 0$ and the inequality is due to the triangle inequality. By Assumption \ref{lipschitz}, the terms in \eqref{67} are bounded by \begin{align*}&C\rho (\|u_{t-1}\|_2 + \|w_{t-1}\|_2), \quad C\rho^2 (\|u_{t-2}\|_2 + \|w_{t-2}\|_2),  \dots, \\&C\rho^{t-1} (\|u_{1}\|_2 + \|w_1\|_2), \quad C\rho^{t} (\|x_{0}\|_2+\|u_{0}\|_2 + \|w_0\|_2). \end{align*}
Thus, we have 
\begin{align*}
    \|x_t\|_2\leq C\rho^t \|x_0\|_2 +C\sum_{i=0}^{t-1} \rho^{t-i} (\|u_i\|_2+\|w_i\|_2). 
\end{align*}
Summing up for $t=0, \dots, T-\tau-1$ yields
\begin{align*}
    \sum_{t=0}^{T-\tau-1} \|x_t\|_2&\leq C
    \sum_{t=0}^{T-\tau-1}\rho^t \|x_0\|_2 +C\sum_{t=0}^{T-\tau-1}\sum_{i=0}^{t-1} \rho^{t-i} (\|u_i\|_2+\|w_i\|_2) \\& < C
    \sum_{t=0}^{\infty}\rho^t \|x_0\|_2 +C\sum_{t=0}^{\infty} \rho^{t}\sum_{i=0}^{T-\tau-2} (\|u_i\|_2+\|w_i\|_2) \\&= \frac{C}{1-\rho}\bigr[\|x_0\|_2 + \sum_{i=0}^{T-\tau-2} \|w_i\|_2+\sum_{i=0}^{T-\tau-2} \|u_i\|_2\bigr] \numberthis\label{termxwu}
\end{align*}
Consider that 
\begin{align*}
    &\mathbb{E}\Big[ \exp\Big(\theta \Big[\|x_0\|_2 - \mathbb{E}[\|x_0\|_2] + \sum_{i=0}^{T-\tau-2} \|w_i\|_2 - \mathbb{E}[\|w_i\|_2] \Big]\Big) \Big] \\
    &= \mathbb{E}\Big[\mathbb{E}\Big[\exp\Big(\theta \Big[\|x_0\|_2 - \mathbb{E}[\|x_0\|_2] + \sum_{i=0}^{T-\tau-2} \|w_i\|_2 - \mathbb{E}[\|w_i\|_2] \Big]\Big) \,\Big|\, \mathcal{F}_{T-\tau-2} \Big] \Big] \\
    &= \mathbb{E}\Big[ \mathbb{E}\Big[\exp\big(\theta (\|w_{T-2}\|_2 - \mathbb{E}[\|w_{T-2}\|_2])\big) \,\big|\, \mathcal{F}_{T-\tau-2} \Big] \\
    &\hspace{30mm} \times \exp\Big(\theta \Big[\|x_0\|_2 - \mathbb{E}[\|x_0\|_2] + \sum_{i=0}^{T-\tau-3} \|w_i\|_2 - \mathbb{E}[\|w_i\|_2] \Big]\Big) \Big] \\
    &\leq \exp(\theta^2 \cdot O(\sigma_w^2)) \cdot \mathbb{E}\Big[\exp\Big(\theta \Big[\|x_0\|_2 - \mathbb{E}[\|x_0\|_2] + \sum_{i=0}^{T-\tau-3} \|w_i\|_2 - \mathbb{E}[\|w_i\|_2] \Big]\Big)\Big] \\
    &\leq \cdots \leq \exp(\theta^2  \cdot O(T\sigma_w^2))
\end{align*}
for all $\theta\in \mathbb{R}$, where the inequalities come from applying Lemma \ref{centering} to Assumption \ref{subattacks}. Thus, the sub-Gaussian norm of $\|x_0\|_2 - \mathbb{E}[\|x_0\|_2] + \sum_{i=0}^{T-\tau-2} \|w_i\|_2 - \mathbb{E}[\|w_i\|_2]$ is $O(\sqrt{T} \sigma_w)$. 
Furthermore, since the sub-Gaussian norm of $\|u_i\|_2$ is $\sigma_u$ due to Assumption \ref{subinputs}, 
the sub-Gaussian norm of $\sum_{i=0}^{T-\tau-2} \|u_i\|_2 - \mathbb{E}[\sum_{i=0}^{T-\tau-2} \|u_i\|_2]$ is $O(\sqrt{T}\sigma_u)$ by applying Lemmas \ref{centering} and \ref{sumsub}. 

Denote the term in \eqref{termxwu} as $S_T$. 
Considering the aforementioned sub-Gaussian norms, the sub-Gaussian norm of $S_T-\mathbb{E}[S_T]$ is $O(\sqrt{T}\frac{\sigma_w + \sigma_u}{1-\rho})$ due to the triangle inequality and the homogeneity. Due to the property \eqref{plus}, one arrives at 
\begin{align*}
    \mathbb{P}\left(S_T - \mathbb{E}[S_T] \leq O\left(\frac{\sigma_w + \sigma_u}{1-\rho}\cdot T\right)\right) &\geq 1-\exp\left(-\Omega\left(\frac{T^2 (\sigma_w + \sigma_u)^2 / (1-\rho)^2}{T (\sigma_w + \sigma_u)^2 / (1-\rho)^2}\right)\right) \\&= 1-\exp(-\Omega(T))\numberthis\label{sst}
\end{align*}

We additionally have
\begin{align*}
    \mathbb{E}\left[S_T\right]&= \frac{C}{1-\rho}\bigr[\mathbb{E}[\|x_0\|_2] + \sum_{i=0}^{T-\tau-2} (\mathbb{E}[\|u_i\|_2]+\mathbb{E}[\|w_i\|_2])\bigr] \\&\leq \frac{C}{1-\rho} [O(T\sigma_w+T\sigma_u)] = O\left(\frac{\sigma_w + \sigma_u}{1-\rho}\cdot T\right),\numberthis\label{sst2}
\end{align*}
where the last inequality is obtained by applying the property \eqref{firstprop} to Assumptions \ref{subinputs} and \ref{subattacks}.  Combining \eqref{sst} and \eqref{sst2} yields 
\begin{align*}
    \mathbb{P}\left(S_T  \leq 2\cdot O\left(\frac{\sigma_w + \sigma_u}{1-\rho}\cdot T\right)\right) \geq  1-\exp(-\Omega(T))\geq 1-\delta \numberthis
\end{align*}
when $T=\Omega(\log(1/\delta))$. Recall from \eqref{termxwu} that $\sum_{t=0}^{T-\tau-1} \|x_t\|_2$ is bounded above by $S_T$. This completes the proof. 
\end{proof}

Now, we present our main theorem, which states that the estimation error to identify $G^*$ in the system \eqref{relationshipphi} is bounded by $O(\rho^\tau)$ when using the $\ell_2$-norm estimator. 
\begin{theorem}[Restatement of Theorem \ref{main}]\label{main2}
    Suppose that Assumptions \ref{lipschitz}, \ref{basis}, \ref{subinputs}, \ref{subattacks},  and \ref{attackprob} hold, and  that the approximation error vector satisfies $\|\bm{\epsilon}_t\|_2 \leq \bar \epsilon$ for all $t$. Consider  $\nu:= \frac{\sqrt{M\tau} L_\phi \sigma_u}{\lambda}$. Let $G^*$ be the true matrix governing the system \eqref{relationshipphi} and $\hat G_T$ denote a solution to the $\ell_2$-norm estimator given in \eqref{argmin}. Given $\delta\in(0,1]$, when 
\begin{align}\label{finaltime2}
     T=\Omega\left(\frac{\tau \nu^8}{(2(1-p)^\tau-1)^2}\left[rM \log\left(\frac{\tau \nu}{2(1-p)^\tau-1} \right) + \log\left(\frac{1}{\delta}\right)\right]\right),
 \end{align}
 we have 
 \begin{align}
    \|G^*-\hat G_T\|_F =  O\left(
    \left( \frac{\rho^\tau L}{\lambda} \cdot \frac{\sigma_u + \sigma_w}{1-\rho} 
    + \frac{\bar \epsilon}{\lambda} \right)
    \cdot \frac{\nu^3}{2(1-p)^\tau - 1} 
\right)
 \end{align}
 with probability at least $1-\delta$.
\end{theorem}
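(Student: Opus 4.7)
The plan follows the two-stage outline sketched in Section \ref{sec:3}. The first stage handles the idealized case $\bm{x}_t^{(\tau)} = \bm{\epsilon}_t = 0$ and establishes that $\sum_{t=\tau}^{T-1} \mathbb{I}_{\pm}\{\bm{W}_t^{(\tau)}=0\}\cdot \|Z\Phi(\bm{U}_t^{(\tau)})\|_2$ is uniformly positive over the unit Frobenius sphere $\mathbb{S}^{r\times M-1}$; by Theorem \ref{suffaux} this suffices for exact recovery. For a fixed $Z$, the expectation per time step is positive because Assumption \ref{attackprob} together with Lemma \ref{separation} gives $\mathbb{P}(\bm{W}_t^{(\tau)}=0)\ge (1-p)^\tau>1/2$, so the signed indicator contributes a positive bias $2(1-p)^\tau-1$. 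Combining the excitation inequality $\mathbb{E}\|Z\Phi(\bm{U}_t^{(\tau)})\|_2^2\ge \lambda^2$ from Assumption \ref{basis} with a sub-Gaussian third-moment bound via Cauchy--Schwarz (exactly as in \eqref{lambound}) yields $\mathbb{E}\|Z\Phi(\bm{U}_t^{(\tau)})\|_2 = \Omega(\lambda/\nu^3)$. To get around the fact that $\bm{W}_t^{(\tau)}$ depends on $\tau$ consecutive disturbances, I would replace $\{\bm{W}_t^{(\tau)}=0\}$ by the cleaner Bernoulli event $\{\xi_{t-1}=\cdots=\xi_{t-\tau}=0\}$, which is independent of the inputs and still has probability $(1-p)^\tau$; monotonicity of the indicator then transfers all subsequent bounds back to the original event.

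The second step upgrades this pointwise mean to a high-probability lower bound on the sum. Using Lipschitzness of $\Phi$ (Assumption \ref{basis}) together with Assumption \ref{subinputs}, the sub-Gaussian norm of each summand scales like $L_\phi\sqrt{M\tau}\sigma_u$, and a Hoeffding-type bound (with conditioning on the input $\sigma$-algebra as in \eqref{gettingnorm}) gives a sum concentration of order $\sqrt{T}L_\phi\sqrt{M\tau}\sigma_u$. A straightforward comparison of this deviation to the mean gives the per-$Z$ recovery time $T=\Omega(\tau\nu^8(2(1-p)^\tau-1)^{-2}\log(1/\delta))$. To promote pointwise positivity to uniform positivity over $\mathbb{S}^{r\times M-1}$, I would build an $\epsilon^*$-net with $\epsilon^*$ of order $(2(1-p)^\tau-1)/(\tau^{1/2}\nu^4)$, apply Lemma \ref{covering} to get $(1+2/\epsilon^*)^{rM}$ anchor points, and use Lemma \ref{difff} to bound the fluctuation of the summand between nearby $Z$'s. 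A union bound over the net then gives the sample size \eqref{finaltime2} and the uniform lower bound $\sum_{t=\tau}^{T-1}\mathbb{I}_\pm\{\bm{W}_t^{(\tau)}=0\}\cdot \|Z\Phi(\bm{U}_t^{(\tau)})\|_2 = \Omega(T\cdot (2(1-p)^\tau-1)\lambda/\nu^3)$.

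To close the proof in the general case, I would invoke optimality of $\hat G_T$ combined with the triangle inequality to arrive at \eqref{midmid}. The identity \eqref{homogen} rewrites the left-hand side as $\|G^*-\hat G_T\|_F$ times the uniform empirical quantity just lower-bounded, so the left side is at least $\|G^*-\hat G_T\|_F\cdot \Omega(T\cdot (2(1-p)^\tau-1)\lambda/\nu^3)$. For the right-hand side, Lemma \ref{separation} bounds $\|\bm{x}_t^{(\tau)}\|_2$ by $CL\rho^\tau\|x_{t-\tau}\|_2$, and Lemma \ref{normmax} bounds $\sum_{t=\tau}^{T-1}\|x_{t-\tau}\|_2$ by $O(T(\sigma_u+\sigma_w)/(1-\rho))$ with high probability; the approximation term contributes at most $\bar\epsilon T$. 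Dividing both sides by the $\Omega(T)$ coefficient and absorbing the constants $L$ and $C$ recovers precisely the bound stated in the theorem.

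The main obstacle is the uniform control in the first stage: the pointwise Hoeffding argument must be coupled with a covering-number argument on a continuous matrix sphere while keeping the constants sharp enough to yield a meaningful sample complexity. The choice of $\epsilon^*$ is delicate because it simultaneously appears inside the log term $\log(1+2/\epsilon^*)$ (contributing the $rM\log(\tau\nu/(2(1-p)^\tau-1))$ factor in \eqref{finaltime2}) and as the slack absorbed into the signal; any looser choice inflates $T$ or destroys the $\Omega(T)$ lower bound. A secondary subtlety is ensuring that the sub-Gaussian concentration bound for $\sum_t \|Z\Phi(\bm{U}_t^{(\tau)})\|_2 \cdot \Xi_t$ does not require $\bm{W}_t^{(\tau)}$ to be independent of the inputs --- this is what makes the reduction to the independent Bernoullis $\xi_t$ in Assumption \ref{attackprob} indispensable.
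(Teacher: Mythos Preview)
Your proposal is correct and follows essentially the same route as the paper's proof: the same reduction to the independent Bernoulli surrogates $\xi_t$, the same Cauchy--Schwarz/third-moment argument for the per-step mean, the same conditioning-based concentration leading to the $\epsilon^*$-net on $\mathbb{S}^{r\times M-1}$ of the exact order you specify, and the same closing step via \eqref{midmid}, \eqref{homogen}, and Lemmas \ref{separation} and \ref{normmax}. The only minor imprecision is the stated deviation scale $\sqrt{T}L_\phi\sqrt{M\tau}\sigma_u$: because the summands overlap in a window of length $\tau$, the rearrangement you cite from \eqref{gettingnorm} actually produces an extra $\sqrt{\tau}$, which is precisely what generates the leading $\tau$ in the sample complexity \eqref{finaltime2} you already wrote down.
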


\begin{proof}
The optimality of $\hat G_T$ to the $\ell_2$-norm estimator \eqref{argmin} for the system \eqref{relationshipphi} yields
\begin{align*}
    \hat G_T = \argmin_{G} \sum_{t=\tau}^{T-1}\left\|(G^* - G)\cdot \Phi(\bm{U}_t^{(\tau)} ) + \bm{W}_{t}^{(\tau)} + \bm{x}_{t}^{(\tau)}+\bm{\epsilon}_t\right\|_2,
\end{align*}
which implies that
\begin{align}
  & \sum_{t=\tau}^{T-1} \|(G^*-\hat G_T)\Phi(\bm{U}_t^{(\tau)}) + \bm{W}_{t}^{(\tau)} \|_2-\| \bm{x}_{t}^{(\tau)}+\bm{\epsilon}_t\|_2 \label{78}\\&\leq \sum_{t=\tau}^{T-1} \|(G^*-\hat G_T)\Phi(\bm{U}_t^{(\tau)}) + \bm{W}_{t}^{(\tau)} + \bm{x}_{t}^{(\tau)}+\bm{\epsilon}_t\|_2 \leq \sum_{t=\tau}^{T-1} \|\bm{W}_{t}^{(\tau)} + \bm{x}_{t}^{(\tau)}+\bm{\epsilon}_t\|_2 \numberthis\label{79}\\&   \leq \sum_{t=\tau}^{T-1} \|\bm{W}_{t}^{(\tau)}\|_2 + \|\bm{x}_{t}^{(\tau)}+\bm{\epsilon}_t\|_2,\label{80}
\end{align}
where \eqref{79} uses the optimality of $\hat G_T$ and the other inequalities are from the triangle inequality. By rearranging, we have 
\begin{align}\label{almost}
\sum_{t=\tau}^{T-1} \|(G^*-\hat G_T)\Phi(\bm{U}_t^{(\tau)}) + \bm{W}_{t}^{(\tau)}\|_2-\|\bm{W}_{t}^{(\tau)}\|_2 \leq 2\sum_{t=\tau}^{T-1} \|\bm{x}_{t}^{(\tau)}\|_2+ \|\bm{\epsilon}_{t}\|_2,
\end{align}
where the inequality is by \eqref{78} and \eqref{80}. Recall from Lemma \ref{separation} that $\|\bm{x}_t^{(\tau)}\|_2 \leq CL  \rho^\tau \|\bm{x}_{t-\tau}\|_2$. Then, we can establish that
\begin{align}\label{alalmost}
    2\sum_{t=\tau}^{T-1} \|\bm{x}_{t}^{(\tau)}\|_2  \leq 2\sum_{t=\tau}^{T-1}CL  \rho^\tau \|\bm{x}_{t-\tau}\|_2 = 2\sum_{t=0}^{T-\tau-1}CL  \rho^\tau \|\bm{x}_{t}\|_2.
\end{align}
Given the time \eqref{finaltime2}, the right-hand side of \eqref{almost} is upper bounded by 
\[ 2  \cdot O\left(\left(\frac{CL \rho^\tau(\sigma_u + \sigma_w)}{1-\rho}  + \bar \epsilon\right) T\right)  \]
 with probability at least $1-\frac{\delta}{2}$, which follows from Lemma \ref{normmax} and $\|\bm{\epsilon}_t\|_2\leq \bar \epsilon$.

We now aim to lower bound the left-hand side of \eqref{almost} given the time \eqref{finaltime2}. Inspired by \eqref{homogen}, we have 
\begin{align*}
   \sum_{t=\tau}^{T-1} \|(G^*-\hat G_T)\Phi(\bm{U}_t^{(\tau)}) &+ \bm{W}_{t}^{(\tau)}\|_2-\|\bm{W}_{t}^{(\tau)}\|_2\geq  \sum_{t=\tau}^{T-1} \mathbb{I}_{\pm}\{\bm{W}_t^{(\tau)}=0\} \cdot \bigr\|(G^*-\hat G_T) \cdot  \Phi(\bm{U}_t^{(\tau)})\bigr\|_2\\& = \|G^*-\hat G_T\|_F \cdot \mathbb{I}_{\pm}\{\bm{W}_t^{(\tau)}=0\} \cdot \Biggr\|\frac{G^*-\hat G_T}{\|G^*-\hat G_T\|_F} \cdot  \Phi(\bm{U}_t^{(\tau)})\Biggr\|_2 \\&=\|G^*-\hat G_T\|_F\cdot \Omega\left(\frac{ (2(1-p)^\tau-1) \cdot \lambda T}{4\nu^3} \right)
\end{align*}
where the first equality comes from the homogeneity of the $\ell_2$-norm, and the second equality holds for any $G^*-\hat G_T$ with probability at least $1-\frac{\delta}{2}$ due to Theorem \ref{b9}. 

Thus, with probability at least $1-\delta$, we have
\[
\|G^*-\hat G_T\|_F\cdot \Omega\left(\frac{ (2(1-p)^\tau-1) \cdot \lambda T}{4\nu^3} \right)\leq 2  \cdot O\left(\left(\frac{CL \rho^\tau(\sigma_u + \sigma_w)}{1-\rho}  + \bar \epsilon\right) T\right),
\]
which can be rearranged to
\begin{align*}
\|G^*-\hat G_T\|_F =  O\left(\left(\frac{\rho^\tau L (\sigma_u + \sigma_w)}{1-\rho} + \bar \epsilon \right)\cdot \frac{\nu^3}{(2(1-p)^\tau-1)\lambda}\right).
\end{align*}
This completes the proof.
\end{proof}

\section{Proof of Theorem \ref{lowerboundmain}}\label{proofd}

\begin{proof}
Let $M_T$ denote the maximum consecutive attack-free time length during $t=0,\dots, T-1$ under the attack probability $\frac{1}{2\tau+1}$, which satisfies Assumption \ref{attackprob}. Then, due to the union bound, we have
\begin{align}
    \mathbb{P}(M_T \geq l) \leq \sum_{t=0}^{T-1}\mathbb{P}(\text{no attack occurs from time $t$ to $t+l$})=\sum_{t=0}^{T-1} \left(1-\frac{1}{2\tau+1}\right)^l.
\end{align}
For the right-hand side to be less than $\delta$, we have 
\begin{align*}
    T\left(1-\frac{1}{2\tau+1}\right)^l<\delta \quad \Longleftrightarrow \quad l\geq \frac{\log(T/\delta)}{-
    \log \bigr(1-\frac{1}{2\tau+1}\bigr)}.
\end{align*}
Since we have $-\log(1-x) = x+ \frac{x^2}{2}+\frac{x^3}{3}+ \cdots \leq x+x^2 + x^3 + \cdots = \frac{x}{1-x}<2x$ for $|x| < \frac{1}{2}$, it follows that 
\begin{align*}
    l\geq  \frac{\log\left(T/\delta\right)}{\frac{2}{2\tau+1}} \geq \tau \log\left(\frac{T}{\delta}\right).
\end{align*}
Thus, we arrive at
\begin{align}\label{maxcons}
    \mathbb{P}\left(M_T <\tau \log\left(\frac{T}{\delta}\right)\right) \geq 1-\delta
\end{align}

 Now, consider the following functions $f, g:\mathbb{R}\to\mathbb{R}$:
   \begin{align}
       f(x,u,w) = \rho(x+u+w), \quad g(x,u) = L(x+u),
   \end{align}
   which satisfies Assumption \ref{lipschitz}. Then, as in \eqref{expansion}, the observation $y_t$ can be written as 
   \begin{align}\label{ytexp}
\nonumber y_t &= 
g(f(\cdots f(f(x_{t-\tau}, u_{t-\tau}, w_{t-\tau}), u_{t-\tau+1}, w_{t-\tau+1}), \cdots, u_{t-1}, w_{t-1}), u_t) \\&= L(\rho(\cdots\rho(\rho(x_{t-\tau} + u_{t-\tau} + w_{t-\tau})+u_{t-\tau+1} + w_{t-\tau+1}) \cdots+u_{t-1}+w_{t-1})+u_t)
   \end{align}

   Suppose that control inputs $u_t$ are chosen independently from $\{-1, 1\}$ with equal probability for all $t=0, \dots, T-1$, which satisfies Assumption \ref{subinputs}.
   Given  a finite $\sigma_w = \bigr(\frac{1}{\rho}\bigr)^{\Omega(\tau \log (T/\delta))}$, start the system with $x_0 = \sigma_w$ and let the disturbance $w_t$ also be $\sigma_w$ whenever the attack occurs at each time $t$, which satisfies Assumption \ref{subattacks}.  Note that the dynamics $f$ shrinks the system by a factor of $\rho$.  
   Then, considering \eqref{maxcons}, one can ensure that probabilistic adversarial attacks yield $x_t \geq 1$ for all $t = 0, \dots,  T-1$ with probability at least $1-\delta$. In this case, we can also rewrite \eqref{ytexp} as:
   \begin{align}\label{ytalter}
 y_t = L(\rho(\cdots\rho(\beta(\rho(x_{t-\tau} + u_{t-\tau} + w_{t-\tau}))+u_{t-\tau+1} + w_{t-\tau+1}) \cdots+u_{t-1}+w_{t-1})+u_t),
   \end{align}
   where
   \begin{align}
       \beta(x) = \begin{dcases*}
\frac{\tanh(x)}{\tanh(1)}, & if $-1\leq x \leq 1$, \\
x, & otherwise,
\end{dcases*}
   \end{align}
which is a Lipschitz continuous function. The expressions in \eqref{ytexp} and \eqref{ytalter} have exactly the same function value since $\rho(x_{t-\tau} + u_{t-\tau} + w_{t-\tau}) = x_{t-\tau+1} \geq 1$ under probabilistic adversarial attacks. In other words, one cannot distinguish between the two expressions \eqref{ytexp} and \eqref{ytalter}. For each expression, the natural input-output mapping as in \eqref{expansion2} would be 
\begin{subequations}\label{twodiff}
    \begin{align}
    &L(\rho(\cdots\rho(\rho(u_{t-\tau})+u_{t-\tau+1}) \cdots+u_{t-1})+u_t)\quad \text{and}\label{firstdiff} \\& L(\rho(\cdots\rho(\beta(\rho( u_{t-\tau} ))+u_{t-\tau+1} ) \cdots+u_{t-1})+u_t),\label{secdiff}
\end{align}
\end{subequations}
 respectively. Define the constant
\[
c:= \left|1-\frac{\tanh(\rho)}{\rho \tanh(1)}\right|, 
\]
where one has $0<c<1$ under $0<\rho<1$. 
Then, the absolute difference of \eqref{firstdiff} and \eqref{secdiff} is calculated as
\[
L\rho^{\tau-1} |\rho u_{t-\tau} - \beta(\rho u_{t-\tau})| = L\rho^{\tau-1} |\rho-\beta(\rho)|  = L\rho^\tau c,
\]
since $u_{t-\tau}$ is selected from $-1$ and $1$, and $\beta(x)$ is an odd function. Now, let the basis function be
\begin{align*}
    \Phi(\bm{U}_t^{(\tau)}) = \begin{bmatrix}
        L(\rho(\cdots\rho(\rho(u_{t-\tau})+u_{t-\tau+1}) \cdots+u_{t-1})+u_t) \\ L(\rho(\cdots\rho(\beta(\rho( u_{t-\tau} ))+u_{t-\tau+1} ) \cdots+u_{t-1})+u_t)
    \end{bmatrix},
\end{align*}
which consists of \eqref{firstdiff} and \eqref{secdiff}. This implies that the approximation error vector is designed to be $\bm{\epsilon}_t=0$.

Since the expression \eqref{firstdiff} is the input-output mapping of the true system \eqref{ytexp}, the true matrix $G^*$ in \eqref{relationshipphi} is $[1~~ 0]$. However, we again recall that under probabilistic adversarial attacks, any estimator cannot distinguish \eqref{ytalter} from \eqref{ytexp}, and may instead recover the input-output mapping of the alternative system \eqref{ytalter}, resulting in the estimate $\hat G_T=[0~~1]$. This always leads to an estimation error of $\sqrt{2}$. 

Now, it remains to calculate $\lambda$ in Assumption \ref{basis}. Let $\gamma$ denote the variable in \eqref{firstdiff}. Then, we have 
\begin{align*}
    &\mathbb{E}\left[\Phi(\bm{U}_t^{(\tau)})\Phi(\bm{U}_t^{(\tau)})^T\right] = \mathbb{E}\left[\mathbb{E}\left[\Phi(\bm{U}_t^{(\tau)})\Phi(\bm{U}_t^{(\tau)})^T\right]~\Bigr|~ u_{t-\tau}\right]  \\&\hspace{15mm}= \mathbb{E}\left[\frac{1}{2} \begin{bmatrix}
        \gamma^2 & \gamma (\gamma +L\rho^\tau c )\\ \gamma(\gamma+L\rho^\tau c) & (\gamma+L\rho^\tau c)^2
    \end{bmatrix}+\frac{1}{2} \begin{bmatrix}
        \gamma^2 & \gamma (\gamma -L\rho^\tau c )\\ \gamma(\gamma-L\rho^\tau c) & (\gamma-L\rho^\tau c)^2
    \end{bmatrix}\right] \\ & \hspace{15mm}= \mathbb{E}\left[\begin{bmatrix}
        \gamma^2 & \gamma^2 \\ \gamma^2 & \gamma^2 + (L\rho^\tau c)^2
    \end{bmatrix}\right]\numberthis\label{mineig}
\end{align*}
Note that $\mathbb{E}[\gamma^2] = L\sum_{i=0}^\tau \rho^i$ due to the independence of control inputs and the fact that $\mathbb{E}[u_t^2] = 1$ for all $t$.
Let $\mu_\text{min}$ denote the minimum eigenvalue of \eqref{mineig}. We have
\begin{align*}\mu_\text{min} &= \frac{\mathbb{E}[2\gamma^2] +  (L\rho^\tau c)^2 - \sqrt{\mathbb{E}[2\gamma^2]^2 +(L\rho^\tau c)^4 }}{2} = \frac{\mathbb{E}[2 \gamma^2]\cdot (L\rho^\tau c)^2}{\mathbb{E}[2\gamma^2] +  (L\rho^\tau c)^2 + \sqrt{\mathbb{E}[2\gamma^2]^2 +(L\rho^\tau c)^4 }}\\&\hspace{30mm}\geq \frac{2\mathbb{E}[ \gamma^2] \cdot (L\rho^\tau c)^2}{\mathbb{E}[2\gamma^2 ]+  (L\rho^\tau c)^2 + \mathbb{E}[2\gamma^2] +(L\rho^\tau c)^2 } = \frac{ \mathbb{E}[\gamma^2]\cdot  (L\rho^\tau c)^2}{\mathbb{E}[2\gamma^2] +  (L\rho^\tau c)^2} \\& \hspace{30mm}\geq \frac{\mathbb{E}[ \gamma^2]\cdot  (L\rho^\tau c)^2}{\mathbb{E}[2\gamma^2] +  \mathbb{E}[\gamma^2]} = \frac{(L\rho^\tau c)^2}{3},
\end{align*}
where the first inequality comes from $\mathbb{E}[2\gamma^2] +(L\rho^\tau c)^2 \geq \sqrt{\mathbb{E}[2\gamma^2]^2 +(L\rho^\tau c)^4 }$ and the second inequality is due to $\mathbb{E}[\gamma^2] > L\rho^\tau > L\rho^{\tau} c$. Thus, Assumption \ref{basis} is satisfied with $\lambda \geq \frac{L\rho^\tau c}{\sqrt{3}}$. In other words, the derived estimation error $\sqrt{2}$ is always lower-bounded by \[\frac{L\rho^\tau}{\lambda}\cdot \sqrt{\frac{2}{3}}c = \Omega\left(\frac{L\rho^\tau}{\lambda}\right),\]
which completes the proof.
\end{proof}

\section{Numerical Experiment Details}\label{sec:expdetails}

In this section, we will present experiment details on Section \ref{sec:4}. Apple M1 Chip with 8‑Core CPU is sufficient for the experiments. 
The error bars (shaded area) in all the figures in the paper report 95\% confidence intervals based on the standard error. We calculate the standard error by running 10 different experiments by generating 10 random sets of matrices $A, B, C$, and $D$ and using  random adversarial disturbances for each experiment.

We use the following parameters for the system \eqref{dynamicexperiment}: the state dimension $n=100$, the control input dimension $m=5$, the observation dimension $r=10$, and the time horizon $T=500$. 
For the function $\sigma$ that defines $f(x_t, u_t, w_t) = \sigma(Ax_t + Bu_t + w_t)$, we run the experiments with two different $\sigma$:
\begin{align}\label{twosigmas}
    \sigma(x) = \tanh(x)
    ~\quad \text{or} \quad~ \sigma(x) = \text{sgn}(x) \cdot \log(|x|+1).
\end{align}
Both functions are symmetric around the origin, monotonic, and $1$-Lipschitz, which are desirable for activation functions of a neural net. Note that the first function is bounded within $[-1,1]$, 
while the second function is unbounded. We analyze both options to determine whether the boundedness affects the behavior of the estimation error.

Based on random matrices $A\in\mathbb{R}^{100\times 100}$, $B\in\mathbb{R}^{100\times 5}$, $C\in\mathbb{R}^{10\times 100}$, and $D\in\mathbb{R}^{10\times 5}$ for each experiment, we build the true input-output mapping for different $\sigma$ options and approximate the mapping to be a linear combination of basis functions as:
\begin{align}\label{mapfunc}
\sigma(C\sigma(A \sigma(\cdots \sigma(A \sigma(Bu_{t-\tau}) + Bu_{t-\tau+1})\cdots )+Bu_{t-1})+Du_t) = G^*\cdot \Phi(\bm{U}_t^{(\tau)}). 
\end{align}
To this end, we use kernel regression to estimate the true $G^*$ and construct the kernels (basis functions) $\Phi$. The number of kernels used as basis functions is set to $M=25$. We leverage polynomial kernels of degree up to 3, and select the regularization parameter from $[0.0001, 0.001, 0.01, 0.1, 1, 10, 100]$ based on the one that minimizes the test mean-squared-error. The training and test datasets, split in an 80:20 ratio, are randomly generated from the control inputs whose entries are $\text{Unif}[-15,15]$ and the corresponding function values based on the left-hand side of \eqref{mapfunc}. 

Starting from the initial state $x_0=100 \mathbf{1}_{100}$, we generate the observation trajectory $y_0, \dots, y_{T-1}$. Here, $\mathbf{1}_{(\cdot)}$ is the vector of ones with a relevant dimension. 
Defining $x_t^i$ as the $i^\text{th}$ coordinate of $x_t$, when the system is under attack, the adversary selects each coordinate $w_t^i$ of the disturbance $w_t$ to be $\text{sgn}(x_t^i) \cdot \gamma$, where $\gamma \sim N(300, 25)$ if $x_t^i \geq 0$, and $\gamma \sim N(1000, 25)$ otherwise. The control inputs $u_t$ are selected as either one of the following:
\begin{align}\label{twoinputs}
u_t \sim N(0, 100I_5) ~\quad \text{or} \quad ~u_t \sim \text{Unif}[-8,10]^{5}.
\end{align}
The first is standard zero-mean Gaussian inputs, and the second is nonzero-mean non-Gaussian inputs. We show that both inputs work properly in our setting, in contrast to prior literature that requires zero-mean Gaussian inputs (see Table \ref{table1}). 

The observation trajectory $y_0, \dots, y_{T-1}$ generated by \eqref{dynamicexperiment} depends on the hyperparameters $\tau$ and $\rho$. The input memory length $\tau$ affects not only the complexity of \eqref{mapfunc} but also the attack probability $p$ at each time. We set $p=\frac{1}{2\tau + 1}$, which satisfies Assumption \ref{attackprob}. Moreover, note that $\rho$ is generated by adjusting the spectral radius of the matrix $A$. Since both $\sigma$ are $1$-Lipschitz functions,  $\rho$ in Assumption \ref{lipschitz} coincides with the spectral radius of $A$ (see Remark \ref{linearremark}).

Our first experiment compares the $\ell_2$-norm estimator with the commonly used least-squares under $\tau=5$ and $\rho=0.5$. We consider both cases of control inputs given in \eqref{twoinputs}.
Based on the observation trajectory, we evaluate the following two estimators using the MOSEK solver \citep{mosek2025}:
\[
\argmin_{G} \sum_{t=\tau}^{T-1}\left\|y_t - G\cdot \Phi(\bm{U}_t^{(\tau)})\right\|_2 \quad \text{vs.} \quad \argmin_{G} \sum_{t=\tau}^{T-1}\left\|y_t - G\cdot \Phi(\bm{U}_t^{(\tau)})\right\|_2^2.
\]

Our second experiment additionally compares the $\ell_2$-norm estimator with the $\ell_1$-norm estimator  and the $\ell_\infty$-norm estimator:
\[
\argmin_{G} \sum_{t=\tau}^{T-1}\left\|y_t - G\cdot \Phi(\bm{U}_t^{(\tau)})\right\|_1 \quad \text{and} \quad \argmin_{G} \sum_{t=\tau}^{T-1}\left\|y_t - G\cdot \Phi(\bm{U}_t^{(\tau)})\right\|_\infty.
\]

Our third experiment analyzes the effect of $\tau$ and $\rho$ on the $\ell_2$-norm estimator under nonzero-mean uniform inputs, where we consider $\tau\in[5,10]$ and $\rho\in[0.25, 0.5]$. 
Note that all experiments were conducted for both activation functions  in \eqref{twosigmas}.

\begin{figure}[t]
    \centering
    \begin{subfigure}[b]{0.32\textwidth}
        \centering
        \includegraphics[height=126pt]{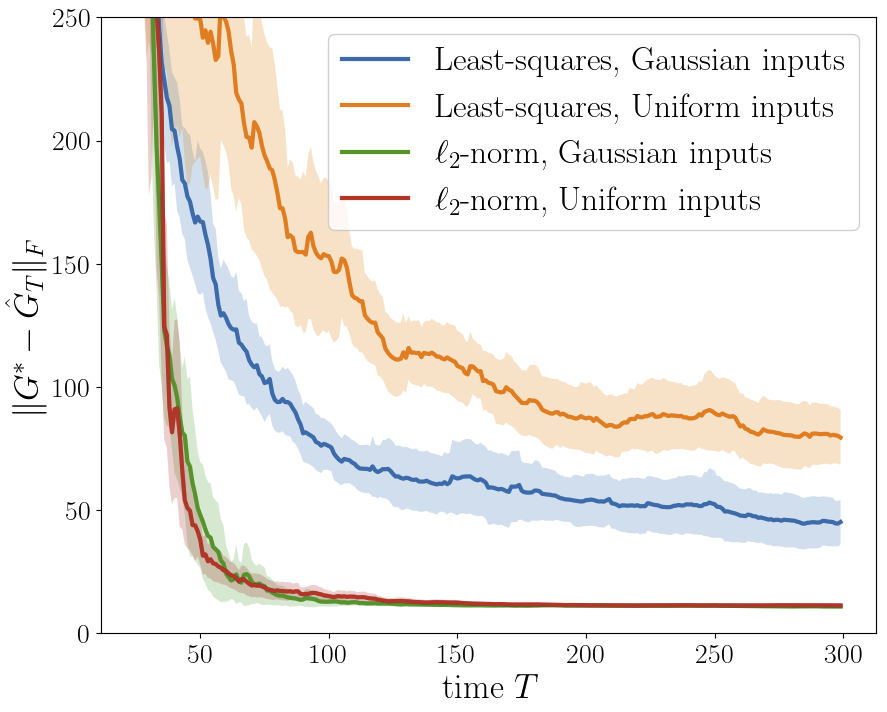}
        \caption{Comparison of the Least-squares and the $\ell_2$-norm estimator}
        \label{fig:leastl12}
    \end{subfigure}
    \hfill
    \begin{subfigure}[b]{0.32\textwidth}
        \centering
        \includegraphics[height=125pt]{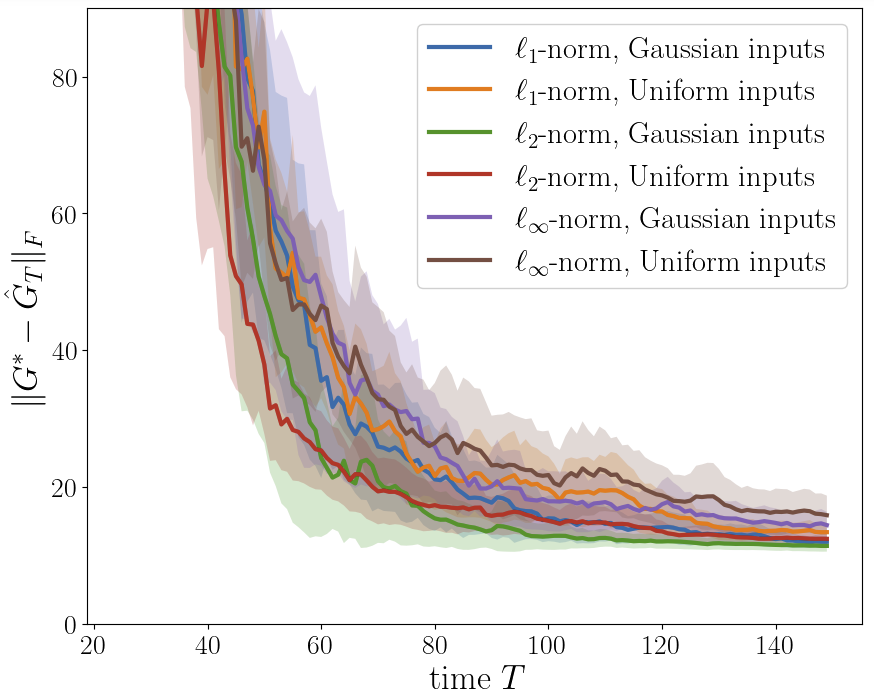}
        \caption{Comparison of the $\ell_\alpha$-norm estimators ($\alpha=1,2,\infty$)}
        \label{fig:l12inf2}
    \end{subfigure}
    \hfill
    \begin{subfigure}[b]{0.32\textwidth}
        \centering
        \includegraphics[height=126pt]{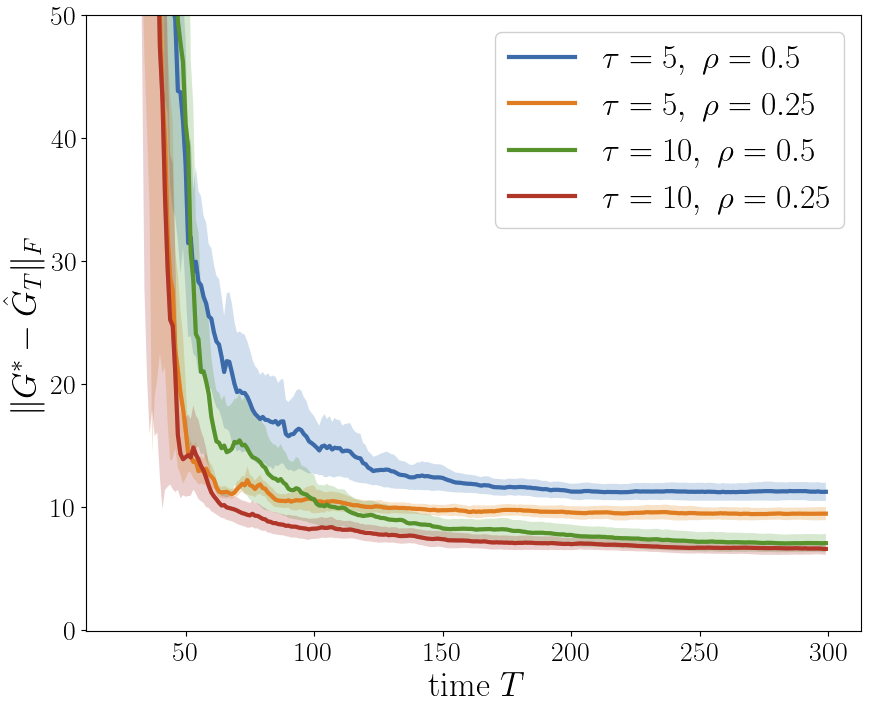}
        \caption{Analysis of $\tau$ and $\rho$ effects under non-Gaussian inputs}
        \label{fig:Uniform2}
    \end{subfigure}
    
    \caption{Estimation error of the input-output mapping \eqref{expmapping} under probabilistic adversarial attacks under the activation function $\text{sgn}(x) \cdot \log(|x|+1)$.}
    \label{fig:experiments2}
\end{figure}

\begin{figure}[t]
    \centering
    \begin{subfigure}[b]{0.49\textwidth}
        \centering
        \includegraphics[height=132pt]{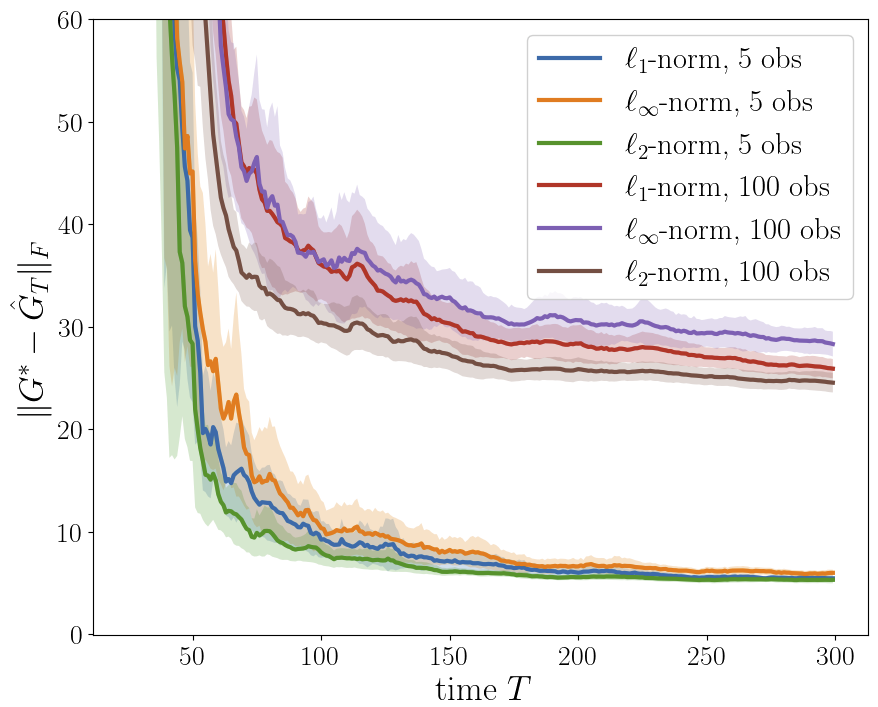}
        \caption{Activation function $\tanh(x)$}
        \label{fig:tanh_r}
    \end{subfigure}
    \hfill
    \begin{subfigure}[b]{0.49\textwidth}
        \centering
        \includegraphics[height=132pt]{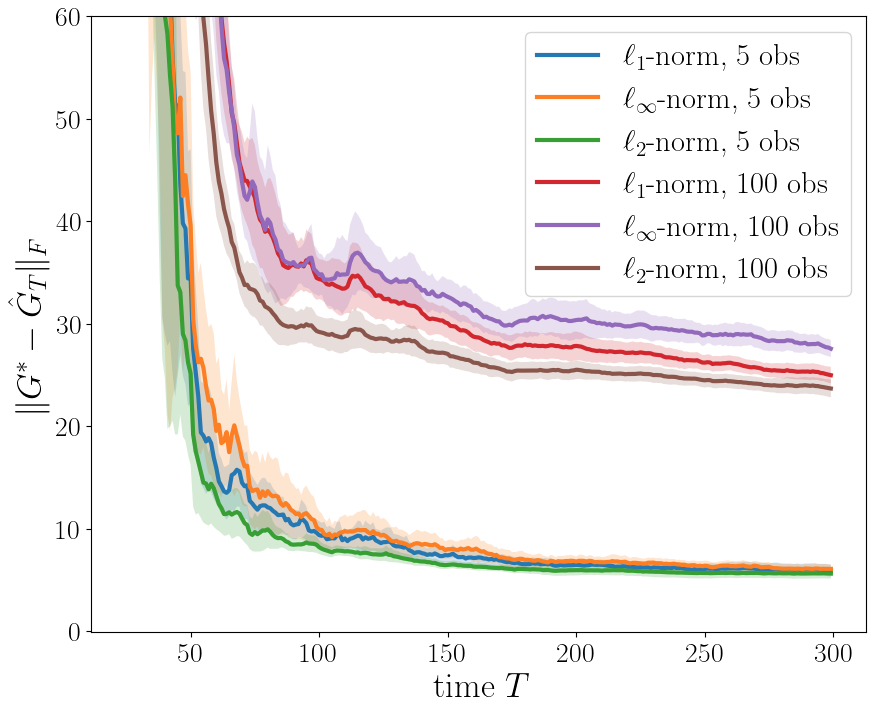}
        \caption{Activation function $\text{sgn}(x)\cdot \log(|x|+1)$}
        \label{fig:log_r}
    \end{subfigure}
    
    \caption{Estimation error of the input-output mapping \eqref{expmapping} under probabilistic adversarial attacks with the two activation functions to analyze how the observation dimension $r$ affects $\ell_\alpha$-norm estimators ($\alpha = 1,2,\infty$).}
    \label{fig:experiments3}
\end{figure}

The experiments using $\sigma(x) = \tanh(x)$  are shown in Figure \ref{fig:experiments} and those with $\sigma(x) = \text{sgn}(x) \cdot \log(|x|+1)$ are presented in Figure \ref{fig:experiments2}. As noted earlier, the two functions differ in their boundedness.  In both Figures \ref{fig:leastl1} and \ref{fig:leastl12}, one can observe that the $\ell_2$-norm estimator accommodates both Gaussian and uniform inputs and arrive at a similar stable region, unlike the least-squares estimator. 

Furthermore, both Figures \ref{fig:12inf} and \ref{fig:l12inf2} demonstrate that all norm estimators accurately recover the true matrix $G^*$, with the $\ell_2$-norm estimator achieving the smallest error among them. This supports the findings in Remark \ref{12infty}, which states that only the $\ell_2$-norm estimator attains the optimal error \eqref{upperbound1} that matches the lower bound presented in Theorem \ref{lowerboundmain}.

The discrepancy in the estimation error with respect to $\tau$ and $\rho$ can clearly be observed in Figures  \ref{fig:Uniform} and \ref{fig:Uniform2}, where the estimation error using the $\ell_2$-norm estimator decreases as $\tau$ increases and $\rho$ decreases, which is consistent with our theoretical optimal error of $O(\rho^\tau)$. 
These findings remain valid regardless of the boundedness of the activation function $\sigma$.

The last experiment given in Figure \ref{fig:experiments3} shows how the observation dimension $r$ may affect the performance of the estimators. In addition to the second experiment comparing the $\ell_2$-norm estimator with the $\ell_1$-norm and $\ell_\infty$-norm estimators, we also test the effect of $r$. To be specific, we compare the partially observed case with the fully observed case; using the observation dimension $r=5 \text{~or~} 100$. Note that $r=5$ represents the partially observed case, whereas $r=100$ represents the fully observed case since $r=n$. Accordingly, the dimension of relevant matrices will be $C\in \mathbb{R}^{r\times 100}$ and $D\in\mathbb{R}^{r\times 5}$.

The results presented in both Figures \ref{fig:tanh_r} and \ref{fig:log_r} show that increasing $r$ leads to higher estimation error for all three estimators ($\ell_2$, $\ell_1$, and $\ell_\infty$), which agrees with the theoretical results for the $\ell_1$-norm and $\ell_\infty$-norm estimators, while the trend for the $\ell_2$-norm is somewhat milder (see Remark \ref{12infty}). Although not perfectly consistent with the analysis that the $\ell_2$-norm estimator may not suffer from increasing $r$, the $\ell_2$-norm estimator remains the least susceptible among the three and continues to achieve the lowest estimation error for a large value of $r$—fully observed case.

\section{Numerical Experiment on Power Systems}\label{sec:power}

The core assumption of this paper is Assumption \ref{attackprob}, which states that attack times are sparse, yet the adversary can exploit the full information history at each attack instance.
In this section, we illustrate how our setting applies to the real-world systems, and aim to identify the input-output mapping of nonlinear swing dynamics in power grids, where control inputs are  mechanical power injections to each node and outputs are rotor angles and speeds measured at a limited number of nodes. The system operates under probabilistic adversarial attacks and partial observability.  Table \ref{notationtable} presents the symbols related to the power grids.

\begin{table}[h]
\centering
\caption{Notations for Power Grids}
\begin{tabular}{cc}
\hline
\textbf{Symbol} & \textbf{Description} \\ \hline
$M_i$ & Inertia constant \\
$D_i$ & Damping coefficient \\
$|E_i|$ & Magnitude of the internal voltage of the generator $i$ \\
$B_{ij}$ & Susceptance between nodes $i,j$\\
$u_i$ & Mechanical power injection to generator $i$ \\
$w_i$ & Probabilistic adversarial attack applied to generator $i$ \\
$\delta_i$ & Rotor angle of generator $i$\\
$\dot\delta_i$ & Rotor speed of generator $i$\\
\hline
\end{tabular}
\label{notationtable}
\end{table}

Given the dynamics \eqref{swing}, the goal of the control is to settle every generator's rotor speed $\dot \delta_i$ to the nominal grid frequency (e.g. 60Hz in the US); synchronization to turn a collection of individual rotating machines into a single, coherent power-delivery system. Each machine in the grid is dynamically coupled to every other machine, since when you speed up one generator, then the extra power is also applied to every other generator, and then they shift their angles to propagate that power into the rest of the loop, aiming to settle to new angles and back to 60Hz. 

The system generally remains robust to standard operational noise, but if undetected disturbances slip through, the system may become destabilized. To this end, we need system identification under adversarial attack, where each attack can have an extremely large magnitude. 
The goal is to identify the input-output mapping of the power grid even in the presence of probabilistic adversarial attacks, where an adversary gains access to the mechanical power injection channel and can inject a malicious perturbation. When a stealthy attack is injected, it will propagate across all grids, causing growing oscillation. 

We approximate the dynamics \eqref{swing} to a discrete-time dynamics with approximating $\sin(\delta_i-\delta_j) \approx \delta_i - \delta_j.$
With a sampling time of $t_s=0.001$, the approximation of $\sin$ and $\cos$ functions are justified. For time $t$, we use the notation $u_i(t)$, $w_i(t)$ for the control input and disturbance applied to the generator $i$ at time $t$, respectively, and similar for the other notations.
\begin{align*}
    &\delta_i(t+1) =\delta_i(t) + t_s\cdot \nu_i(t), \\ & \nu_i(t+1) = \nu_i(t) + t_s\cdot \frac{1}{M_i}\left[  h(u_i(t),w_i(t)) - \sum_{j=1}^n |E_i||E_j|B_{ij} (\delta_i(t) - \delta_j(t))-D_i \nu_i(t)\right],
\end{align*}
where $\nu_i= \dot \delta_i$. 
The relevant system can thus be represented as:
\begin{align}\label{blockmatrix}
    \begin{bmatrix}
        \delta_1(t+1) \\ \vdots \\\delta_n(t+1) \\ \nu_1(t+1) \\ \vdots \\ \nu_n(t+1)
    \end{bmatrix} = \begin{bmatrix}
        &&&&&&\\
        &&I&&t_s I && \\
        &&&&&&\\
        &&K&& I-t_s M^{-1}D&& \\
        &&&&&&
    \end{bmatrix}\begin{bmatrix}
        \delta_1(t) \\ \vdots \\\delta_n(t) \\ \nu_1(t) \\ \vdots \\ \nu_n(t)
    \end{bmatrix} 
    + H(t),
\end{align}
where $M=\text{diag}(M_1, \dots, M_n)$, $D=\text{diag}(D_1, \dots, D_n)$, and
$K\in\mathbb{R}^{n\times n}$ has entries of $K_{ii}=-\frac{t_s|E_i|}{M_i}\sum_{\substack{j=1,  \\ j\neq i}}^n|E_j|B_{ij}$ and $K_{ij}=\frac{t_s}{M_i}|E_i||E_j|B_{ij}$   for $i\neq j$. Finally, $H(t) = [0, ~ \cdots , ~ 0 , ~~\frac{t_s}{M_1} h(u_1(t), w_1(t)),  ~ \cdots,  ~ \frac{t_s}{M_n} h(u_n(t), w_n(t)) ]^T$. 

Alternatively define $\tilde H(t) = [0, ~ \cdots , ~ 0 , ~~\frac{t_s}{M_1} h(u_1(t),0),  ~ \cdots,  ~ \frac{t_s}{M_n} h(u_n(t), 0) ]^T,$ and let $A=\begin{bmatrix}
    I&t_sI \\ B& I-t_sM^{-1}D
\end{bmatrix}$. Recursively applying the  system \eqref{blockmatrix} leads to
\begin{align*}
    \begin{bmatrix}
        \delta_1(t) \\ \vdots \\\delta_n(t) \\ \nu_1(t) \\ \vdots \\ \nu_n(t)
    \end{bmatrix} = [I ~A ~ A^2 ~ \cdots ~ A^{\tau-1}] \begin{bmatrix}
        H(t-1) \\ \vdots \\ H(t-\tau)
    \end{bmatrix} + A^\tau \begin{bmatrix}
        \delta_1(t-\tau) \\ \vdots \\\delta_n(t-\tau) \\ \nu_1(t-\tau) \\ \vdots \\ \nu_n(t-\tau)
    \end{bmatrix}
\end{align*}

We assume that we can only observe the first $r<n$ generators; \textit{i.e.}, $\delta_1, \cdots \delta_r, \nu_1,\dots, \nu_r$. Thus, the goal is to retrieve 1st to $r$th row, $(n+1)$th to $(n+r)$th row, given $\tilde H(t-1), \cdots, \tilde H(t-\tau)$, where each $\tilde H$ is a unattacked version of $H$ and we know the structure of $h(\cdot, 0)$. 
We design the attack vector $w_i(t)$ to leverage the information of the control input $u_i(t)$. At attack times, the adversary selects
\[
w_i(t) = \frac{100}{1+e^{-10000u_i(t)}}|\sin(100t-200)|,
\]
which yields a large positive value when $u_i(t)>0$, and a large negative value otherwise.


To identify the input-output mapping $[I ~A ~ A^2 ~ \cdots ~ A^{\tau-1}]$, we now compare two methods: the $\ell_2$-norm estimator and the least-squares method. Each estimator can  be respectively written as  \begin{align*}
    \argmin_{G\in \mathbb{R}^{2r \times 2n\tau}} \sum_{t=\tau}^{T+\tau-1} \left\| \begin{bmatrix}
        \delta_1(t) \\ \vdots \\\delta_r(t) \\ \nu_1(t) \\ \vdots \\ \nu_r(t)
    \end{bmatrix} - G \begin{bmatrix}
          \tilde H(t-1) \\ \vdots \\ \tilde H(t-\tau)
    \end{bmatrix}\right\|_2, \quad  \argmin_{G\in \mathbb{R}^{2r \times 2n\tau}} \sum_{t=\tau}^{T+\tau-1} \left\| \begin{bmatrix}
        \delta_1(t) \\ \vdots \\\delta_r(t) \\ \nu_1(t) \\ \vdots \\ \nu_r(t)
    \end{bmatrix} - G \begin{bmatrix}
          \tilde H(t-1) \\ \vdots \\ \tilde H(t-\tau)
    \end{bmatrix}\right\|_2^2.
\end{align*}. 


Figure \ref{fig:power} explains why the $\ell_2$-norm estimator ultimately performs well under Assumption \ref{attackprob}: sparse attack times, yet each attack is arbitrarily designed with access to $\mathcal{F}_t$ at attack times $t$.


\end{document}